\documentclass[12pt,twoside]{amsart}
\usepackage[latin1]{inputenc}

\usepackage{latexsym}
\usepackage{exscale}

\usepackage{amsmath,amsfonts,amssymb,amsthm,fullpage,bbm}

\usepackage{fancyhdr,graphicx,color}

\numberwithin{equation}{section}

\newtheorem{thm}{Theorem}[section]

\newtheorem{lem}[thm]{Lemma}

\newtheorem{cor}[thm]{Corollary}

\newtheorem{prop}[thm]{Proposition}

\newtheorem{rem}[thm]{Remark}

\newtheorem{dfn}[thm]{Definition}
\newcommand{\diam}{diam}
\newcommand{\aver}[1]{-\hskip-0.46cm\int_{#1}}

\newcommand{\ind}{1\hspace{-2.5 mm}{1}}

\topmargin=0pt

\textheight=660pt

\textwidth=436pt

\oddsidemargin=10pt

\evensidemargin=10pt
\DeclareMathOperator{\supp}{supp}
\DeclareMathOperator{\Lip}{Lip}
\makeatletter
\renewcommand\@biblabel[1]{#1.}
\makeatother
\begin{document}
\allowdisplaybreaks
\title{Real interpolation of Sobolev spaces}
\author{Nadine BADR}
\address{N.Badr
\\
Universit\'e de Paris-Sud, UMR du CNRS 8628
\\
91405 Orsay Cedex, France} \email{nadine.badr@math.u-psud.fr}

\subjclass[2000]{46B70, 46M35}
\keywords{Interpolation; Sobolev spaces; Poincaré inequality; Doubling  property; Riemannian
manifolds; Metric-measure spaces.}
\begin{abstract}
We prove that $W^{1}_{p}$ is an interpolation space between $W^{1}_{p_{1}}$ and $W^{1}_{p_{2}}$  for $p>q_{0}$ and $1\leq p_{1}<p<p_{2}\leq \infty$  on some classes of manifolds and general metric spaces, where $q_{0}$ depends on our hypotheses. 
\end{abstract}
\maketitle
\tableofcontents

\section{Introduction}
Do the Sobolev spaces $W^{1}_{p}$ form a  real interpolation scale for $1<p <\infty$?
The aim of the present work is to provide a positive answer for Sobolev spaces on some metric spaces.
Let us state here our main theorems for non-homogeneous Sobolev spaces (resp. homogeneous Sobolev spaces) on Riemannian manifolds. 
\begin{thm}\label{IS}
Let $M$ be a complete non-compact Riemannian manifold satisfying the local doubling property $(D_{loc})$ and a local Poincar\'{e} inequality $(P_{qloc})$, for some $1\leq q <\infty$. Then for $1\leq r \leq q <p <\infty$, $W_{p}^{1}$ is a real interpolation space between $W_{r}^{1}$ and  $W_{\infty}^{1}$.
\end{thm}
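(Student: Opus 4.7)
The plan is to apply the real $K$-method of interpolation with parameter $\theta = 1 - r/p \in (0,1)$, so that $1/p = (1-\theta)/r + \theta/\infty$; the target identification is $(W^1_r, W^1_\infty)_{\theta,p} = W^1_p$ with equivalent norms.

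The inclusion $(W^1_r, W^1_\infty)_{\theta,p} \hookrightarrow W^1_p$ is essentially formal: any admissible decomposition $f = g + b$ of a Sobolev function simultaneously produces $\nabla f = \nabla g + \nabla b$, hence $K(\nabla f, t; L^r, L^\infty) \leq K(f, t; W^1_r, W^1_\infty)$ and similarly for $f$ itself. The classical identity $(L^r, L^\infty)_{\theta,p} = L^p$ then yields the estimate immediately.

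The substantive direction $W^1_p \hookrightarrow (W^1_r, W^1_\infty)_{\theta,p}$ requires an upper bound on the $K$-functional. I would produce, for each $f \in W^1_p$ and each $\alpha > 0$, a Calder\'on--Zygmund type decomposition $f = g_\alpha + b_\alpha$ satisfying
$$\|g_\alpha\|_{W^1_\infty} \lesssim \alpha, \qquad \|b_\alpha\|_{W^1_r}^r \lesssim \int_{\Omega_\alpha} (|f|^r + |\nabla f|^r),$$
where $\Omega_\alpha$ is the super-level set $\{\mathcal{M}_{\mathrm{loc}}(|f|^q + |\nabla f|^q) > \alpha^q\}$ for a suitable localized Hardy--Littlewood maximal operator. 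The construction starts from a Whitney-type cover $\{B_i\}$ of $\Omega_\alpha$ by balls of radii comparable to the distance to $\Omega_\alpha^c$, truncated so as to remain within the ``local'' scale on which $(D_{loc})$ and $(P_{qloc})$ apply, together with a Lipschitz partition of unity $\{\chi_i\}$ with $\|\nabla \chi_i\|_\infty \lesssim 1/r_i$. Setting $b_\alpha = \sum_i (f - f_{B_i})\chi_i$ and $g_\alpha = f - b_\alpha$, the Poincar\'e inequality $(P_{qloc})$ controls $\|(f - f_{B_i})\nabla \chi_i\|_r$ and $\|\chi_i \nabla f\|_r$ on each $B_i$, while finite overlap and the weak-type $L^{p/q}$ bound on $\mathcal{M}_{\mathrm{loc}}$ (coming from $(D_{loc})$ and using $p > q$) yield the estimate on $b_\alpha$. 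For $g_\alpha$, a Haj\l asz-type pointwise inequality obtained by telescoping $(P_{qloc})$ along dyadic chains of balls shows that $f$ is Lipschitz of order $\alpha$ on $\Omega_\alpha^c$, and the explicit gluing formula furnishes Lipschitz control on $\Omega_\alpha$.

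With the decomposition at hand, the bound $K(f, t; W^1_r, W^1_\infty) \leq \|b_\alpha\|_{W^1_r} + t\|g_\alpha\|_{W^1_\infty}$, optimized in $\alpha = \alpha(t)$, reduces the problem to the $L^p$-estimate of $\mathcal{M}_{\mathrm{loc}}(|f|^q + |\nabla f|^q)^{1/q}$, which is controlled by $\|f\|_{W^1_p}$ precisely when $p > q$. Integration against $t^{-\theta p}\,dt/t$ then completes the argument. The principal obstacle is the Calder\'on--Zygmund decomposition itself, and in particular the verification that $g_\alpha$ is globally Lipschitz of order $\alpha$: this step couples $(D_{loc})$, $(P_{qloc})$ and the geometry of the Whitney cover, and is also where the threshold $p > q$ (rather than $p > 1$) enters.
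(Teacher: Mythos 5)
Your broad strategy---lower bound formally from $(L^r,L^\infty)_{\theta,p}=L^p$, upper bound via a Calder\'on--Zygmund decomposition of Sobolev type driven by a localized maximal function---is exactly the strategy of the paper (Proposition \ref{CZ} and Theorem \ref{EK}, then the norm identification of Section \ref{TI}). In the \emph{global} setting of $(D)$, $(P_q)$ your outline is essentially the same as Proposition \ref{CZ} and would go through. But Theorem \ref{IS} is a \emph{local} statement, assuming only $(D_{loc})$, $(P_{qloc})$, and there your construction has a genuine gap at precisely the point you flag as the ``principal obstacle.''

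The difficulty is the step ``Whitney cover of $\Omega_\alpha$, truncated so as to remain within the local scale.'' Once you cap the radii at $r_0$, a Whitney ball $B_i$ whose center lies at distance $\gg r_0$ from $F=\Omega_\alpha^c$ no longer has the property that a fixed dilate $\overline{B_i}$ meets $F$. That property is not cosmetic: it is the only way to deduce
$$\aver{\overline{B_i}}(|f|+|\nabla f|)^q\,d\mu \le \alpha^q,\qquad\text{hence}\qquad \int_{B_i}(|f|^q+|\nabla f|^q)\,d\mu \lesssim \alpha^q\mu(B_i),$$
from the definition of $\Omega_\alpha$. Without it you cannot bound $\int_{B_i}|b_i|^q$ nor the Poincar\'e term $r_i^{-q}\int_{B_i}|f-f_{B_i}|^q$ by $\alpha^q\mu(B_i)$, so the $W^1_r$ estimate on $b_\alpha$ (in whatever form) does not close. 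The paper avoids this entirely: it fixes a bounded-overlap cover of $M$ by balls $B^j=B(x_j,1)$, writes $f=\sum_j f\varphi_j$, and does a \emph{separate} CZ decomposition of $f_j=f\varphi_j$ inside each $B^j$, using the relative maximal operator $\mathcal M_{B^j}$ and the relative doubling property of $B^j$ (Lemma \ref{DB}). Every Whitney ball of $\Omega_j=\{\mathcal{M}_{B^j}(|f_j|+|\nabla f_j|)^q>C\alpha^q\}$ is contained in $B^j$, hence automatically has radius $<1$, and its dilate meets $\Omega_j^c\cap B^j$ by construction---no truncation is ever needed. Your proposal is missing this layer.

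Two further points worth noting. (i) Under purely local hypotheses, $\mu(M)$ need not be infinite, so $\Omega_\alpha=M$ is a live possibility (compare the global Proposition \ref{CZ}, where $(D)$ plus non-compactness forces $\Omega\neq M$). Your outline needs the separate direct estimate $K(f,t^{1/r})\le\|f\|_{W^1_r}\lesssim t^{1/r}\bigl(|f|^{q**1/q}(t)+|\nabla f|^{q**1/q}(t)\bigr)$ in that case, as the paper provides. (ii) The claimed bound $\|b_\alpha\|_{W^1_r}^r\lesssim\int_{\Omega_\alpha}(|f|^r+|\nabla f|^r)$ is not what the decomposition delivers; the estimate that actually comes out (and that the rest of your argument needs after choosing $\alpha(t)$ so that $\mu(\Omega_{\alpha(t)})\le t$) is $\|b_\alpha\|_{W^1_r}^r\lesssim\alpha^r\mu(\Omega_\alpha)$. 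This is a small slip, but it propagates into the optimization in $\alpha$.
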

\noindent To prove Theorem \ref{IS}, we characterize the $K$-functional of real interpolation for non-homogeneous Sobolev spaces:
 \begin{thm}\label{EK}
 Let $M$ be as in Theorem \ref{IS},
\begin{itemize}
\item[1.]  There exists $C_{1}>0$ such that for all  $f \in
 W^{1}_{r}+W^{1}_{\infty}$ and all $t>0$ we have
 \begin{equation*} 
K(f,t^{\frac{1}{r}},W^{1}_{r},W^{1}_{\infty})\geq C_{1} t^{\frac{1}{r}}\Bigl(|f|^{r **\frac{1}{r}}(t)+|\nabla
f|^{r**\frac{1}{r}}(t)\Bigr);\
\end{equation*}
\item[2.] For $r\leq q\leq p<\infty$, there is $C_2>0$ such that for all $f\in W_{p}^{1}$ 
\begin{equation*}K(f,t^{\frac{1}{r}},W^{1}_{r},W^{1}_{\infty})\leq C_{2}t^{\frac{1}{r}}
\Bigl(|f|^{q**\frac{1}{q}}(t)+|\nabla
f|^{q**\frac{1}{q}}(t)\Bigr)
\end{equation*}  
\end{itemize}
In the special case $r=q$, we obtain the upper bound of $K$ in point 2. for every $f\in W_q^1+W_\infty^1$ and hence get a true characterization of $K$.
\end{thm}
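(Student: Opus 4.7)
The plan is to establish the two inequalities separately: the lower bound (point 1) by applying rearrangement sub-additivity to every admissible decomposition, and the upper bound (point 2) by constructing a near-optimal explicit decomposition via a Calderón--Zygmund decomposition for Sobolev functions at a well-chosen level.

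For point 1, I would fix $t>0$, take an arbitrary decomposition $f=f_0+f_1$ with $f_0\in W_r^1$ and $f_1\in W_\infty^1$, and use the sub-additive rearrangement inequality $(g+h)^*(s)\leq g^*(s/2)+h^*(s/2)$ together with $h^*(s)\leq \|h\|_\infty$ applied both to $|f|=|f_0+f_1|$ and to $|\nabla f|=|\nabla f_0+\nabla f_1|$. This gives
\[
|f|^*(s)\leq|f_0|^*(s/2)+\|f_1\|_\infty,\qquad |\nabla f|^*(s)\leq|\nabla f_0|^*(s/2)+\|\nabla f_1\|_\infty.
\]
Raising to the power $r$, integrating on $(0,t)$, and taking the $r$-th root, I would bound $t^{1/r}\bigl(|f|^{r**\frac{1}{r}}(t)+|\nabla f|^{r**\frac{1}{r}}(t)\bigr)$ by a constant times $\|f_0\|_{W_r^1}+t^{1/r}\|f_1\|_{W_\infty^1}$. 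Taking the infimum over all admissible decompositions then yields the claimed lower bound.

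For point 2, the key is to exhibit an almost-optimal decomposition. Given $f\in W_p^1$ and $t>0$, I would set the critical level
\[
\alpha_t\,:=\,|f|^{q**\frac{1}{q}}(t)+|\nabla f|^{q**\frac{1}{q}}(t),
\]
and apply a Calderón--Zygmund decomposition for Sobolev functions at level $\alpha_t$: this is the main analytic tool, and it is precisely here that $(D_{loc})$, $(P_{qloc})$, and $p\geq q$ enter. The decomposition produces $f=g_t+b_t$ where $\|g_t\|_\infty+\|\nabla g_t\|_\infty\lesssim \alpha_t$, and $b_t$ is supported on a Whitney covering of the bad set $\Omega_t:=\{M(|f|^q)>\alpha_t^q\}\cup\{M(|\nabla f|^q)>\alpha_t^q\}$, with $\|b_t\|_{L^r}^r+\|\nabla b_t\|_{L^r}^r\lesssim \alpha_t^r\,|\Omega_t|$. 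The Wiener-type bound $|\Omega_t|\lesssim t$ (from the weak-$(1,1)$ estimate for $M$ combined with $(Mg)^*(t)\lesssim g^{**}(t)$) then gives $\|b_t\|_{W_r^1}\lesssim t^{1/r}\alpha_t$, hence
\[
K(f,t^{1/r},W_r^1,W_\infty^1)\leq\|b_t\|_{W_r^1}+t^{1/r}\|g_t\|_{W_\infty^1}\lesssim t^{1/r}\alpha_t,
\]
which is the stated upper bound.

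The main obstacle is the construction and fine estimate of the Calderón--Zygmund decomposition for Sobolev functions on a Riemannian manifold endowed only with local doubling and local Poincaré inequalities: one must build $g_t$ as a Lipschitz approximation of $f$ on $M\setminus\Omega_t$, via Poincaré averaging over a Whitney partition of unity of $\Omega_t$, and control pointwise both $g_t$ and $\nabla g_t$ by $\alpha_t$. The restriction $q\leq p$ is needed so that $M$ is bounded on $L^{p/q}$ applied to $|\nabla f|^q$ (ensuring $|\Omega_t|$ is finite), while $r\leq q$ is needed so that the passage from the natural $L^q$-based bad-part estimates to the required $L^r$ estimates (via Hölder on each Whitney ball) closes up. The special case $r=q$ is easier because no auxiliary exponent $p$ is required: writing any $f\in W_q^1+W_\infty^1$ as $f=\tilde f+f_1$ with $\tilde f\in W_q^1$ and absorbing the $W_\infty^1$ component into $g_t$, one applies the decomposition directly to $\tilde f$, which is why the upper bound then holds on all of $W_q^1+W_\infty^1$ and yields a true characterization of $K$.
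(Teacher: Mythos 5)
Your overall strategy is the right one, and for the \emph{global} case $(D)$ and $(P_q)$ it essentially matches the paper: the lower bound follows from sub-additivity of rearrangements (the paper phrases this as boundedness of $(I,\nabla):W_s^1\to L_s\times L_s$ plus the classical formula $K(f,t,L_r,L_\infty)\sim\bigl(\int_0^{t^r}f^*(s)^r\,ds\bigr)^{1/r}$, which is the same computation in disguise); the upper bound follows by running a Calder\'on--Zygmund decomposition for Sobolev functions at the level $\alpha_t\sim|f|^{q**\frac{1}{q}}(t)+|\nabla f|^{q**\frac{1}{q}}(t)$ and reading off $\|b_t\|_{W_r^1}\lesssim\alpha_t\,\mu(\Omega_t)^{1/r}$, $\|g_t\|_{W_\infty^1}\lesssim\alpha_t$, $\mu(\Omega_t)\lesssim t$. (The paper takes $\alpha(t)=\bigl(\mathcal M(|f|+|\nabla f|)^q\bigr)^{*1/q}(t)$, which makes $\mu(\Omega)\le t$ exact, but this is the same choice up to constants.)

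However, the theorem is stated under the \emph{local} hypotheses $(D_{loc})$, $(P_{qloc})$, and your plan as written does not close in that setting. The Whitney balls covering the global bad set $\Omega_t$ can have arbitrarily large radii, so you cannot apply $(P_{qloc})$ on them and you cannot control the doubling constant. The paper's fix is a genuinely separate step you do not mention: cover $M$ by balls $B^j=B(x_j,1)$ with bounded overlap, choose a smooth partition of unity $(\varphi_j)$, set $f_j=f\varphi_j$, work with the local maximal operator $\mathcal M_{B^j}$ (using a lemma that the $B^j$, with the induced metric-measure structure, are uniformly doubling), carry out the Calder\'on--Zygmund decomposition inside each $B^j$ --- so every Whitney ball $B_{jk}\subset\Omega_j\subset B^j$ now has radius at most $1$ --- and then sum the pieces using the bounded overlap of the $B^j$'s and of the $\Omega_j$'s. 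Without this localization your argument proves the global version only. There is also an additional case you need to dispose of first: when $\Omega_t=M$ (possible once $\mu(M)<\infty$ or under local hypotheses), one instead bounds $K$ directly by $\|f\|_{W_r^1}\lesssim t^{1/r}\alpha_t$.

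One smaller point: your sketch for the $r=q$ extension to all of $W_q^1+W_\infty^1$ does not close as written. If you decompose $f=\tilde f+f_1$, apply the decomposition to $\tilde f$ at level $\alpha_t(\tilde f)$, and absorb $f_1$ into the good part, you obtain a bound of the form $K(f,t^{1/q})\lesssim t^{1/q}\bigl(|f|^{q**\frac1q}(t)+|\nabla f|^{q**\frac1q}(t)\bigr)+t^{1/q}\|f_1\|_{W_\infty^1}$, and the extra term $\|f_1\|_{W_\infty^1}$ is not dominated by the right-hand side nor removable by taking an infimum. The paper itself only refers to the argument of DeVore--Scherer for this extension without reproducing it, so this is a minor point, but your absorption shortcut is not a substitute for it.
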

\noindent The proof of this theorem relies on a Calder\'{o}n-Zygmund decomposition for Sobolev functions (Proposition \ref{CZ}).
 \\
 Above and from now on,  $|g|^{q**\frac{1}{q}}$ means $(|g|^{q**})^{\frac{1}{q}}$ --see section 2 for the definition of $g^{**}$--.
 
 The reiteration theorem (\cite{bennett}, Chapter 5, Theorem 2.4 p.311) and an improvement result for the exponent of a Poincar\'e inequality due to Keith and Zhong yield a more general version of Theorem \ref{IS}. Define $q_{0}=\inf\left\lbrace q\in [1,\infty[:\, (P_{qloc})\, \textrm {holds }\right\rbrace$. 

\begin{cor} \label{CIS}For $1\leq p_{1}<p<p_{2}\leq \infty$ with $p>q_{0}$, $W_{p}^{1}$ is a real interpolation space between $W_{p_1}^{1}$ and $W_{p_2}^{1}$. More precisely 
$$ W_{p}^{1}=(W_{p_1}^1,W_{p_2}^{1})_{\theta,p}
$$
where $0<\theta<1$ such that $\frac{1}{p}=\frac{1-\theta}{p_1}+\frac{\theta}{p_2}$. 
\end{cor}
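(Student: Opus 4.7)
The plan is to deduce the corollary from Theorem \ref{IS} by a standard reiteration argument. I would realize both $W_p^1$ and $W_{p_2}^1$ as real interpolation spaces of a single couple $(W_{p_1}^1, W_\infty^1)$, invoke the reiteration theorem to transfer the description to the couple $(W_{p_1}^1, W_{p_2}^1)$, and finally check that the reiteration parameter matches the $\theta$ prescribed in the statement.

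\textbf{Step 1: Choice of Poincar\'e exponent.} Since $p > q_0$, I would use the definition of $q_0$ together with the Keith-Zhong self-improvement to argue that $(P_{sloc})$ holds for every $s$ in an open range $(q_0,\infty)$. Because $p_1 < p$ and $q_0 < p$, I can then pick $q$ with $\max(p_1,q_0) \le q < p$ such that $(P_{qloc})$ holds; this $q$ will be admissible in Theorem \ref{IS} both for the target exponent $p$ and, since $p < p_2$, for $p_2$ in the case $p_2 < \infty$.

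\textbf{Step 2: Identification via Theorem \ref{IS}.} Applying Theorem \ref{IS} with $r = p_1$ and the $q$ chosen above, and reading off the $K$-functional bounds of Theorem \ref{EK} (integrating against the appropriate weight and using Hardy's inequality on the $**$ maximal operator, which is legitimate since $p > q \ge p_1$), I would obtain the explicit identification
\[
W_p^1 = (W_{p_1}^1, W_\infty^1)_{1 - p_1/p,\, p},
\]
and, when $p_2 < \infty$, also $W_{p_2}^1 = (W_{p_1}^1, W_\infty^1)_{1 - p_1/p_2,\, p_2}$. When $p_2 = \infty$, the first identity already is the statement of the corollary with $\theta = 1 - p_1/p$, since this $\theta$ satisfies $1/p = (1-\theta)/p_1 + \theta/\infty$.

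\textbf{Step 3: Reiteration.} In the case $p_2 < \infty$, I would apply the reiteration theorem of Bennett-Sharpley (\cite{bennett}, Chapter 5, Theorem 2.4) to the couple $(W_{p_1}^1, W_\infty^1)$, treating $W_{p_1}^1$ as the trivial member at parameter $0$ and $W_{p_2}^1$ as the member at parameter $1 - p_1/p_2$ identified in Step 2. This yields $(W_{p_1}^1, W_{p_2}^1)_{\theta, p} = (W_{p_1}^1, W_\infty^1)_{\theta(1-p_1/p_2),\, p}$, and a short algebraic check shows that $\theta(1 - p_1/p_2) = 1 - p_1/p$ is exactly equivalent to $1/p = (1-\theta)/p_1 + \theta/p_2$. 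Combined with Step 2, this gives the corollary. The main technical point will be handling the endpoint convention $\theta_0 = 0$ in the reiteration theorem together with the open-interval structure of valid Poincar\'e exponents in Step 1; modulo these, the argument is formal.
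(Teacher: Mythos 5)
Your proposal is correct and follows essentially the same route as the paper: identify the Sobolev spaces as real interpolation spaces of a couple with $W_\infty^1$ as the right endpoint, using the $K$-functional bounds of Theorem~\ref{EK}, and then invoke the Bennett--Sharpley reiteration theorem. The paper splits the argument into two cases ($p_1>q_0$, where it uses a symmetric reiteration from a couple $(W_q^1,W_\infty^1)$ with $q_0<q<p_1$, versus $p_1\le q_0$, where it reiterates only in the second exponent, treating $W_{p_1}^1$ as the endpoint at parameter $0$); your uniform argument coincides with the paper's second case and in fact covers both, since when $p_1>q_0$ you may simply take $q=p_1\in A_M$, so the case distinction is dispensable.
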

However, if $p\leq q_0$, we only know that that $(W_{p_1}^{1},W_{p_{2}}^{1})_{\theta,p}\subset W_{p}^{1}$. 
\\
For the homogeneous Sobolev spaces, a weak form of Theorem \ref{EK} is available. This result is presented in section 5. The consequence for the interpolation problem is stated as follows.
 \begin{thm}\label{IHS}  Let $M$ be a complete non-compact Riemannian manifold satisfying the global doubling property $(D)$ and a global Poincar\'{e} inequality $(P_{q})$ for some $1\leq q <\infty$. Then, for $1\leq r\leq q<p<\infty$, $\overset{.}{W_{p}^{1}}$ is an interpolation space between $\overset{.}{W_{r}^{1}}$ and $\overset{.}{W_{\infty}^{1}}$. 
\end{thm}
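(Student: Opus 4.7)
The plan is to mimic the proof of Theorem \ref{IS}, replacing the non-homogeneous $K$-functional characterization of Theorem \ref{EK} by its homogeneous analogue (the ``weak form'' promised in section~5). Concretely, I would first prove two inequalities of the shape
\begin{equation*}
C_1 t^{1/r}\bigl(|\nabla f|^{r**\frac{1}{r}}(t)\bigr)\;\leq\;K\bigl(f,t^{1/r},\dot W^1_r,\dot W^1_\infty\bigr)\;\leq\;C_2 t^{1/r}\bigl(|\nabla f|^{q**\frac{1}{q}}(t)\bigr),
\end{equation*}
valid under $(D)$ and $(P_q)$. The lower bound is essentially the usual observation that if $f=g+h$ with $g\in\dot W^1_r$ and $h\in\dot W^1_\infty$, then $|\nabla f|\leq|\nabla g|+\|\nabla h\|_\infty$, so rearrangement gives the estimate after taking the infimum over decompositions.

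The upper bound is the serious step. For $f\in \dot W^1_p$ and $t>0$, I would apply the Calder\'on--Zygmund decomposition for Sobolev functions (Proposition~\ref{CZ}) at the level $\alpha=\alpha(t)$ chosen so that $\alpha^r\sim (|\nabla f|^{q**})(t)^{r/q}$ (or, more precisely, so that the measure of the bad set matches $t$). This produces a decomposition $f=g+\sum_i b_i$ where $g$ has Lipschitz control $\|\nabla g\|_\infty\lesssim \alpha$ and the bad parts $b_i$ are supported in balls $B_i$ of controlled overlap with $\|\nabla b_i\|_r^r\lesssim \alpha^r|B_i|$. Plugging this into the definition of $K$ and using the doubling condition to sum the $|B_i|$'s gives exactly the desired upper estimate with the $q$-rearrangement on the right-hand side, precisely as in the non-homogeneous case. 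The global hypotheses $(D)$ and $(P_q)$ (as opposed to their local versions) are what make the Calder\'on--Zygmund procedure work at all scales $t>0$ with uniform constants, which is needed for the homogeneous statement.

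Once these two bounds are in hand, the standard machinery of the real interpolation method gives the embedding $\dot W^1_p \hookrightarrow (\dot W^1_r,\dot W^1_\infty)_{\theta,p}$ via the lower bound on $K$, while the upper bound together with a Hardy-type inequality on the $q$-maximal function yields the reverse embedding for $r\leq q<p<\infty$ and $\theta=1-r/p$. The main obstacle I expect is the asymmetry of the $K$-functional estimate (lower bound uses the $r$-rearrangement, upper bound uses the $q$-rearrangement), which prevents a clean identification of $K$ unless $r=q$; to push through the interpolation identity nevertheless, I would invoke the $L^p$ boundedness of the rearrangement maximal operator $g\mapsto g^{**\frac{1}{q}}$ for $p>q$, exactly as in the non-homogeneous setting. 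A secondary technical issue is that $\dot W^1_\infty$ is only defined modulo constants, so all estimates have to be interpreted in the appropriate quotient, but this is harmless because the $K$-functional is naturally invariant under additive constants.
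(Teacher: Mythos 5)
Your overall architecture matches the paper's: establish a homogeneous $K$-functional estimate with an $r$-rearrangement lower bound and a $q$-rearrangement upper bound, then interpolate. The lower-bound argument and the observation that constants quotient out harmlessly are both fine. (Small slip: you have the two embeddings swapped --- the lower bound on $K$ gives $(\dot W^1_r,\dot W^1_\infty)_{\theta,p}\hookrightarrow \dot W^1_p$, and the upper bound gives the other inclusion --- but this is cosmetic.)

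The genuine gap is in the upper bound, where you say you would ``apply the Calder\'on--Zygmund decomposition for Sobolev functions (Proposition~\ref{CZ})'' to $f\in\dot W^1_p$. Proposition~\ref{CZ} is stated for $f\in W^1_p$, and its construction crucially uses $f\in L_p$: the bad set is defined via $\mathcal{M}\bigl((|f|+|\nabla f|)^q\bigr)$, the bound $\sum_i\mu(B_i)\lesssim\alpha^{-p}\int(|f|+|\nabla f|)^p\,d\mu$ uses $\|f\|_p<\infty$ to ensure $\mu(\Omega)<\infty$, and the $L_\infty$ control of $g$ itself relies on $|f|\ind_F\leq\alpha$. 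A representative of a class in $\dot W^1_p$ is only in $L_{p,loc}$, so none of this applies as stated. The paper has to establish a separate decomposition (Proposition~\ref{CZH}) in which $\Omega=\{\mathcal{M}(|\nabla f|^q)>\alpha^q\}$, only gradient bounds are asserted for $g$ and $b_i$, and --- this is the delicate point the authors flag explicitly inside the proof of Proposition~\ref{CZ} --- one must argue separately that $g=f-\sum_i b_i$ is locally integrable, since $b=\sum_i b_i$ is no longer in $L_1$. You do implicitly ask only for $\|\nabla g\|_\infty\lesssim\alpha$ and $\|\nabla b_i\|_r$ control, so the spirit of what you need is compatible with the homogeneous decomposition; but invoking Proposition~\ref{CZ} as written is not licit, and the modifications required (new bad set, verification that the $f_{B_i}$-based cancellations still work, local integrability of $g$) are precisely the content of Proposition~\ref{CZH} and cannot be elided by citation.
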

Again, the reiteration theorem implies another version of Theorem \ref{IHS}; see section 5 below. 
\\

For $\mathbb{R}^{n}$ and the non-homogeneous Sobolev spaces, our interpolation result follows from the leading work of Devore-Scherer \cite{devore1}. The method of \cite{devore1} is based on spline functions. Later, simpler proofs were given by Calder\'{o}n-Milman \cite{calderon1} and Bennett-Sharpley \cite{bennett}, based on the Whitney extension and covering theorems. Since $\mathbb{R}^{n}$ admits $(D)$ and $(P_1)$, we recover this result by our method. Moreover, applying Theorem \ref{IHS}, we obtain the interpolation of the homogeneous Sobolev spaces on $\mathbb{R}^{n}$. Notice that this result is not covered by the existing references.
\\
The interested reader may find a wealth of examples of spaces satisfying doubling and Poincar\'e inequalities --to which our results apply-- in
 \cite{ambrosio1}, \cite{auscher2}, \cite{franchi3}, \cite{franchi7}, \cite{hajlasz4}. 
\\

Some comments about the generality of Theorem \ref{IS}- \ref{IHS} are in order. First of all, completeness of the Riemannian manifold is not necessary (see Remark \ref{C}). Also, our technique can be adapted to more general metric-measure spaces, see sections 7-8. Finally it is possible to build examples where interpolation \textit{without} a Poincar\'e inequality \textit{is} possible. The question of the necessity of a Poincar\'e inequality for a general statement arises. This is discussed in the Appendix. 
\\

The initial motivation of this work was to provide an answer for the interpolation question for $\overset{.}{W^{1}_{p}}$. This problem was explicitly posed in \cite{auscher1}, where the authors interpolate inequalities of type $\|\Delta^{\frac{1}{2}}f\|_{p}\leq C_{p}\|\,|\nabla f|\,\|_{p}$ on Riemannian manifolds.
\\

Let us briefly describe the structure of this paper. In section 2 we review the notions of a doubling property as well as the real $K$ interpolation method. In sections 3 to 5, we study in detail the interpolation of Sobolev spaces in the case of a complete non-compact Riemannian manifold $M$ satisfying $(D)$ and $(P_{q})$ (resp. $(D_{loc})$ and $(P_{qloc})$). We briefly mention the case where $M$ is a compact manifold in section 6. In section 7, we explain how our results extend to more general metric-measure spaces. We apply this interpolation result to Carnot-Carath\'eodory spaces, weighted Sobolev spaces and to Lie groups in section 8. Finally, the Appendix is devoted to an example where the Poincar\'e inequality is not necessary to interpolate Sobolev spaces.
\\

\noindent\textit{Acknowledgements.} I am deeply indebted to my Ph.D advisor P. Auscher, who suggested to study the topic of this paper, and for his constant encouragement and useful advices. Also I am thankful to P. Hajlasz for his interest in this work and M. Milman for communicating me his paper with J. Martin \cite{martin}. Finally, I am also grateful to G. Freixas, with whom I had interesting discussions regarding this work. 
\section{Preliminaries}
Throughout this paper we will denote by $\ind_{E}$ the characteristic function of
 a set $E$ and $E^{c}$ the complement of  $E$. If $X$ is a metric space, $\Lip$ will be  the set of real Lipschitz functions on $X$ and $\Lip_{0}$ the set of real, compactly supported Lipschitz functions on $X$. For a ball $B$ in a metric space, $\lambda B$  denotes the ball co-centered with $B$ and with radius $\lambda$ times that of $B$. Finally, $C$ will be a constant
that may change from an inequality to another and we will use $u\sim
v$ to say that there exists two constants $C_{1}$, $C_{2}>0$ such that $C_{1}u\leq v\leq
C_{2}u$.
\subsection{The doubling property}
By a metric-measure space, we mean a triple $(X,d,\mu)$ where $(X,d)$ is a metric space and $\mu$ a non negative Borel measure. Denote by $B(x, r)$ the open ball of center $x\in X $ and radius $r>0$.
\begin{dfn} Let $(X,d,\mu)$ be a metric-measure space. One says that $X$ satisfies the  local doubling property $(D_{loc})$ if there exist constants $r_{0}>0$, $0<C=C(r_{0})<\infty$, such that for all $x\in X,\, 0<r< r_{0} $ we have
\begin{equation*}\tag{$D_ {loc}$}
\mu(B(x,2r))\leq C \mu(B(x,r)).
\end{equation*}
Furthermore $X$ satisfies a global doubling property or simply doubling property $(D)$ if one can take $r_{0}=\infty$.
We also say that $\mu$ is a locally (resp. globally) doubling Borel measure.
\end{dfn}
\noindent Observe that if $X$ is a metric-measure space satisfying $(D)$ then
$$
 \diam(X)<\infty\Leftrightarrow\,\mu(X)<\infty\,\textrm{ (\cite{ambrosio1})} . 
 $$
\begin{thm}[Maximal theorem]\label{MIT} (\cite{coifman2})
Let $(X,d,\mu)$ be a metric-measure space satisfying $(D)$. Denote by $\mathcal{M}$ the uncentered Hardy-Littlewood maximal function over open balls of $X$ defined by
 $$
 \mathcal{M}f(x)=\underset{B:x\in B}{\sup}|f|_{B}
 $$ 
 where $ \displaystyle f_{E}:=\aver{E}f d\mu:=\frac{1}{\mu(E)}\int_{E}f d\mu.$
Then

\begin{itemize}
\item[1.] $\mu(\left\lbrace x:\,\mathcal{M}f(x)>\lambda\right\rbrace)\leq \frac{C}{\lambda}\int_{X}|f| d\mu$ for every $\lambda>0$;
\item[2.] $\|\mathcal{M}f\|_{L_{p}}\leq C_{p} \|f\|_{L_{p}}$, for $1<p\leq\infty$.
\end{itemize}
\end{thm}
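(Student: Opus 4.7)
The plan is to follow the classical two-step strategy for the Hardy--Littlewood maximal operator in spaces of homogeneous type: first prove the weak-type $(1,1)$ bound in point 1 via a Vitali-type covering argument that consumes the doubling condition, and then deduce the strong-type bounds in point 2 by interpolating against the elementary $L^\infty$ estimate.

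For point 1, I fix $\lambda>0$ and set $E_\lambda=\{x\in X:\mathcal{M}f(x)>\lambda\}$. For each $x\in E_\lambda$ the definition of $\mathcal{M}$ supplies an open ball $B_x\ni x$ with $|f|_{B_x}>\lambda$, i.e.\ $\mu(B_x)<\lambda^{-1}\int_{B_x}|f|\,d\mu$. The radii of the $B_x$ are not a priori bounded, so I truncate: I work first with the subfamily $\{B_x:r(B_x)\le R\}$ covering $E_\lambda^R:=\{\mathcal{M}_R f>\lambda\}$, where $\mathcal{M}_R$ is the maximal function restricted to balls of radius $\le R$, and at the end let $R\to\infty$ (monotone convergence of indicator functions). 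On this bounded-radius family I apply the standard $5r$-covering lemma, valid in any metric space, to extract a pairwise disjoint subfamily $\{B_i\}$ such that $E_\lambda^R\subset\bigcup_i 5B_i$. Iterating the doubling property $(D)$ a fixed (finite) number of times gives $\mu(5B_i)\le C\mu(B_i)$ with $C$ depending only on the doubling constant, and disjointness yields
\begin{equation*}
\mu(E_\lambda^R)\le\sum_i\mu(5B_i)\le C\sum_i\mu(B_i)\le\frac{C}{\lambda}\sum_i\int_{B_i}|f|\,d\mu\le\frac{C}{\lambda}\|f\|_1.
\end{equation*}
Letting $R\to\infty$ delivers the weak-type $(1,1)$ inequality.

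For point 2, the endpoint $p=\infty$ is immediate: every ball average satisfies $|f|_B\le\|f\|_\infty$, hence $\|\mathcal{M}f\|_\infty\le\|f\|_\infty$. Combining the weak-type $(1,1)$ estimate from point 1 with this strong-type $(\infty,\infty)$ bound, the Marcinkiewicz interpolation theorem (whose proof only uses the distribution function of $\mathcal{M}f$, and therefore transfers verbatim to the metric-measure setting) yields $\|\mathcal{M}f\|_p\le C_p\|f\|_p$ for all $1<p<\infty$, with $C_p$ blowing up like $(p-1)^{-1}$ as $p\to 1$.

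The main obstacle is the Vitali-type covering step in point 1: in a general doubling metric space the balls $B_x$ can have arbitrarily large radii, so a naive application of the $5r$-lemma fails. The truncation-and-exhaustion device described above is the only subtlety; once it is in place, the doubling property converts disjointness of the selected balls into a uniform comparison between $\mu(E_\lambda^R)$ and $\|f\|_1$, and the rest is standard real interpolation.
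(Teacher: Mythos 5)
The paper does not prove Theorem~\ref{MIT}; it quotes it as a classical fact from the reference to Coifman--Weiss, so there is no internal proof to compare against. Your argument is correct and is the standard one: weak-type $(1,1)$ via the $5r$-covering lemma applied after truncating radii (with the doubling constant controlling $\mu(5B_i)/\mu(B_i)$), followed by Marcinkiewicz interpolation against the trivial $L^\infty$ bound. The truncation device you flag is exactly the right fix for the unbounded-radius issue, and the limit $R\to\infty$ is justified since $E_\lambda^R\uparrow E_\lambda$. One minor point worth noting: when $\|f\|_1<\infty$, disjointness together with the bound $\mu(B_i)<\lambda^{-1}\int_{B_i}|f|\,d\mu$ forces the extracted family to be countable (only countably many disjoint sets of positive measure can have summable measures), so there is no set-theoretic subtlety with the maximal disjoint subfamily even though the ambient metric space is not assumed separable a priori.
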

\subsection{The $K$-method of real interpolation} The reader can refer to \cite{bennett}, \cite{bergh} for details on the development of this theory. Here we only recall the essentials to be used in the sequel. 

Let $A_{0}$, $A_{1}$ be  two normed vector spaces embedded in a topological Hausdorff vector space $V$. For each  $a\in A_{0}+A_{1}$ and $t>0$, we define the $K$-functional of interpolation by
$$
K(a,t,A_{0},A_{1})=\displaystyle \inf_{a
=a_{0}+a_{1}}(\| a_{0}\|_{A_{0}}+t\|
a_{1}\|_{A_{1}}).
$$

For $0<\theta< 1$, $1\leq q\leq \infty$, we denote by $(A_{0},A_{1})_{\theta,q}$ the interpolation space between $A_{0}$ and $A_{1}$:
\begin{displaymath}
	(A_{0},A_{1})_{\theta,q}=\left\lbrace a \in A_{0}+A_{1}:\|a\|_{\theta,q}=\left(\int_{0}^{\infty}(t^{-\theta}K(a,t,A_{0},A_{1}))^{q}\,\frac{dt}{t}\right)^{\frac{1}{q}}<\infty\right\rbrace.
\end{displaymath}
It is an exact interpolation space of exponent $\theta$ between $A_{0}$ and $A_{1}$, see \cite{bergh}, Chapter II.
\begin{dfn}
Let $f$  be a measurable function on a measure space $(X,\mu)$. The decreasing rearrangement of $f$ is the function $f^{*}$ defined for every $t\geq 0$ by
$$
f^{*}(t)=\inf \left\lbrace\lambda :\, \mu (\left\lbrace x:\,|f(x)|>\lambda\right\rbrace)\leq
t\right\rbrace.
$$
The maximal decreasing rearrangement of
$f$ is the function $f^{**}$ defined for every $t>0$ by
$$
f^{**}(t)=\frac{1}{t}\int_{0}^{t}f^{*}(s) ds.
$$
\end{dfn}
 It is known that $(\mathcal{M}f)^{*}\sim f^{**}$ and $\mu (\left\lbrace x:\, |f(x)|>f^{*}(t)\right\rbrace)\leq t$ for all $t>0$.
We refer to \cite{bennett}, \cite{bergh}, \cite{calderon2} for other properties of $f^{*}$ and $f^{**}$.
\\

We conclude the preliminaries by quoting the following classical result (\cite{bergh} p.109):
\begin{thm}\label{IK} Let $(X,\mu)$ be a measure space where $\mu$ is a totally $\sigma$-finite 
positive measure. Let $f\in L_{p}+L_{\infty}$, $0<p<\infty$ where
$L_{p}=L_{p}(X,d\mu)$. We then have
\begin{itemize}
\item[1.] $K(f,t,L_{p},L_{\infty})\sim\Bigl(\int_{0}^{t^{p}}(f^{*}(s))^{p}ds\Bigr)^\frac{1}{p}$ and equality  holds for $p=1$;
\item[2.] for $0<p_{0}<p<p_{1}\leq\infty$, $(L_{p_{0}},L_{p_{1}})_{\theta,p}=L_{p}$ with equivalent norms, where $\displaystyle\frac{1}{p}=\frac{1-\theta}{p_{0}}+\frac{\theta}{p_{1}}$
with $0<\theta<1$.
\end{itemize}
\end{thm}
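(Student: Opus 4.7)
The plan for part 1 follows the classical truncation argument. Given $f\in L_p+L_\infty$ and $t>0$, set $\lambda:=f^*(t^p)$ and split $f=f_0+f_1$ with $f_1(x):=\mathrm{sgn}(f(x))\min(|f(x)|,\lambda)$, so that $f_0=f-f_1$ and $\|f_1\|_\infty\le\lambda$. Since $|f_0|=(|f|-\lambda)_+$, its decreasing rearrangement is $(f^*-\lambda)_+$, supported on $[0,t^p]$; hence
\[\|f_0\|_p^p \;=\; \int_0^{t^p}\bigl(f^*(s)-\lambda\bigr)^p\,ds \;\le\; \int_0^{t^p} f^*(s)^p\,ds.\]
The monotonicity of $f^*$ yields $t\lambda\le\left(\int_0^{t^p}f^*(s)^p\,ds\right)^{1/p}$, giving the upper bound on $K(f,t,L_p,L_\infty)$. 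For the reverse inequality, any admissible decomposition $f=g_0+g_1$ satisfies $f^*(s)\le g_0^*(s)+\|g_1\|_\infty$; raising to the $p$-th power, integrating over $(0,t^p)$, extracting $p$-th roots, and taking the infimum over $(g_0,g_1)$ produces the matching lower bound up to a constant depending on $p$. In the case $p=1$ the Hardy--Littlewood rearrangement inequality $\int_0^t f^* \le \int_0^t(g_0^*+\|g_1\|_\infty)$ together with the saturation achieved by the truncation above yields exact equality $K(f,t,L_1,L_\infty)=\int_0^t f^*(s)\,ds$.

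For part 2, I would first treat $p_1=\infty$. Substituting the characterization from part 1 into the definition of $(L_{p_0},L_\infty)_{\theta,p}$ and changing variable $u=t^{p_0}$, one uses $1/p=(1-\theta)/p_0$ (so $\theta p/p_0=p/p_0-1$) to rewrite the norm, up to a constant, as
\[\int_0^\infty \left(\frac{1}{u}\int_0^u f^*(s)^{p_0}\,ds\right)^{p/p_0}\!du.\]
Since $p/p_0>1$, Hardy's inequality bounds this above by $C\int_0^\infty f^*(u)^p\,du=C\|f\|_p^p$, while the monotonicity of $f^*$ gives the pointwise reverse bound $\frac{1}{u}\int_0^u f^*(s)^{p_0}\,ds\ge f^*(u)^{p_0}$, so the integral also dominates $\|f\|_p^p$. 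For $p_1<\infty$, repeating the truncation of part 1 with cut level $\lambda=f^*(t^\alpha)$, $\alpha=(1/p_0-1/p_1)^{-1}$, produces Holmstedt's formula
\[K(f,t,L_{p_0},L_{p_1})\;\sim\;\left(\int_0^{t^\alpha}f^*(s)^{p_0}\,ds\right)^{1/p_0}+t\left(\int_{t^\alpha}^\infty f^*(s)^{p_1}\,ds\right)^{1/p_1},\]
and the same change of variable coupled with Hardy's inequality applied to each term yields $(L_{p_0},L_{p_1})_{\theta,p}=L_p$.

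The main obstacle is the lower bound of part 1, which requires bookkeeping the different behaviours of $(a+b)^p$ versus $a^p+b^p$ across $p\ge 1$ and $0<p<1$ in order not to lose a power when passing between $\|g_0\|_p^p+t^p\|g_1\|_\infty^p$ and $(\|g_0\|_p+t\|g_1\|_\infty)^p$. Once those constants are marshalled, the rest reduces to Hardy's inequality (for the upper bound in part 2) and direct monotonicity estimates for $f^*$ (for the lower bound), so no further delicate ingredient is needed.
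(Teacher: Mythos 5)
Your proof is correct, and it is worth noting first that the paper does not supply its own argument here: Theorem \ref{IK} is a black box, quoted from Bergh--L\"ofstr\"om (\cite{bergh}, Ch.\ 5). So there is no in-paper proof to compare against; what you have written is essentially the standard argument found there. The truncation at level $\lambda=f^*(t^p)$ for the upper bound on $K$, the subadditivity $f^*(s)\le g_0^*(s)+\|g_1\|_\infty$ for the lower bound, the observation that equality occurs at $p=1$ because the truncation saturates it, and the passage to part 2 via the change of variable $u=t^{p_0}$ followed by Hardy's inequality (and Holmstedt's reiteration formula for finite $p_1$) are all exactly the classical route. Two very small cosmetic points: when you write $\int_0^{t^p}\bigl(f^*(s)-\lambda\bigr)^p\,ds$ the integrand should carry the positive part $\bigl(f^*(s)-\lambda\bigr)_+^p$, which is automatic here only because $f^*$ is nonincreasing and $\lambda=f^*(t^p)$ so the integrand is nonnegative on $(0,t^p)$; and you are right to flag the $0<p<1$ quasi-norm bookkeeping, which is where the constant in the lower bound comes from. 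Neither affects the validity of the argument.
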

\section{Non-homogeneous Sobolev spaces on Riemannian manifolds}
In this section $M$ denotes a complete non-compact Riemannian manifold. We write $\mu$ for the Riemannian measure on $M$, $\nabla$ for the Riemannian gradient, $|\cdot|$ for the length on the tangent space (forgetting the subscript $x$ for simplicity) and $\|\cdot\|_{p}$ for the norm on $ L_{p}(M,\mu)$, $1 \leq p\leq +\infty.$ Our goal is to prove Theorem \ref{EK}.
\subsection{Non-homogeneous Sobolev spaces}
\begin{dfn}[\cite{aubin1}]\label{DNH} Let $M$ be a $C^{\infty}$ Riemannian manifold
 of dimension $n$. Write $E^{1}_{p}$ for the vector space of $C^{\infty}$ functions $\varphi
$ such that $\varphi $ and $|\nabla\varphi|\in L_p,
\,1\leq p< \infty$. We define the Sobolev space $W^{1}_{p}$ as the completion of  $E^{1}_{p}$ for the norm
$$
\|\varphi\|_{W^{1}_{p}}=\|\varphi\|_{p}+\|\,|\nabla\varphi|\,\|_{p}.
$$
We denote $W^{1}_{\infty}$ for the set of all bounded Lipschitz functions on $M$.
\end{dfn}
\begin{prop}\label{CDW} (\cite{aubin1}, \cite{goldshtein}) Let $M$ be a complete Riemannian manifold. Then $ C^{\infty}_{0}$ and in particular $Lip_{0}$ is dense in $W^{1}_{p}$ for $1\leq p<\infty$.
 \end{prop}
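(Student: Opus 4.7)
The plan is to show density of $C^\infty_0$; density of $\mathrm{Lip}_0$ then follows automatically since $C^\infty_0 \subset \mathrm{Lip}_0$. Because $W^1_p$ is defined as the completion of $E^1_p$, it suffices to approximate an arbitrary $\varphi \in E^1_p$ in the $W^1_p$-norm by compactly supported smooth functions. I would proceed in two steps: first truncate $\varphi$ to compact support, then smooth the truncation.

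For the truncation, completeness enters through the Hopf-Rinow theorem, which guarantees that closed metric balls are compact and that $d_0 := d(\cdot,x_0)$ is globally $1$-Lipschitz for any fixed basepoint $x_0 \in M$. Pick $\eta \in C^\infty(\mathbb{R})$ with $\eta \equiv 1$ on $(-\infty,1]$, $\eta \equiv 0$ on $[2,\infty)$, and $|\eta'| \leq 2$, and set $\chi_k := \eta(d_0/k)$. Then $\chi_k$ is a Lipschitz function equal to $1$ on $B(x_0,k)$, supported in $B(x_0,2k)$, with $|\nabla \chi_k| \leq 2/k$ $\mu$-a.e. Writing $\nabla(\chi_k \varphi) = \chi_k \nabla \varphi + \varphi \nabla \chi_k$ a.e., the pieces $\chi_k \varphi$ and $\chi_k \nabla \varphi$ converge to $\varphi$ and $\nabla \varphi$ in $L_p$ by dominated convergence (each is pointwise bounded by an $L_p$ function and converges pointwise), while the error term satisfies $\|\varphi \nabla \chi_k\|_p^p \leq (2/k)^p \int_{\{d_0 \geq k\}} |\varphi|^p \, d\mu \to 0$ since $|\varphi|^p$ is integrable. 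Therefore $\chi_k \varphi \to \varphi$ in $W^1_p$, which already establishes density of compactly supported Lipschitz functions.

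The remaining step is to promote each Lipschitz compactly supported function $\chi_k \varphi$ to a $C^\infty_0$ function within arbitrary $W^1_p$-distance. I would cover its compact support by finitely many relatively compact normal coordinate charts, split via a subordinate smooth partition of unity, and mollify each local piece in its chart with a standard Euclidean mollifier at a sufficiently small scale. The Riemannian metric and volume are equivalent to their Euclidean counterparts up to fixed constants on each chart, so the classical Euclidean fact that mollification of a Lipschitz function converges in $W^1_p$ transfers directly to the manifold setting. The main point requiring care is the bookkeeping of constants across the finite cover; this step, though routine, is the reason one must invoke local charts, and it is precisely the argument contained in the cited references of Aubin and Goldshtein.
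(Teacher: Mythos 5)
The paper states this proposition as a cited fact from Aubin and Goldshtein without supplying a proof, so there is no internal argument to compare against; your truncate--then--mollify argument is the standard one behind those references and is correct. Completeness enters exactly as you say (Hopf--Rinow gives compact support to the Lipschitz cutoff $\eta(d_0/k)$), the Leibniz rule for a Lipschitz function times a smooth one together with dominated convergence handles the truncation error, and a finite cover by normal charts with a subordinate partition of unity plus Euclidean mollification promotes the compactly supported Lipschitz approximant to $C^\infty_0$ within any prescribed $W^1_p$-distance.
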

\begin{dfn}[Poincar\'{e} inequality on $M$] We say that a complete Riemannian manifold $M$ admits \textbf{a local Poincar\'{e} inequality $(P_{qloc})$} for some $1\leq q<\infty$ if there exist constants $r_{1}>0,\,C=C(q,r_{1})>0$ such that, for every function $f\in \Lip_{0}$ and every ball $B$ of $M$ of radius $0<r<r_{1}$, we have
\begin{equation*}\tag{$P_{qloc}$}
\aver{B}|f-f_{B}|^{q} d\mu \leq C r^{q} \aver{B}|\nabla f|^{q}d\mu.
\end{equation*}
$M$ admits a global Poincar\'{e} inequality $(P_{q})$ if we can take $r_{1}=\infty$ in this definition.
\end{dfn}
\begin{rem} By density of $C_{0}^{\infty}$ in $W_{p}^{1}$, we can replace $\Lip_{0}$ by $C_{0}^{\infty}$.
\end{rem}
\subsection[Estimation of $K$]{Estimation of the $K$-functional of interpolation}
In the first step, we prove Theorem \ref{EK} in the global case. This will help us to understand the proof of the more general local case.
\subsubsection{The global case}\label{CG}
 Let $M$ be a complete Riemannian manifold satisfying $(D)$ and $(P_{q})$, for some $1\leq q <\infty$. Before we prove  Theorem \ref{EK}, we make a Calder\'{o}n-Zygmund decomposition for Sobolev functions inspired by the one done in \cite{auscher1}. To achieve our aims, we state it for more general spaces (in \cite{auscher1}, the authors only needed the decomposition for the functions $f$ in $C^{\infty}_{0}$). This will be the principal tool in the estimation of the functional $K$.
\begin{prop}[Calder\'{o}n-Zygmund lemma for Sobolev functions]\label{CZ} Let $M$ be a complete non-compact Riemannian manifold satisfying $(D)$.
Let $1\leq q<\infty$ and assume that $M$ satisfies $(P_{q})$. Let $q\leq p<\infty$, $f \in W^{1}_{p}$ and $\alpha>0$. Then one can find a collection of balls $(B_{i})_{i}$, functions $b_{i}\in W_{q}^{1}$ and a Lipschitz function $g$ such that the following properties hold:
\begin{equation}
f = g+\sum_{i}b_{i} \label{df}
\end{equation}
\begin{equation}
|g(x)|\leq C\alpha\,\textrm{ and }\,|\nabla g(x)|\leq C\alpha\quad \mu-a.e\; x\in M \label{eg}
\end{equation}
\begin{equation}
\supp b_{i}\subset B_{i}, \,\int_{B_{i}}(|b_{i}|^{q}+|\nabla b_{i}|^{q})d\mu\leq C\alpha^{q}\mu(B_{i})\label{eb}
\end{equation}
\begin{equation}
\sum_{i}\mu(B_{i})\leq C\alpha^{-p}\int (|f|+|\nabla f|)^{p} d\mu
\label{eB}
\end{equation}
\begin{equation}
\sum_{i}\chi_{B_{i}}\leq N \label{rb}.
\end{equation}
The constants $C$ and $N$ only depend on $q$, $p$ and on the constants in $(D)$ and $(P_{q})$.
\end{prop}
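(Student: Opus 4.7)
The plan is to run a Calderón–Zygmund stopping-time argument at level $\alpha$ applied to the function $|f|^q+|\nabla f|^q$, which lies in $L_{p/q}$ since $p\geq q$ and $f\in W_p^1$. Let
$$\Omega=\{x\in M:\,\mathcal{M}(|f|^q+|\nabla f|^q)(x)>\alpha^q\}.$$
By Theorem \ref{MIT} applied with exponent $p/q>1$ (or the weak-$(1,1)$ estimate when $p=q$), one gets
$$\mu(\Omega)\leq C\alpha^{-p}\int(|f|+|\nabla f|)^p\,d\mu,$$
which is exactly (\ref{eB}). If $\Omega=M$, one takes $g\equiv 0$ and a single cover of $M$, so assume $\Omega\neq M$.

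Next I would build a Whitney covering of the open set $\Omega$ by balls $B_i=B(x_i,r_i)$ with $r_i\sim d(x_i,\Omega^c)$, such that $2B_i\subset\Omega$, a fixed enlargement $CB_i$ meets $\Omega^c$, and $\sum_i\chi_{B_i}\leq N$; such a covering exists on any doubling metric space. Take a subordinate Lipschitz partition of unity $(\chi_i)$ with $\supp\chi_i\subset 2B_i$, $0\leq\chi_i\leq 1$, $\sum_i\chi_i=\ind_\Omega$, and $|\nabla\chi_i|\leq C/r_i$. Define
$$b_i=(f-f_{B_i})\chi_i,\qquad g=f\ind_{\Omega^c}+\sum_i f_{B_i}\chi_i,$$
so that (\ref{df}) holds on all of $M$. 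Property (\ref{rb}) is built into the covering. For (\ref{eb}), the support is clear, and applying $(P_q)$ on $2B_i$ together with $|\nabla\chi_i|\leq C/r_i$ gives
$$\int_{B_i}(|b_i|^q+|\nabla b_i|^q)\,d\mu\leq C\int_{2B_i}(|f-f_{B_i}|^q/r_i^q+|\nabla f|^q)\,d\mu\leq C\int_{CB_i}|\nabla f|^q\,d\mu.$$
Since $CB_i$ meets a point $y\in\Omega^c$ where $\mathcal{M}(|f|^q+|\nabla f|^q)(y)\leq\alpha^q$, the doubling property of $\mu$ yields $\int_{CB_i}|\nabla f|^q\,d\mu\leq C\alpha^q\mu(B_i)$, establishing (\ref{eb}).

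The delicate step is (\ref{eg}). At $\mu$-a.e.\ $x\in\Omega^c$ (a Lebesgue point of $|f|^q$ and of $|\nabla f|^q$) one directly has $|f(x)|\leq\alpha$ and $|\nabla f(x)|\leq\alpha$ from the definition of $\Omega$, so $|g|\leq\alpha$ there. For $x\in\Omega$, fix an index $i$ with $x\in B_i$; since $\sum_j\chi_j(x)=1$ and only boundedly many $\chi_j$ are nonzero at $x$, one writes
$$g(x)=f_{B_i}+\sum_j(f_{B_j}-f_{B_i})\chi_j(x).$$
For each $j$ with $x\in 2B_j$, the balls $B_i$ and $B_j$ have comparable radii and both sit in a controlled dilate $\widetilde B$ of $B_i$ still contained in $\Omega$ but meeting $\Omega^c$; a chaining argument using $(P_q)$ and the maximal-function bound at a point of $CB_i\cap\Omega^c$ gives $|f_{B_j}-f_{B_i}|\leq Cr_i\alpha$ and $|f_{B_i}|\leq C\alpha$, hence $|g(x)|\leq C\alpha$. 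Since $\sum_j\nabla\chi_j=0$ on $\Omega$, one may rewrite $\nabla g$ on $\Omega$ as
$$\nabla g(x)=\sum_j(f_{B_j}-f_{B_i})\nabla\chi_j(x),$$
and the same chaining estimate combined with $|\nabla\chi_j|\leq C/r_j\leq C/r_i$ yields $|\nabla g(x)|\leq C\alpha$. A similar comparison across $\partial\Omega$ (between $g(x)=\sum f_{B_j}\chi_j(x)$ for $x\in\Omega$ near the boundary and $g(y)=f(y)$ for a Lebesgue point $y\in\Omega^c$ close to $x$) shows that $g$ is globally Lipschitz with constant $C\alpha$.

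The main obstacle, as usual in this decomposition, is the pointwise gradient bound on $g$ inside $\Omega$ and the Lipschitz extension across $\partial\Omega$: both rely on the cancellation $\sum_j\nabla\chi_j=0$ combined with a Poincaré-type telescoping between Whitney balls, and this is where $(P_q)$ is essentially used. The density statement of Proposition \ref{CDW} enters implicitly to justify applying $(P_q)$ (stated for $\Lip_0$) to $f\in W_p^1$ by approximation.
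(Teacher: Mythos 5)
Your proposal follows essentially the same route as the paper: stopping time on the maximal function of $|f|^{q}+|\nabla f|^{q}$ (equivalently $(|f|+|\nabla f|)^{q}$), a Whitney cover of $\Omega$ with a subordinate Lipschitz partition of unity, $b_{i}=(f-f_{B_{i}})\chi_{i}$, $g=f\ind_{\Omega^{c}}+\sum_{i}f_{B_{i}}\chi_{i}$, and the cancellation $\sum_{j}\nabla\chi_{j}=0$ for the gradient bound on $g$. However, there are two genuine gaps, and they share a common root: in the global setting the Whitney radii $r_{i}$ are \emph{not} bounded above, and your estimates implicitly assume they are.

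First, in the bound for $(\ref{eb})$ you write
$\int_{B_{i}}(|b_{i}|^{q}+|\nabla b_{i}|^{q})\,d\mu\leq C\int_{2B_{i}}\bigl(|f-f_{B_{i}}|^{q}/r_{i}^{q}+|\nabla f|^{q}\bigr)d\mu$
and then apply $(P_{q})$. The $r_{i}^{-q}$ factor is correct for the piece of $|\nabla b_{i}|^{q}$ coming from $|f-f_{B_{i}}|^{q}|\nabla\chi_{i}|^{q}$, but $|b_{i}|^{q}\leq|f-f_{B_{i}}|^{q}$ is \emph{not} controlled by $|f-f_{B_{i}}|^{q}/r_{i}^{q}$ when $r_{i}>1$; and if instead you keep $|f-f_{B_{i}}|^{q}$ and apply $(P_{q})$, you obtain $Cr_{i}^{q}\alpha^{q}\mu(B_{i})$, which is useless for large $r_{i}$. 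The paper avoids Poincar\'e entirely for this term: it writes $|f-f_{B_{i}}|\leq|f|+|f_{B_{i}}|$, applies Jensen to get $|f_{B_{i}}|^{q}\leq\avert{B_{i}}|f|^{q}d\mu$, and then uses the stopping-time bound $\int_{\overline{B_{i}}}|f|^{q}d\mu\leq\alpha^{q}\mu(\overline{B_{i}})\leq C\alpha^{q}\mu(B_{i})$.

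Second, your argument for $|g(x)|\leq C\alpha$ on $\Omega$ combines the telescoping identity $g(x)=f_{B_{i}}+\sum_{j}(f_{B_{j}}-f_{B_{i}})\chi_{j}(x)$ with $|f_{B_{j}}-f_{B_{i}}|\leq Cr_{i}\alpha$ and $|f_{B_{i}}|\leq C\alpha$. Since $\chi_{j}(x)\leq 1$ (there is no $r_{i}^{-1}$ factor here, unlike for $\nabla\chi_{j}$), this chain only yields $|g(x)|\leq C\alpha(1+Nr_{i})$, again unbounded. The correct and simpler observation, which the paper uses, is that the stopping-time bound at a point of $\overline{B_{j}}\cap F$ gives $|f_{B_{j}}|\leq C\alpha$ for \emph{every} $j$ directly (via Jensen and doubling, no Poincar\'e), whence $|g(x)|\leq\sum_{j}|f_{B_{j}}|\chi_{j}(x)\leq CN\alpha$. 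The telescoping with $|f_{B_{j}}-f_{B_{i}}|\leq Cr_{i}\alpha$ is reserved for $\nabla g$, where it pairs with $|\nabla\chi_{j}|\leq C/r_{j}\sim 1/r_{i}$ to cancel the $r_{i}$. Your $\nabla g$ estimate is correct; your $g$ estimate needs this repair.
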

\begin{proof} Let  $f\in W_{p}^{1}$, $\alpha>0$. Consider
$\Omega=\left\lbrace x \in M : \mathcal{M}(|f|+|\nabla f|)^{q}(x)>\alpha^{q}\right\rbrace$. If $\Omega=\emptyset$, then set
$$
 g=f\;,\quad b_{i}=0 \, \text{ for all } i
$$
so that (\ref{eg}) is satisfied according to the Lebesgue differentiation theorem. Otherwise the maximal theorem --Theorem \ref{MIT}-- gives us
\begin{align}
	\mu(\Omega)&\leq C\alpha^{-p}\|(|f|+ |\nabla f|)^{q}\|_{\frac{p}{q}}^{\frac{p}{q}} \nonumber\\
			& \leq C \alpha^{-p} \Bigr(\int | f|^{p} d\mu +\int |\nabla f|^{p} d\mu\Bigl) \label{mO}
\\
			&<+\infty. \nonumber
\end{align}
 In particular $\Omega \neq M$ as $\mu(M)=+\infty$. Let $F$ be the complement of $\Omega$. Since $\Omega$ is an open set distinct of $M$, let
$(\underline{B_{i}})$ be a Whitney decomposition of $\Omega$ (\cite{coifman1}). That is, the balls  $\underline{B_{i}}$ are pairwise disjoint and there exist two constants $C_{2}>C_{1}>1$, depending only
on the metric, such that
\begin{itemize}
\item[1.] $\Omega=\cup_{i}B_{i}$ with $B_{i}=
C_{1}\underline{B_{i}}$ and the balls $B_{i}$ have the bounded overlap property;
\item[2.] $r_{i}=r(B_{i})=\frac{1}{2}d(x_{i},F)$ and $x_{i}$ is 
the center of $B_{i}$;
\item[3.] each ball $\overline{B_{i}}=C_{2}B_{i}$ intersects $F$ ($C_{2}=4C_{1}$ works).
\end{itemize}
For $x\in \Omega$, denote $I_{x}=\left\lbrace i:x\in B_{i}\right\rbrace$. By the bounded overlap property of the balls $B_{i}$, we have that $\sharp I_{x} \leq N$. Fixing $j\in I_{x}$ and using the properties of the $B_{i}$'s, we easily see that $\frac{1}{3}r_{i}\leq r_{j}\leq 3r_{i}$ for all $i\in I_{x}$. In particular, $B_{i}\subset 7B_{j}$ for all $i\in I_{x}$.

Condition (\ref{rb}) is nothing but the bounded overlap property of the $B_{i}$'s  and (\ref{eB}) follows from (\ref{rb}) and  (\ref{mO}). The doubling property and the fact that $\overline{B_{i}} \cap F
\neq \emptyset$ yield
\begin{equation}\label{f}
\int_{B_{i}} (|f|^{q}+|\nabla f|^{q})d\mu  \leq
\int_{\overline{B_{i}}} (|f|+|\nabla f|)^{q} d\mu
\leq \alpha^{q} \mu(\overline{B_{i}})
\leq C \alpha^{q}\mu(B_{i}).
\end{equation}

Let us now define the functions $b_{i}$. Let $(\chi_{i})_{i}$ be  a partition of unity of $\Omega$ subordinated to the covering $(\underline{B_{i}})$, such that for all $i$, $\chi_{i}$ is a Lipschitz function supported in $B_{i}$ with
$\displaystyle\|\,|\nabla \chi_{i}|\, \|_{\infty}\leq
\frac{C}{r_{i}}$. To this end it is enough to choose $\displaystyle\chi_{i}(x)=
\psi(\frac{C_{1}d(x_{i},x)}{r_{i}})\Bigl(\sum_{k}\psi(\frac{C_{1}d(x_{k},x)}{r_{k}})\Bigr)^{-1}$, where $\psi$ is a smooth function, $\psi=1$ on $[0,1]$, $\psi=0$
on $[\frac{1+C_{1}}{2},+\infty[$ and $0\leq \psi\leq 1$. 	
We set $b_{i}=(f-f_{B_{i}})\chi_{i}$. It is clear that $\supp b_{i} \subset B_{i}$.
Let us estimate $\int_{B_{i}} |b_{i}|^{q} d\mu$ and
$\int_{B_{i}} |\nabla b_{i}|^{q} d\mu$. We have
\begin{align*}
\int_{B_{i}} |b_{i}|^{q} d\mu
&=\int_{B_{i}} |(f-f_{B_{i}})\chi_{i}|^{q} d\mu
\\
&\leq
C(\int_{B_{i}}|f|^{q}d\mu+\int_{B_{i}}|f_{B_{i}}|^{q} d\mu)
\\
&\leq C\int_{B_{i}}|f|^{q} d\mu
\\
&\leq C \alpha^{q} \mu(B_{i}).
\end{align*}
We applied Jensen's inequality in the second estimate, and (\ref{f}) in the last one. Since $\nabla\Bigl((f-f_{B_{i}})\chi_{i}\Bigr)=\chi_{i}\nabla f
+(f-f_{B_{i}})\nabla\chi_{i}$, the Poincar\'e inequality $(P_{q})$ and (\ref{f}) yield
\begin{align*}
\int_{B_{i}}|\nabla b_{i}|^{q}d\mu
 &\leq
C\left(\int_{B_{i}}|\chi_{i}\nabla f|^{q}d\mu
+\int_{B_{i}}|f-f_{B_{i}}|^{q}|\nabla \chi_{i}|^{q}d\mu \right)
\\
&\leq C\alpha^{q}\mu(B_{i})+ C\frac{C^{q}}{r_{i}^{q}} r_{i}^{q}\int_{B_{i}}|\nabla
f|^{q}d\mu
\\
&\leq C\alpha^{q}\mu(B_{i}).
\end{align*}
Therefore (\ref{eb}) is proved. 

Set $ \displaystyle g=f-\sum_{i}b_{i}$. Since the sum is locally finite on $\Omega$,  $g$ is defined  almost everywhere on $M$ and $g=f$ on $F$. Observe that $g$ is a locally integrable function on $M$. Indeed, let $\varphi\in L_{\infty}$ with compact support. Since $d(x,F)\geq r_{i}$ for $x \in \supp \,b_{i}$, we obtain
\begin{equation*} \int\sum_{i}|b_{i}|\,|\varphi|\,d\mu \leq
\Bigl(\int\sum_{i}\frac{|b_{i}|}{r_{i}}\,d\mu\Bigr)\,\sup_{x\in
M}\Bigl(d(x,F)|\varphi(x)|\Bigr)\quad
\end{equation*}
and
\begin {align*}
\int \frac{|b_{i}|}{r_{i}}d\mu
&=\int_{B_{i}}\frac{|f-f_{B_{i}}|}{r_{i}}\chi_{i}d\mu
\\
&\leq \Bigl(\mu(B_{i})\Bigr)^{\frac{1}{q'}}
\Bigl(\int_{B_{i}}|\nabla f|^{q} d\mu\Bigr)^{\frac{1}{q}}
\\
&\leq C\alpha\mu(B_{i}).
\end{align*}
We used the H\"{o}lder inequality, $(P_{q})$ and
the estimate (\ref{f}), $q'$ being the conjugate of $q$. Hence 
$ \displaystyle \int\sum_{i}|b_{i}||\varphi|d\mu \leq
C\alpha\mu(\Omega) \sup_{x\in M
}\Bigl(d(x,F)|\varphi(x)|\Bigr)$. Since $f\in L_ {1,loc}$, we deduce that $g\in L_{1,loc}$. (Note that since $b\in L_{1}$ in our case, we can say directly that $g\in L_{1,loc}$. However, for the homogeneous case --section 5-- we need this observation  to conclude that $g\in L_{1,loc}$.) 
It remains to prove (\ref{eg}). Note that $\displaystyle \sum_{i}\chi_{i}(x)=1$ and $\displaystyle \sum_{i}\nabla\chi_{i}(x)=0$  for all $x\in \Omega$. We have
\begin{align*}
\nabla g &= \nabla f -\sum_{i}\nabla b_{i}
\\
&=\nabla f-(\sum_{i}\chi_{i})\nabla f -\sum_{i}(f-f_{B_{i}})\nabla
\chi_{i}
\\
&=\ind_{F}(\nabla f) +\sum_{i}f_{B_{i}}\nabla \chi_{i}.
\end{align*}
From the definition of $F$ and the Lebesgue differentiation theorem, we have that $\ind_{F}(|f|+|\nabla f|)\leq \alpha\;\mu -$a.e.. We claim that a similar estimate holds for $h=\sum_{i}f_{B_{i}}\nabla \chi_{i}$. We have $|h(x)|\leq C\alpha$ for all $x\in M$. For this, note first that $h$ vanishes on $F$ and is locally finite on $\Omega$.
Then fix  $x\in \Omega$ and let $B_{j}$  be some Whitney ball containing $x$. For all $i\in I_{x}$, we have $|f_{B_{i}}-f_{B_{j}}|\leq Cr_{j} \alpha$.
Indeed, since $B_{i} \subset 7B_{j}$, we get
\begin{align}
|f_{B_{i}}-f_{7B_{j}}| &\leq
\frac{1}{\mu(B_{i})}\int_{B_{i}}|f-f_{7B_{j}}|d\mu\nonumber
\\
&\leq \frac{C}{\mu(B_{j})}\int_{7B_{j}}|f-f_{7B_{j}}|d\mu \nonumber
\\
&\leq Cr_{j}(\aver{7B_{j}}|\nabla f|^{q}d\mu)^{\frac{1}{q}} \nonumber 
\\
&\leq Cr_{j}\alpha  \label{g}
\end{align}
where we used  H\"{o}lder inequality, $(D)$, $(P_{q})$ and (\ref{f}). Analogously $|f_{7B_{j}}-f_{B_{j}}|\leq Cr_{j}\alpha$. Hence 
\begin{align*}
|h(x)| &=|\sum_{i\in I_{x}}(f_{B_{i}}-f_{B_{j}})\nabla \chi_{i}(x)|
\\
&\leq C\sum_{i\in I_{x}}|f_{B_{i}}-f_{B_{j}}|r_{i}^{-1}
\\
&\leq CN\alpha .
\end{align*}
From these estimates we deduce that $|\nabla g(x)|\leq C\alpha\; \mu- a.e.$. Let us now estimate $\| g \|_{\infty}$. We have $\displaystyle g=f\ind_{F}+\sum_{i}f_{B_{i}}\chi_{i}$. Since $|f|\ind_{F}\leq \alpha$, still need to estimate $\|\sum_{i}f_{B_{i}}\chi_{i}\|_{\infty}$. Note that
 \begin{align}
 |f_{B_{i}}|^{q}&\leq
  C\Bigl(\frac{1}{\mu(\overline{B_{i}})}\int_{\overline{B_{i}}}|f|d\mu \Bigr)^{q}\nonumber
\\
 &\leq \Bigl(\mathcal{M}(|f|+|\nabla f|)\Bigr)^{q}(y)\nonumber
 \\
 &\leq \mathcal{M}(|f|+|\nabla f|)^{q}(y)\nonumber
 \\
 &\leq \alpha^{q} \label{f_B}
 \end{align}
 where $y\in \overline{B_{i}}\cap F$ since $\overline{B_{i}}\cap F\neq \emptyset$. The second inequality follows from the fact that $(\mathcal{M}f)^{q}\leq \mathcal{M}f^{q}$ for $q\geq 1$.\\
Let $x\in \Omega$. Inequality (\ref{f_B}) and the fact that $\sharp I_{x}\leq N$ yield
 \begin{align*}
 |g(x)|&=|\sum_{i\in I_{x}}f_{B_{i}}|
 \\
 &\leq \sum_{i\in I_{x}}|f_{B_{i}}|
 \\
 &\leq N\alpha.
 \end{align*}
 We conclude that $\|g\|_{\infty} \leq C\,\alpha\quad \mu -a.e.$ and the proof of Proposition \ref{CZ} is therefore complete.
 \end{proof}
 \begin{rem} 1- It is a straightforward consequence of (\ref{eb}) that $b_{i}\in W_{r}^{1}$ for all $1\leq r\leq q$ with $\|b_{i}\|_{W_r^1}\leq C\alpha \mu(B_{i})^{\frac{1}{r}}$. \\
2- From the construction of the functions $b_{i}$, we see that $\sum_{i}b_{i}\in W_{p}^{1}$, with $\|\sum_{i}b_{i}\|_{W_{p}^{1}}\leq C \|f\|_{W_{p}^{1}}$.  It follows that $g\in W_{p}^{1}$. Hence $(g, |\nabla g|)$ satisfies the Poincar\'e inequality $(P_{p})$. Theorem 3.2 of \cite{hajlasz4} asserts that for $\mu-a.e. \; x,\,y \in M$
 $$
 |g(x)-g(y)|\leq  C d(x,y) \left((\mathcal{M}|\nabla g|^{p})^{\frac{1}{p}}(x)+(\mathcal{M}|\nabla g|^{p})^{\frac{1}{p}}(y)\right).
 $$
 From Theorem \ref{MIT} with $p=\infty$ and the inequality $\|\,|\nabla g|\,\|_{\infty}\leq C\alpha$, we deduce that $g$ has a Lipschitz representative. Moreover, the Lipschitz constant is controlled by $C\alpha$.
 \\
 3- We also deduce from this Calder\'on-Zygmund decomposition that $g\in W_{s}^{1}$ for $p\leq s\leq \infty$. We have  $\left(\int_{\Omega}(|g|^s+|\nabla g|^s)d\mu\right)^{\frac{1}{s}}\leq C\alpha \mu(\Omega)^{\frac{1}{s}}$ and 
 \begin{align*}\int_{F} (|g|^{s}+|\nabla g|^s) d\mu&=\int_{F}(|f|^{s}+|\nabla f|^{s})d\mu 
\\
&\leq \int_{F}(|f|^{p}|f|^{s-p}+|\nabla f|^{p}|\nabla f|^{s-p})d\mu \\
&\leq \alpha^{s-p} \|f\|_{W_{p}^{1}}^{p}<\infty.
\end{align*}
 \end{rem}
 
\begin{cor}\label{CCZ}
Under the same hypotheses as in the Calder\'{o}n-Zygmund lemma, we have
$$
 W_{p}^{1}\subset W_{r}^{1}+W_{s}^{1} \quad \textrm{for}
\; 1\leq r\leq q\leq p\leq s<\infty.
$$
\end{cor}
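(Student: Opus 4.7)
The plan is to take an arbitrary $f\in W_p^1$, fix any $\alpha>0$, and apply the Calder\'on--Zygmund decomposition of Proposition \ref{CZ} to write $f=g+b$ with $b=\sum_i b_i$. I would then verify separately that $b\in W_r^1$ and $g\in W_s^1$; since both spaces embed into $W_r^1+W_s^1$, this yields the desired inclusion (and in fact produces a concrete decomposition with norm control in $\alpha$).

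For $b\in W_r^1$, the bounded overlap (\ref{rb}) together with the support condition $\supp b_i\subset B_i$ reduces the task to controlling $\sum_i\|b_i\|_{W_r^1}^r$. Since $r\leq q$, H\"older's inequality applied to (\ref{eb}) gives
\[
\int_{B_i}(|b_i|^r+|\nabla b_i|^r)\,d\mu\leq C\alpha^r\mu(B_i),
\]
and summing while invoking (\ref{eB}) bounds $\|b\|_{W_r^1}^r$ by $C\alpha^{r-p}\|f\|_{W_p^1}^p<\infty$.

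For $g\in W_s^1$, essentially the argument is already sketched in Remark 3 following Proposition \ref{CZ}. Splitting $M=\Omega\cup F$, on $\Omega$ the pointwise bounds (\ref{eg}) combined with the measure estimate $\mu(\Omega)\leq C\alpha^{-p}\|f\|_{W_p^1}^p$ from (\ref{mO}) give $\int_\Omega(|g|^s+|\nabla g|^s)\,d\mu\leq C\alpha^{s-p}\|f\|_{W_p^1}^p$. On $F$, Lebesgue differentiation yields $|f|,|\nabla f|\leq\alpha$ $\mu$-a.e., so $|f|^s\leq\alpha^{s-p}|f|^p$ (and similarly for the gradient); since $g=f$ a.e.\ on $F$, this gives $\int_F(|g|^s+|\nabla g|^s)\,d\mu\leq\alpha^{s-p}\|f\|_{W_p^1}^p$.

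I do not anticipate any substantive obstacle: all the ingredients are already packaged into Proposition \ref{CZ} and its accompanying remarks. The one point deserving genuine care is the H\"older step in the estimate for $b$, which is precisely where the hypothesis $r\leq q$ is used; the rest is bookkeeping in the parameter $\alpha$, any positive value of which suffices for the claimed inclusion.
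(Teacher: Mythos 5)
Your proposal is correct and follows precisely the route the paper intends: the corollary is left without an explicit proof, but the two remarks that precede it supply the exact ingredients you use (Remark~1 gives $b_i\in W_r^1$ for $1\leq r\leq q$ via Jensen/H\"older on (\ref{eb}), and Remark~3 gives $g\in W_s^1$ for $p\leq s\leq\infty$ by splitting over $\Omega$ and $F$), after which summing the $b_i$ with the bounded overlap (\ref{rb}) and the measure bound (\ref{eB}) yields $b\in W_r^1$. Your identification of the role of $r\leq q$ in the H\"older step and of the freedom to pick any $\alpha>0$ are both accurate; this is essentially the paper's argument.
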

\begin{proof}[Proof of Theorem \ref{EK}]
To prove part 1., we begin applying Theorem \ref{IK}, part 1. We have
$$
K(f,t^{\frac{1}{r}},L_{r},L_{\infty})\sim\Bigl(\int_{0}^{t}(f^{*}(s))^{r}ds\Bigr)^\frac{1}{r}.
$$
On the other hand
\begin{equation*}
\begin{aligned}
\Bigl(\int_{0}^{t}f^{*}(s)^{r}ds\Bigr)^\frac{1}{r}
&=\Bigl(\int_{0}^{t}|f(s)|^{r*}ds\Bigr)^\frac{1}{r}
\\& = \Bigl(t|f|^{r**}(t)\Bigr)^{\frac{1}{r}}
\end{aligned}
 \end{equation*}
where in the first equality we used the fact that
$f^{*r}=(|f|^{r})^{*}$ and the second follows from the definition of $f^{**}$. We thus get $ K(f,t^{\frac{1}{r}},L_{r},L_{\infty})\sim 
t^{\frac{1}{r}} (|f|^{r**})^{\frac{1}{r}}(t)$.
Moreover, 
$$
K(f,t^{\frac{1}{r}}, W_{r}^{1}, W_{\infty}^{1})\geq K(f,t^{\frac{1}{r}},L_{r},L_{\infty})+K(|\nabla f|,t^{\frac{1}{r}},L_{r},L_{\infty})
$$
since the linear operator
$$
(I,\,\nabla): W_{s}^{1}(M)\rightarrow (L_{s}(M;\mathbb{C}\times TM))
$$
is bounded for every $1\leq s\leq \infty$. These two points yield the desired inequality.

We will now prove part 2.. We treat the case when $f\in W_{p}^{1},\, q\leq p<\infty$.
Let $t>0$. We consider the Calder\'{o}n-Zygmund decomposition of $f$ of Proposition \ref{CZ} with
$\alpha=\alpha(t)=\Bigl(\mathcal{M}(|f|+|\nabla f|)^{q}\Bigr)^{*\frac{1}{q}}(t)$.
We write $ \displaystyle f=\sum_{i}b_{i}+g=b+g $ where
$(b_{i})_{i},\,g$ satisfy the properties of the proposition. From the bounded overlap property of the $B_{i}$'s, it follows that for all $r\leq q$
\begin{align*}
\| b \|_{r}^{r}&\leq \int_{M}(\sum_{i}
|b_{i}|)^{r}d\mu
\\
&\leq N \sum_{i}\int_{B_{i}}
|b_{i}|^{r}d\mu
\\
&\leq
C\alpha^{r}(t)\sum_{i}\mu(B_{i})
\\
&\leq C\alpha^{r}(t)\mu(\Omega).
\end{align*}
Similarly we have $\| \,|\nabla b|\,\|_{r}\leq
C\alpha(t)\mu(\Omega)^{\frac{1}{r}}$. 

Moreover, since $(\mathcal{M}f)^{*}\sim f^{**}$ and $(f+g)^{**}\leq f^{**}+g^{**}$, we get $$
\alpha(t)=\left(\mathcal{M}(|f|+|\nabla f|)^{q}\right)^{*\frac{1}{q}}(t)\leq C\left(|f|^{q**{\frac{1}{q}}}(t)+|\nabla
f|^{q**{\frac{1}{q}}}(t)\right).
$$
Noting that $\mu(\Omega)\leq t$, we deduce that
 \begin{equation}\label{Kr}
 K(f,t^{\frac{1}{r}},W_{r}^{1},W_{\infty}^{1})\leq
Ct^{\frac{1}{r}}\left(|f|^{q**\frac{1}{q}}(t)+|\nabla
f|^{q**\frac{1}{q}}(t)\right)
\end{equation}
for all $t>0$ and obtain the desired inequality for $f\in W_p^1,\, q\leq p<\infty$.

Note that in the special case where $r=q$, we have the upper bound of $K$ for $f\in W_q^1$. Applying a similar argument to that of \cite{devore1} --Euclidean case-- we get (\ref{Kr}) for $f\in W_q^1+W_\infty$. Here we will omit the details.
\end{proof}
 We were not able to show this characterization when $r<q$ since we could not show its validity even for $f\in W_r^1$. Nevertheless this theorem is enough to achieve interpolation (see the next section).

\subsubsection{The local case}
Let $M$ be a complete non-compact Riemannian manifold satisfying a local doubling property $(D_ {loc})$ 
 and a local Poincar\'{e} inequality $(P_{qloc})$ for some $1\leq q<\infty$.
 
Denote by $\mathcal{M}_{E}$ the Hardy-Littlewood maximal operator relative to a measurable subset 
$E$ of $M$, that is, for  $x\,\in E$ and every locally integrable function $f$ on $M$
$$
\displaystyle\mathcal{M}_{E}f(x)= \sup_{B:\,x\in
B}\frac{1}{\mu(B\cap E)}\int_{B \cap E}|f|d\mu
$$
where $B$ ranges over all open balls of $M$ containing $x$ and centered in $E$.
We say that a measurable subset $E$ of $M$ has the relative doubling property if there exists a constant $C_{E}$ such that for all $x\in E$ and $r>0$ we have
$$
\mu(B(x,2r)\cap E)\leq C_{E}\mu(B(x,r)\cap E).
$$
This is equivalent to saying that the metric-measure space $(E,d|_E,\mu|_E)$ has the doubling property.
On such a set $\mathcal{M}_{E}$ is of weak type $(1,1)$ and bounded on $L_{p}(E,\mu),\,1<p\leq\infty$.
\begin{proof}[Proof of Theorem \ref{EK}] To fix ideas, we assume without loss of generality $r_{0}=5$, $r_{1}=8$.  The lower bound of $K$ is trivial (same proof as for the global case). It remains to prove the upper bound. 
 
For all $t>0$, take $\alpha=\alpha(t)=\Bigl(\mathcal{M}(|f|+|\nabla f|)^{q}\Bigr)^{*\frac{1}{q}}(t)$.
Consider 
$$
\Omega=\left\lbrace x\in M:\mathcal{M}(|f|+|\nabla
f|)^{q}(x)>\alpha^{q}(t)\right\rbrace.
$$
We have $\mu(\Omega)\leq t$. If $\Omega=M$  then
\begin{align*}
 \int_{M}|f|^{r}d\mu+\int_{M}|\nabla f|^{r}d\mu
 &=\int_{\Omega}|f|^{r}d\mu+\int_{\Omega}|\nabla f|^{r}d\mu
\\
&\leq \int_{0}^{\mu(\Omega)}|f|^{r*}(s)ds+
\int_{0}^{\mu(\Omega)}|\nabla f|^{r*}(s)ds
\\
&\leq \int_{0}^{t}|f|^{r*}(s)ds+
\int_{0}^{t}|\nabla f|^{r*}(s)ds
\\
&=t\left(|f|^{r**}(t)+|\nabla f|^{r**}(t)\right).
\end{align*}
Therefore
\begin{align*}
 K(f,t^{\frac{1}{q}},W_{r}^{1},W_{\infty}^{1})
 &\leq \| f \|_{W_{r}^{1}}
\\
&\leq Ct^{\frac{1}{r}}\left(|f|^{r**\frac{1}{r}}(t)+|\nabla f|^{r**\frac{1}{r}}(t)\right)
\\
&\leq Ct^{\frac{1}{r}}\Bigl(|f|^{q**\frac{1}{q}}(t)+|\nabla f|^{q**\frac{1}{q}}(t)\Bigr)
\end{align*}
since $r\leq q$. We thus obtain the upper bound in this case. 

Now assume $\Omega \neq M$. Pick a countable set
$\left\lbrace x_{j}\right\rbrace _{j\in J} \subset  M,$ such that $ M=
\underset{j\in J}{\bigcup}B(x_{j},\frac{1}{2})$ and for all $x\in M$,
$x$ does not belong to more than $N_{1}$ balls $B^{j}:=B(x_{j},1)$.
Consider a $C^{\infty}$ partition of unity $(\varphi_{j})_{j\in J}$ subordinated to the balls $\frac{1}{2}B^{j}$ such that $0\leq
\varphi_{j}\leq 1,\,\supp\varphi_{j}\subset B^{j}$  and
$\|\,|\nabla \varphi_{j}|\, \|_{\infty}\leq C$ uniformly with respect to $j$.
Consider $f\in W_{p}^{1}$, $ q\leq p<\infty$.
 Let $ f_{j}=f\varphi_{j}$ so that $ f=\sum_{j\in J}f_{j}$. We have for $j\in J$, $f_{j}\in L_{p}$ and $\;\nabla f_{j}=f\nabla
\varphi_{j}+\nabla f \varphi_{j} \in L_{p}$. Hence $f_{j}\in W_{p}^{1}(B^{j})$.
The balls $B^{j}$ satisfy  the relative doubling property with constant independent of the balls $B^{j}$. This follows from the next lemma quoted from \cite{auscher2} p.947.

\begin{lem}\label{DB}
Let $M$ be a  complete Riemannian manifold satisfying
$(D_ {loc})$. Then the balls $B^{j}$ above, 
equipped with the induced distance and measure, satisfy the
relative doubling property $(D)$, with the doubling constant that may be chosen independently of $j$.
More precisely, there exists $C\geq0$ such that for all $j\in J$
\begin{equation}
 \mu(B(x,2r)\cap B^{j})\leq
C\,\mu(B(x,r)\cap B^{j})\label{DB1}
\quad\forall x\in B^{j},\,r>0,
\end{equation}
and
\begin{equation}
\mu(B(x,r))\leq C\mu(B(x,r)\cap B^{j})\label{DB2}
\quad \forall x\in B^{j},\,0<r\leq 2.
 \end{equation}
\end{lem}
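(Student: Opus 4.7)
The plan is to establish (\ref{DB2}) first via a direct geometric construction, then to derive (\ref{DB1}) from (\ref{DB2}) together with $(D_{loc})$ in only a bounded number of doubling steps.

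For (\ref{DB2}), fix $x \in B^j$, set $\delta := d(x, x_j) < 1$, and let $0 < r \leq 2$. The goal is to exhibit a point $y \in M$ and a radius $s$ with $s \geq c\,r$, for some universal $c > 0$, such that $B(y, s) \subset B(x, r) \cap B^j$. If $r + \delta \leq 1$, the choice $y = x$, $s = r$ already gives $B(x, r) \subset B^j$. Otherwise, completeness of $M$ together with Hopf-Rinow provides a minimizing geodesic $\gamma$ from $x$ to $x_j$; placing $y = \gamma(\min(r/4,\delta))$, one checks via the triangle inequality (splitting into the subcases $r/4 \leq \delta$ and $r/4 > \delta$) that $s = r/8$ satisfies both $B(y, s) \subset B(x, r)$ and $B(y, s) \subset B(x_j, 1)$. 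Since $B(x, r) \subset B(y, r + d(x,y)) \subset B(y, Cs)$ with $C$ universal, and all relevant radii are bounded by a universal constant $\leq r_0 = 5$, a bounded number of iterations of $(D_{loc})$ yields $\mu(B(x, r)) \leq C'\mu(B(y, s)) \leq C'\mu(B(x, r) \cap B^j)$, as required.

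For (\ref{DB1}), fix $x \in B^j$ and $r > 0$. If $0 < r \leq 1$, then $2r \leq r_0$, so $(D_{loc})$ gives $\mu(B(x, 2r) \cap B^j) \leq \mu(B(x, 2r)) \leq C\,\mu(B(x, r))$, and (\ref{DB2}) supplies $\mu(B(x, r)) \leq C'\,\mu(B(x, r) \cap B^j)$, proving the claim. If $r \geq 1$, then $d(x,x_j) < 1$ implies $B^j \subset B(x, 2) \subset B(x, 2r)$, so $B(x, 2r) \cap B^j = B^j$; iterating $(D_{loc})$ a bounded number of times on radii $\leq 2 \leq r_0$ gives $\mu(B^j) \leq \mu(B(x, 2)) \leq C\,\mu(B(x, \min(r, 2)))$, and one last application of (\ref{DB2}) at radius $\min(r, 2)$ brings the upper bound inside $B(x, r) \cap B^j$.

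The principal obstacle is the construction in (\ref{DB2}) when $\delta$ is close to $1$ and $r$ is not small compared to $1 - \delta$: the naive inclusion $B(x, 1 - \delta) \subset B^j$ yields a ball arbitrarily smaller than $B(x, r)$, beyond repair by any fixed number of doubling iterations. The geodesic-step trick dodges this by walking the center from $x$ toward $x_j$, trading a controlled fraction of $r$ from the radius budget of $B(x, r)$ for the same amount of depth inside $B^j$, and thereby eliminating the dependence on the uncontrolled quantity $1 - \delta$.
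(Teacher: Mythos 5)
Your proof is correct, but there is no in-paper argument to compare it against: the paper quotes this lemma from \cite{auscher2}, p.~947, and does not reprove it. (Remark \ref{RB} records only that the cited proof uses nothing beyond the length-space structure of $M$.) Your argument is sound and essentially the natural one. The case split for (\ref{DB2}) is clean, and the geodesic-step observation you highlight -- sliding the center a distance $\min(r/4,\delta)$ along a minimizing geodesic toward $x_j$, which exists by completeness and Hopf--Rinow, so that $B(\gamma(\min(r/4,\delta)), r/8)\subset B(x,r)\cap B^j$ -- is precisely what removes the spurious dependence on $1-d(x,x_j)$; after that, $B(x,r)\subset B(\gamma(\cdot), 5r/4)$ and four applications of $(D_{loc})$ (all radii $\le 5r/4\le 5/2<r_0$) close (\ref{DB2}), and (\ref{DB1}) follows as you say from (\ref{DB2}) and one more doubling step (with the case $r\ge 2$ being trivial since then $B(x,r)\cap B^j=B^j$). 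This reasoning uses only the length-space property, in accordance with the paper's Remark \ref{RB}; a Riemannian-specific appeal to Hopf--Rinow is convenient but not essential, as $\epsilon$-geodesics would serve equally well.
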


\begin{rem}\label{RB}
Noting that the proof in \cite{auscher2} only used the fact that $M$ is a length space, we observe that Lemma \ref{DB} still holds for any length space. Recall that a length space $X$ is a metric space such that the distance between any two points $x,\,y \in X $ is equal to the infimum of the lengths of all paths joining $x$ to $y$ (we implicitly assume that there is at least one such path). Here a path from $x$ to $y$ is a continuous map $\gamma:[0,1]\rightarrow X$ with $\gamma(0)=x$ and $\gamma(1)=y$. 
\end{rem} 
\noindent Let us return to the proof of the theorem.
For any $x\in B^{j}$ we have
\begin{align}
 \mathcal{M}_{B^{j}}(|f_{j}|+|\nabla f_{j}|)^{q}(x)
 &=\sup_{B:\,x\in B,\,r(B)\leq 2}\frac{1}{\mu(B^{j}\cap
B)}\int_{B^{j}\cap B}(|f_{j}|+|\nabla f_{j}|)^{q}d\mu \nonumber
\\
&\leq \sup_{B:\,x\,\in B,\;r(B)\leq 2}C\frac{\mu(B)}{\mu(B^{j}\cap B)}\frac{1}{\mu(B)}\int_{B}(|f|
+|\nabla f|)^{q}d\mu \nonumber
\\
&\leq C\mathcal{M}(|f|+|\nabla f|)^{q}(x)\label{MB}
\end{align}
where we used (\ref{DB2}) of Lemma \ref{DB}. Consider now 
$$
\Omega_{j}=\left\lbrace x\in
B^{j}:\mathcal{M}_{B^{j}}(|f_{j}|+|\nabla
f_{j}|)^{q}(x)>C\alpha^{q}(t)\right\rbrace
$$
 where $C$ is the constant in
(\ref{MB}). $\Omega_{j}$ is an open subset of $B^{j}$, hence of $M$, and $\Omega_{j}\subset \Omega\neq M $ for all $j \in J$. 
For the $f_{j}$'s, and for all $t>0$, we have a Calder\'{o}n-Zygmund decomposition similar to the one done in Proposition \ref{CZ}:
there exist $b_{jk},\;g_{j}$ supported in $B^{j}$, and balls $(B_{jk})_{k}$ of $M$, contained in $\Omega_{j}$, such that
\begin{equation}
f_{j} = g_{j}+\sum_{k}b_{jk} \label{f1}
\end{equation}
\begin{equation}
|g_{j}(x)|\leq C\alpha(t) \textrm{   and } \,|\nabla g_{j}(x)|\leq
C\alpha(t) \quad \textrm{for } \mu -a.e. \, x \in M \label{eg1}
\end{equation}
\begin{equation}
\supp b_{jk}\subset B_{jk}, \textrm { for } 1\leq r\leq q\, \int_{B_{jk}}(|b_{jk}|^{r}+ |\nabla b_{jk}|^{r})d\mu
\leq C\alpha^{r}(t)\mu(B_{jk})\label{eb1}
\end{equation}
\begin{equation}
\sum_{k}\mu(B_{jk})\leq C\alpha^{-p}(t)\int_{B^{j}} (|f_{j}|+|\nabla
f_{j}|)^{p}d\mu \label{emB1}
\end{equation}
\begin{equation}
\sum_{k}\chi_{B_{jk}}\leq N \label{eiB1}
\end{equation}
with $C$ and $N$ depending only on $q$, $p$ and the constants in $(D_{loc})$ and $(P_{qloc})$. The proof of this decomposition will be the same as in Proposition \ref{CZ}, taking for all $j\in J$ a Whitney decomposition  $(B_{jk})_{k}$ of $\Omega_{j}\neq M$ and using the doubling property for balls whose radii do not exceed $3<r_{0}$ and the Poincar\'{e} inequality for balls whose radii do not exceed $7<r_{1}$. For the bounded overlap property (\ref{eiB1}), just note that the radius of every ball $B_{jk}$ is less than 1. Then apply the same argument as for the bounded overlap property of a Whitney decomposition for an homogeneous space, using the doubling property for balls with sufficiently small radii.  

By the above decomposition we can write $ f=\sum\limits _{j\in J}\sum\limits_{k}b_{jk}+\sum\limits _{j\in J}g_{j}=b+g$.
Let us now estimate $\|b\|_{W_{r}^{1}}$ and $\|g\|_{W_{\infty}^{1}}$. 
\begin{align*} 
\|b\|_{r}^{r}&\leq N_{1}N\sum_{j}
\sum_{k}\|b_{jk}\|_{r}^{r}
\\
&\leq C\alpha^{r}(t) \sum_{j}\sum_{k}(\mu(B_{jk}))
\\
&\leq NC\alpha^{r}(t) \Bigl(\sum_{j}\mu(\Omega_{j})\Bigr)
\\
&\leq N_{1}C\alpha^{r}(t) \mu(\Omega).
\end{align*}
We used the bounded overlap property of the $(\Omega_{j})_{j\in J}$'s and that of the 
$(B_{jk})_{k}$'s for all $j\in J$. 
It follows that $\| b\|_{r}\leq C\alpha(t)\mu(\Omega)^{\frac{1}{r}}$.
Similarly we get $\|\,|\nabla b|\, \|_{r}\leq C\alpha(t)\mu(\Omega)^{\frac{1}{r}}$.

For $g$ we have
\begin{align*}
\|g \|_{\infty}& \leq \sup_{x}\sum_{j \in J}|g_{j}(x)|
\\
& \leq \sup_{x}N_{1}\sup_{j\in J}|g_{j}(x)|
\\
&\leq N_{1}\sup_{j\in J}\|
g_{j}\|_{\infty}
\\
&\leq C\alpha(t).
\end{align*}
Analogously $\| \,|\nabla g|\,\|_{\infty}\leq
C\alpha(t)$. We conclude that
\begin{align*}
 K(f,t^{\frac{1}{r}},W_{r}^{1},W_{\infty}^{1})
&\leq \| b \|_{W_{r}^{1}}+t^{\frac{1}{r}}\|g\|_{W_{\infty}^{1}}
\\
&\leq C
\alpha(t)\mu(\Omega)^{\frac{1}{r}}+Ct^{\frac{1}{r}}\alpha(t)
\\
&\leq Ct^{\frac{1}{r}}\alpha(t)
\\
&\sim Ct^{\frac{1}{r}}(|f|^{q**\frac{1}{q}}(t)+|\nabla
f|^{q**\frac{1}{q}}(t))
\end{align*}
which completes the proof of Theorem \ref{EK} in the case $r<q$. When $r=q$ we get the characterization of $K$ for every $f\in W_q^1+W_\infty^1$ by applying again a similar argument to that of \cite{devore1}.
\end{proof}
\
\section{Interpolation Theorems}\label{TI} In this section we establish our interpolation Theorem \ref{IS} and some  consequences for non-homogeneous Sobolev spaces on a complete non-compact Riemannian manifold $M$ satisfying $(D_{loc})$ and $(P_{qloc})$ for some $1\leq q< \infty$.

For $1\leq r\leq q<p<\infty$, we define the interpolation space $W_{p,r}^{1}$ between $ W_{r}^{1}$ and $ W_{\infty}^{1}$ by
$$
W_{p,r}^{1}=(W_{r}^{1},W_{\infty}^{1})_{1-\frac{r}{p},p}.
$$
From the previous results we know that 
$$
C_{1}\left\lbrace\int_{0}^{\infty}\left(t^{\frac{1}{p}}(|f|^{r**\frac{1}{r}}+|\nabla f|^{r**\frac{1}{r}})(t)\right)^{p}\frac{dt}{t}\right\rbrace^{\frac{1}{p}}\leq\|f\|_{1-\frac{r}{p},p}\leq C_{2}\left\lbrace\int_{0}^{\infty}\left(t^{\frac{1}{p}}(|f|^{q**\frac{1}{q}}+|\nabla f|^{q**\frac{1}{q}})(t)\right)^{p}\frac{dt}{t}\right\rbrace^{\frac{1}{p}}.
$$

 We claim that $W_{p,r}^{1}= W_{p}^{1}$, with equivalent norms. Indeed,
 \begin{align*}
\|f\|_{1-\frac{r}{p},p}&\geq C_{1} \left\lbrace\int_{0}^{\infty}\left(|f|^{r**\frac{1}{r}(t)}+|\nabla f|^{r**\frac{1}{r}}(t)\right)^{p}dt\right\rbrace^{\frac{1}{p}}
\\
&\geq C\left( \|f^{r**}\|_{\frac{p}{r}}^{\frac{1}{r}}
+\| |\nabla f|^{r**} \|_{\frac{p}{r}}^{\frac{1}{r}}\right)
\\
&\geq C\left(\|f^{r}\|_{\frac{p}{r}}^{\frac{1}{r}}
+\| \,|\nabla f|^{r}\,\|_{\frac{p}{r}}^{\frac{1}{r}}\right)
\\
&=C\left(\|f\|_{p} + \| \,|\nabla f|\, \|_{p}\right)
\\
&=C \| f\|_{W_{p}^{1}},
\end{align*}
and 
\begin{align*}
\|f\|_{1-\frac{r}{p},p}&\leq C_{2} \left\lbrace\int_{0}^{\infty}\left(|f|^{q**\frac{1}{q}}(t)+|\nabla f|^{q**\frac{1}{q}}(t)\right)^{p}dt\right\rbrace^{\frac{1}{p}}
\\
&\leq C\left( \|f^{q**}\|_{\frac{p}{q}}^{\frac{1}{q}}
+\|\, |\nabla f|^{q**}\, \|_{\frac{p}{q}}^{\frac{1}{q}}\right)
\\
&\leq C\left(\|f^{q}\|_{\frac{p}{q}}^{\frac{1}{q}}
+\|\, |\nabla f|^{q}\,\|_{\frac{p}{q}}^{\frac{1}{q}}\right)
\\
&=C\left(\|f\|_{p} + \|\, |\nabla f| \,\|_{p}\right)
\\
&=C \| f\|_{W_{p}^{1}},
\end{align*}
where we used that for $l>1$, $\|f^{**}\|_{l}\sim \|f\|_{l}$ (see \cite{stein3}, Chapter V: Lemma 3.21 p.191 and Theorem 3.21, p.201). Moreover, from Corollary \ref{CCZ}, we have $W_{p}^{1}\subset
 W_{r}^{1}+W_{\infty}^{1}$ for $ r<p<\infty$. Therefore $W_{p}^{1}$ is an interpolation space between $W_{r}^{1}$
 and  $W_{\infty}^{1}$ for $\,r<p<\infty$.
 
Let us recall some known facts about Poincar\'{e} inequalities with varying $q$.
 \\
It is known that $(P_{qloc})$ implies $(P_{ploc})$ when $p\geq q$ (see \cite{hajlasz4}). Thus if the set of $q$ such that $(P_{qloc})$ holds is not empty, then it is an interval unbounded on the right. A recent result of Keith and Zhong \cite{keith3} asserts that this interval is open in $[1,+\infty[$.
\begin{thm}\label{kz} Let $(X,d,\mu)$ be a complete metric-measure space with $\mu$ locally doubling
and admitting a local Poincar\'{e} inequality $(P_{qloc})$, for  some $1< q<\infty$.
Then there exists $\epsilon >0$ such that $(X,d,\mu)$ admits
$(P_{ploc})$ for every $p>q-\epsilon$. 
\end{thm}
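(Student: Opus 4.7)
The plan is to establish the self-improvement by deriving a pointwise Haj\l asz-type inequality at an exponent $\sigma$ slightly below $q$. Concretely, for $\sigma \in (q-\epsilon, q)$ with $\epsilon>0$ to be chosen, the goal is to show that for $\mu$-a.e.\ Lebesgue point $x$ of a ball $B$ of radius $R < r_0$,
\begin{equation*}
|u(x)-u_B| \leq C R \left(\aver{\lambda B}|\nabla u|^\sigma\, d\mu\right)^{1/\sigma}.
\end{equation*}
Integrating in $x$ over $B$ and applying Hardy--Littlewood on $\lambda B$ then yields $(P_{\sigma loc})$ for every $\sigma > q - \epsilon$.

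First I would set up the usual telescoping chain: pick balls $B_k \ni x$ of radius $r_k \sim 2^{-k} R$ shrinking to $x$, so that $u(x)-u_B = \sum_k (u_{B_{k+1}} - u_{B_k})$ and $(P_{qloc})$ gives
\begin{equation*}
|u(x)-u_B| \leq C \sum_{k\geq 0} r_k \Bigl(\aver{\lambda B_k}|\nabla u|^q\, d\mu\Bigr)^{1/q}.
\end{equation*}
Naively bounding each term by $R\,\mathcal{M}(|\nabla u|^q)^{1/q}(x)$ merely recovers $(P_{qloc})$; the improvement must come from a scale-by-scale stopping-time analysis.

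The heart of the argument, and the step I expect to be the main obstacle, is a good-$\lambda$ mechanism that controls the \emph{bad scales} at which $(\aver{\lambda B_k}|\nabla u|^q)^{1/q}$ substantially exceeds $(\aver{\lambda B}|\nabla u|^\sigma)^{1/\sigma}$. For each $x$ one isolates these bad scales and, via a Whitney-type covering at a well-chosen level set of $\mathcal{M}(|\nabla u|^\sigma)$ combined with $(D_{loc})$ and a reapplication of $(P_{qloc})$ itself, shows that their total contribution to the telescoping sum is absorbed provided $q - \sigma < \epsilon$. Forming the required chains between level sets requires the length-space structure of $X$, which is available because a complete doubling space admitting a Poincar\'e inequality is quasi-convex by the Semmes theorem. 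The remaining \emph{good} scales contribute at most $R (\aver{\lambda B}|\nabla u|^\sigma)^{1/\sigma}$ by construction, delivering the target pointwise bound.

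Finally, the admissible $\epsilon$ is extracted from the geometric-series constants of the iteration and depends only on $q$, the local doubling constant, and the $(P_{qloc})$ constant, which is the right quantitative behaviour. The localisation is painless: every ball appearing in the argument has radius at most a fixed multiple of $R < r_0$, and taking $R$ small enough compared to both $r_0$ and $r_1$ keeps every invocation of doubling and Poincar\'e legal. I expect the main difficulty to lie entirely in the bad-scale covering step, since the chain geometry and overlap bounds are where the self-improvement actually happens — the telescoping and the final Hardy--Littlewood integration are routine.
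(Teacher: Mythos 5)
Your first reduction is already not attainable. The proposed pointwise bound
\begin{equation*}
|u(x)-u_B|\leq CR\Bigl(\aver{\lambda B}|\nabla u|^{\sigma}\,d\mu\Bigr)^{1/\sigma}\qquad\text{for }\mu\text{-a.e.\ }x\in B,
\end{equation*}
with the right-hand side independent of $x$, is a sup-Poincar\'e inequality, and that is strictly stronger than $(P_{\sigma})$: it can only hold when $\sigma$ exceeds the local homogeneous dimension of the space. Already on $\mathbb{R}^{n}$, which satisfies $(D)$ and $(P_{1})$, a $W^{1,\sigma}$ function with $\sigma<n$ may fail to be locally bounded, so the left side is unbounded on $B$ while the right side is a finite constant; the estimate is simply false. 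The honest outcome of the telescoping step is $|u(x)-u_{B}|\leq CR\,\bigl(\mathcal{M}_{\lambda B}|\nabla u|^{q}\bigr)^{1/q}(x)$, and lowering the exponent inside the maximal function from $q$ to some $\sigma<q$ \emph{is} the theorem, not a preprocessing step. Moreover, once the maximal function is reinstated, your closing move ``integrate in $x$ and apply Hardy--Littlewood'' would need $\mathcal{M}$ bounded on $L^{1}$, which it is not.

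The deferred ``good-$\lambda$ mechanism controlling the bad scales'' is where the entire content of the theorem lives, and it is not a routine stopping-time argument: the open-endedness of the Poincar\'e condition was a notorious open problem for more than a decade precisely because such Whitney-covering/good-$\lambda$ schemes do not close by themselves, and Keith--Zhong had to introduce a genuinely new iterative (``pivotal'') absorption estimate to make it work. The paper makes no attempt to reprove this: it invokes the global theorem of Keith--Zhong \cite{keith3} as a black box, and the only thing it supplies under the theorem statement is the one-line observation that running the same argument on balls of radius below a threshold $r_{2}<\min(r_{0},r_{1})$ only ever invokes $(D_{loc})$ and $(P_{qloc})$ on balls where those hypotheses are legitimate, which yields $(P_{(q-\epsilon)loc})$. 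That localization remark is the part you would actually need to write down; the hard analytic core should be cited, not re-derived in a sketch that stops at its most difficult step.
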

Here, the definition of $(P_{qloc})$ is that of section 7. It reduces to the one of section 3 when the metric space is a Riemannian manifold.
\begin{proof}[Comment on the proof of this theorem] The proof goes as in \cite{keith3} where this theorem is proved for $X$ satisfying $(D)$ and admitting a global Poincar\'{e} inequality $(P_{q})$. By using the same argument and choosing sufficiently small radii for the considered balls, $(P_{qloc})$ will give us $(P_{(q-\epsilon)loc})$ for every ball of radius less than $r_{2}$, for some $r_{2}<\min(r_{0},r_{1})$, $r_{0},\,r_{1}$ being the constants given in the definitions of local doubling property and local Poincar\'{e} inequality.
\end{proof}
 Define $A_{M}=\left\lbrace q \in [1,\infty[:(P_{qloc})\textrm{ holds }\right\rbrace$ and $q_{0_{M}}=\inf A_{M}$. When no confusion arises, we write $q_{0}$ instead of $q_{0_{M}}$.
 As we mentioned in the introduction, this improvement of the exponent of a Poincar\'e inequality together with the reiteration theorem yield another version of our interpolation result: Corollary \ref{CIS}.
\begin{proof}[Proof of Corollary \ref{CIS}]  Let $0<\theta<1$ such that $\frac{1}{p}=\frac{1-\theta}{p_1}+\frac{\theta}{p_{2}}$.
 \begin{itemize}
\item[1.] \textit{Case when $p_{1}>q_{0}$}. Since $p_{1}>q_{0}$, there exists $q\in A_{M}$ such that $q_{0}<q< p_{1}$. Then $1-\frac{q}{p}=(1-\theta)(1-\frac{q}{p_{1}}) +\theta (1-\frac{q}{p_{2}})$.
 The reiteration theorem --\cite{bennett}, Theorem 2.4 p.110-- yields 
 \begin{align*}
(W_{p_{1}}^{1}  ,W_{p_{2}}^{1})_{\theta,p}&=(W_{p_{1},q}^{1} ,W_{p_{2},q}^{1})_{\theta,p}\\
&= (W_{q}^{1} ,W_{\infty}^{1})_{1-\frac{q}{p},p}
 \\
 &=W_{p,q}^{1}
 \\
 &= W_{p}^{1}.
\end{align*}
\item[2.]{Case when $1\leq p_{1}\leq q_{0}$}. Let $\theta'=\theta (1-\frac{p_1}{p_{2}})=1-\frac{p_1}{p}$. The reiteration theorem applied this time only to the second exponent yields
\begin{align*}
(W_{p_{1}}^{1}  ,W_{p_{2}}^{1})_{\theta,p}&=(W_{p_{1}}^{1}  ,W_{p_{2},p_{1}}^{1})_{\theta,p}\\
&= (W_{p_1}^{1} ,W_{\infty}^{1})_{\theta',p}
 \\
 &=W_{p,p_1}^{1}
 \\
 &= W_{p}^{1}.
 \end{align*}
 \end{itemize}
\end{proof}
 \begin{thm} Let $M$ and $N$ be two complete non-compact Riemannian manifolds satisfying
 $(D_{loc})$. Assume that $q_{0_{M}}$ and $q_{0_{N}}$ are well defined. Take $1\leq p_{1}\leq p_{2}\leq \infty,\,
 1\leq  r_{1},\,r,\, r_{2}\leq \infty$.
 Let $T$ be a bounded linear operator from
  $ W_{p_{i}}^{1}(M)$ to $ W_{r_{i}}^{1}(N)$ of norm $L_{i},\;
  i=1,2$. Then for every couple $(p,r)$  such that $p\leq r$, $p>q_{0_{M}}$, $r>q_{0_{N}}$  and $ (\frac{1}{p},\frac{1}{r})= (1-\theta)(\frac{1}{p_{1}},\frac{1}{r_{1}})+
  \theta  (\frac{1}{p_{2}},\frac{1}{r_{2}})$, $0<\theta<1$, $T$ is  bounded from $ W_{p}^{1}(M)$ to $
W_{r}^{1}(N)$ with norm $L\leq
CL_{0}^{1-\theta}L_{1}^{\theta}$.
 \end{thm}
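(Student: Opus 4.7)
The plan is to reduce the statement to the classical exact interpolation theorem for the real $K$-method by using Corollary \ref{CIS} on both manifolds to re-express the endpoint Sobolev spaces as real interpolation spaces, and then to absorb the asymmetry $p \leq r$ via the standard monotonicity of $K$-interpolation spaces in the second index.

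First, without loss of generality I would discard the trivial cases in which the interpolation degenerates on one side (e.g.\ $p_{1}=p_{2}$ forces $p=p_{1}$, and similarly for $r$; also $p=\infty$ or $r=\infty$ forces both endpoint indices to coincide with $\infty$). So assume $p_{1}<p_{2}\leq\infty$, $r_{1}<r_{2}\leq\infty$, and $p,r<\infty$. The hypothesis $(1/p,1/r)=(1-\theta)(1/p_{1},1/r_{1})+\theta(1/p_{2},1/r_{2})$ gives
\[
\frac{1}{p}=\frac{1-\theta}{p_{1}}+\frac{\theta}{p_{2}},\qquad \frac{1}{r}=\frac{1-\theta}{r_{1}}+\frac{\theta}{r_{2}},
\]
with one and the same $\theta\in(0,1)$.

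Next, since $p>q_{0_{M}}$, Corollary \ref{CIS} applied to $M$ yields $W^{1}_{p}(M)=(W^{1}_{p_{1}}(M),W^{1}_{p_{2}}(M))_{\theta,p}$ with equivalent norms; likewise $r>q_{0_{N}}$ and the second identity for $\theta$ above give $W^{1}_{r}(N)=(W^{1}_{r_{1}}(N),W^{1}_{r_{2}}(N))_{\theta,r}$. Now the classical exact interpolation theorem of exponent $\theta$ for the $K$-method (\cite{bergh}, Chapter 3) applied to $T$, which is bounded $W^{1}_{p_{i}}(M)\to W^{1}_{r_{i}}(N)$ with norm $L_{i}$ for $i=1,2$, yields
\[
T\colon (W^{1}_{p_{1}}(M),W^{1}_{p_{2}}(M))_{\theta,p}\longrightarrow (W^{1}_{r_{1}}(N),W^{1}_{r_{2}}(N))_{\theta,p}
\]
with norm at most $L_{1}^{1-\theta}L_{2}^{\theta}$. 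Note that the second index is $p$ on both sides: this is where the usual exact interpolation theorem applies cleanly.

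Finally I would invoke the standard continuous embedding $(B_{0},B_{1})_{\theta,q_{0}}\hookrightarrow(B_{0},B_{1})_{\theta,q_{1}}$ valid for any compatible couple when $q_{0}\leq q_{1}$ (see \cite{bergh}, Theorem 3.4.1). Applied to $B_{i}=W^{1}_{r_{i}}(N)$ with $q_{0}=p\leq r=q_{1}$, this gives $(W^{1}_{r_{1}}(N),W^{1}_{r_{2}}(N))_{\theta,p}\hookrightarrow(W^{1}_{r_{1}}(N),W^{1}_{r_{2}}(N))_{\theta,r}=W^{1}_{r}(N)$. Composing with the previous step and with the identification $W^{1}_{p}(M)=(W^{1}_{p_{1}}(M),W^{1}_{p_{2}}(M))_{\theta,p}$, I obtain $T\colon W^{1}_{p}(M)\to W^{1}_{r}(N)$ with operator norm $\leq CL_{1}^{1-\theta}L_{2}^{\theta}$, $C$ coming from the equivalences of norms in the two applications of Corollary \ref{CIS} together with the embedding constant.

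The only subtlety, and hence the main obstacle, is the mismatch $p\leq r$ between the two second-parameters prescribed by Corollary \ref{CIS} on $M$ and $N$; this is precisely what forces the assumption $p\leq r$ and what is resolved by the monotonicity of the $K$-interpolation functor in the second index. Everything else (linearity of $T$, the use of Corollary \ref{CIS}, the exact interpolation theorem) is automatic once one is in the real interpolation framework.
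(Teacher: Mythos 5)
Your proof is correct and follows essentially the same route as the paper: identify the Sobolev spaces via Corollary \ref{CIS}, apply the exact interpolation theorem for the $K$-method, and close the gap $p\leq r$ using monotonicity of the real method in the second parameter. The only (immaterial) difference is that you place the monotonicity embedding on the range side, $(W^{1}_{r_{1}}(N),W^{1}_{r_{2}}(N))_{\theta,p}\hookrightarrow(W^{1}_{r_{1}}(N),W^{1}_{r_{2}}(N))_{\theta,r}$, whereas the paper uses the equivalent embedding on the domain side, $(W^{1}_{p_{1}}(M),W^{1}_{p_{2}}(M))_{\theta,p}\subset(W^{1}_{p_{1}}(M),W^{1}_{p_{2}}(M))_{\theta,r}$.
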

 \begin{proof}
  \begin{align*}
  \|Tf\|_{W_{r}^{1}(N)} &\leq C \|Tf\|_{(W_{r_{1}}^{1}(N),W_{r_{2}}^{1}(N))_{\theta,r}}
 \\
 &\leq CL_{0}^{1-\theta}L_{1}^{\theta}\|f\|_{(W_{p_{1}}^{1}(M),W_{p_{2}}^{1}(M))_{\theta,r}}
 \\
 &\leq CL_{0}^{1-\theta}L_{1}^{\theta}\|f\|_{(W_{p_{1}}^{1}(M),W_{p_{2}}^{1}(M))_{\theta,p}}
 \\
 &\leq CL_{0}^{1-\theta}L_{1}^{\theta}\|f\|_{W_{p}^{1}(M)}.
 \end{align*}
We used the fact that $K_{\theta,q}$ is an exact interpolation functor of exponent $\theta$, that $W_{p}^{1}(M)=(W_{p_{1}}^{1}(M),W_{p_{2}}^{1}(M))_{\theta,p}$, $W_{r}^{1}(N)=(W_{r_{1}}^{1}(N),W_{r_{2}}^{1}(N))_{\theta,r}$ with equivalent norms and that $(W_{p_{1}}^{1}(M),W_{p_{2}}^{1}(M))_{\theta,p}\subset (W_{p_{1}}^{1}(M),W_{p_{2}}^{1}(M))_{\theta,r}$ if $p\leq r$.
\end{proof}
\begin{rem}\label{C} Let $M$ be a Riemannian manifold, not necessarily complete, satisfying $(D_{loc})$. Assume that for some $1\leq q<\infty$, a weak local Poincar\'{e} inequality holds for all $C^{\infty}$ functions, that is there exists $r_{1}>0,\, C=C(q,r_{1}),\,\lambda \geq 1$ such that for all $f\in C^{\infty}$ and all ball $B$ of radius $r<r_{1}$ we have
 \begin{equation*} 
\Bigl(\aver{B}|f-f_{B}|^{q}d\mu\Bigr)^{\frac{1}{q}}\leq Cr \Bigl(\aver{\lambda B}|\nabla f|^{q}d\mu\Bigr)^{\frac{1}{q}}. 
\end{equation*}
 Then, we obtain the characterization of $K$ as in Theorem \ref{EK} and we get by interpolating a result analogous to Theorem \ref{IS}. 
 \end{rem}
  \section{Homogeneous Sobolev spaces on Riemannian manifolds}
\begin{dfn} Let $M$ be a $C^{\infty}$ Riemannian manifold of dimension $n$.
For $1\leq p\leq \infty$, we define $\overset{.}{E_{p}^{1}}$ to be the vector space of distributions $ \varphi $
 with $|\nabla \varphi |\in L_{p}$, where $\nabla \varphi$ is the distributional gradient of $ \varphi$. It is well known that the elements of $\overset{.}{E_{p}^{1}}$ are in  $L_{ploc} $. We equip $\overset{.}{E_{p}^{1}}$  with the semi norm 
 $$ 
 \|\varphi\|_{\overset{.}{E_{p}^{1}}}=\|\,|\nabla \varphi|\,\|_{p}.
 $$
 \end{dfn}
 \begin{dfn} We define the homogeneous Sobolev space $\overset{.}{W_{p}^{1}}$ as the quotient space $\overset{.}{E_{p}^{1}}/\mathbb{R}$.
\end{dfn}
\begin{rem} For all $\varphi \in \overset{.}{E_{p}^{1}}$, \, $\|\overline{\varphi}\|_{\overset{.}{W_{p}^{1}}}=\|\,|\nabla \varphi|\,\|_{p}$, where $\overline{\varphi}$ denotes the class of $\varphi$.
\end{rem}
\begin{prop}\label{DBL}
1. (\cite{goldshtein}) $\overset{.}{W_{p}^{1}}$ is a Banach space.\\
2. Assume that $M$ satisfies $(D)$ and $(P_{q})$ for some $1\leq q<\infty$ and for all $f\in Lip$, that is there exists a constant $C>0$ such that for all $f\in Lip$ and for every ball $B$ of $M$ of radius $r>0$ we have
\begin{equation*} \tag{$P_{q}$}
\left(\aver{B}|f-f_{B}|^{q} d\mu\right)^{\frac{1}{q}} \leq C r \left(\aver{B}|\nabla f|^{q}d\mu\right)^{\frac{1}{q}}.
\end{equation*}            
Then $Lip(M)\cap\overset{.}{W_{p}^{1}}$ is dense in $\overset{.}{W_{p}^{1}}$ for $q\leq p<\infty$.
 \end{prop}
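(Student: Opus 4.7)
Part~1 is Goldshtein's theorem and I would treat it only briefly: given a Cauchy sequence of classes $(\bar f_n)$ in $\overset{.}{W_p^1}$, one normalizes representatives so that $\aver{B_0}f_n\,d\mu=0$ on some fixed ball $B_0$, uses the classical fact that $(P_q)$ implies $(P_p)$ for $p\geq q$ to show $(f_n)$ is Cauchy in $L_{p,\mathrm{loc}}$, and extracts a limit $f$ with $|\nabla f|\in L_p$ in the distributional sense.

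For Part~2, my plan is to produce a one-parameter family of Lipschitz approximants through a \emph{homogeneous} variant of the Calder\'on--Zygmund decomposition of Proposition~\ref{CZ}. Fix a representative $f\in\overset{.}{E_p^1}$ and, for each $\alpha>0$, consider
\begin{equation*}
\Omega_\alpha=\{x\in M:\mathcal{M}(|\nabla f|^{q})(x)>\alpha^{q}\}.
\end{equation*}
Since $|\nabla f|^{q}\in L_{p/q}$ with $p/q\geq 1$, Theorem~\ref{MIT} yields $\mu(\Omega_\alpha)\leq C\alpha^{-p}\|\,|\nabla f|\,\|_p^p<\infty$, so in particular $\mu(\Omega_\alpha)\to 0$ as $\alpha\to\infty$. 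For large $\alpha$ we have $\Omega_\alpha\neq M$ because $\mu(M)=\infty$. I would then run the proof of Proposition~\ref{CZ} essentially verbatim on a Whitney decomposition $(B_i)$ of $\Omega_\alpha$, replacing every occurrence of $|f|+|\nabla f|$ by $|\nabla f|$. The parenthetical observation inside the proof of Proposition~\ref{CZ} ensures that $g_\alpha:=f-\sum_i b_i$ still lies in $L_{1,\mathrm{loc}}$, and the remark following that proposition (Hajlasz's pointwise inequality combined with $\|\,|\nabla g_\alpha|\,\|_\infty\leq C\alpha$) produces a Lipschitz representative of $g_\alpha$.

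Two things then remain. First, $g_\alpha\in\Lip(M)\cap\overset{.}{W_p^1}$: since $\nabla g_\alpha=\nabla f$ on $F_\alpha:=M\setminus\Omega_\alpha$ and $|\nabla g_\alpha|\leq C\alpha$ on $\Omega_\alpha$,
\begin{equation*}
\|\,|\nabla g_\alpha|\,\|_p^p\leq \|\,|\nabla f|\,\|_p^p+C\alpha^p\mu(\Omega_\alpha)<\infty.
\end{equation*}
Second, $\bar g_\alpha\to\bar f$ in $\overset{.}{W_p^1}$: using that $(P_q)$ entails $(P_p)$ for $p\geq q$, I estimate on each Whitney ball
\begin{equation*}
\int_{B_i}|\nabla b_i|^{p}\,d\mu\leq C\int_{B_i}|\nabla f|^{p}\,d\mu+\frac{C}{r_i^{p}}\int_{B_i}|f-f_{B_i}|^{p}\,d\mu\leq C\int_{B_i}|\nabla f|^{p}\,d\mu,
\end{equation*}
and sum using the bounded overlap $\sum_i\chi_{B_i}\leq N$:
\begin{equation*}
\|\,|\nabla(f-g_\alpha)|\,\|_p^p\leq CN\int_{\cup_i B_i}|\nabla f|^{p}\,d\mu,
\end{equation*}
which tends to $0$ as $\alpha\to\infty$ by absolute continuity of the Lebesgue integral, since $\mu(\bigcup_i B_i)\leq CN\mu(\Omega_\alpha)\to 0$ and $|\nabla f|^{p}\in L_1$.

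The main obstacle I expect is the rigorous transfer of Proposition~\ref{CZ} to the homogeneous setting: without any a priori $L_p$ (or even $L_q$) control on $f$ itself, the verification that $g_\alpha\in L_{1,\mathrm{loc}}$ rests on the auxiliary bound $\int|b_i|/r_i\,d\mu\leq C\alpha\mu(B_i)$ and on $f\in L_{1,\mathrm{loc}}$, the latter following from $|\nabla f|\in L_p$ together with $(P_q)$. Once this homogeneous decomposition is in place, the approximation argument above is routine.
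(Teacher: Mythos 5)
The paper disposes of item~2 with a single reference to the proof of Theorem~9 in \cite{franchi1}, so your proposal is effectively a self-contained alternative argument rather than a variant of the paper's. Running a homogeneous Calder\'on--Zygmund decomposition and using its Lipschitz good part $g_\alpha$ as the approximant is correct and natural; it essentially reconstructs the Proposition~\ref{CZH}/Theorem~\ref{EKH} machinery of Section~5 and repurposes it for density rather than for $K$-functional estimates. Your convergence bound $\|\,|\nabla(f-g_\alpha)|\,\|_p^p\lesssim\int_{\Omega_\alpha}|\nabla f|^p\,d\mu\to 0$ (via $(P_p)$ on each Whitney ball, bounded overlap, and absolute continuity of the integral, since $\mu(\Omega_\alpha)\to 0$) is the right endgame.

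You do, however, leave a genuine gap unflagged: the hypothesis of the proposition gives $(P_q)$ \emph{for Lipschitz functions only}, while your decomposition applies $(P_q)$ to the non-Lipschitz $f\in\overset{.}{E_p^1}$ (in the $\int_{B_i}|\nabla b_i|^q$ estimate and in the analogue of \eqref{g}), and applies $(P_p)$ to the non-Lipschitz $g_\alpha$ in order to invoke Haj\l{}asz's pointwise inequality and extract the Lipschitz representative. You therefore need a preliminary lemma upgrading $(P_q)$ from Lipschitz functions to all $u\in W_{q,\mathrm{loc}}^1$; on a complete Riemannian manifold this is standard (mollify locally, apply the Lipschitz case on a slightly smaller ball, pass to the limit), but it is logically prior to the density you are proving and must be made explicit, else the argument risks reading as circular. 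Conversely, the obstacle you do flag, namely $f\in L_{1,\mathrm{loc}}$, is already definitional here: the paper records that elements of $\overset{.}{E_p^1}$ lie in $L_{p,\mathrm{loc}}$, so no appeal to $(P_q)$ is needed for that. Two minor points: $\Omega_\alpha\neq M$ holds for every $\alpha>0$ as soon as $\mu(\Omega_\alpha)<\infty<\mu(M)$, not merely for large $\alpha$; and $\mu\bigl(\bigcup_i B_i\bigr)=\mu(\Omega_\alpha)$ exactly, without a factor $CN$, since the Whitney balls exhaust $\Omega_\alpha$.
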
      
\begin{proof} The proof of item 2. is implicit in the proof of Theorem 9 in \cite{franchi1}.
\end{proof}
We obtain for the $K$-functional of the homogeneous Sobolev spaces the following homogeneous  form of Theorem \ref{EK},  weaker in the particular case $r=q$, but again  sufficient for us to interpolate. 
\begin{thm}\label{EKH} Let $M$ be a complete Riemannian manifold satisfying $(D)$ and $(P_{q})$ for some $1\leq q<\infty$. Let $1\leq r\leq q$.  Then
\begin{itemize}
\item[1.] there exists $C_{1}$ such that for every $F \in\overset{.}{W_{r}^{1}}+\overset{.}{W_{\infty}^{1}}$ and all $t>0$
$$ K(F,t^\frac{1}{r},\overset{.}{W_{r}^{1}},\overset{.}{W_{\infty}^{1}})\geq C_{1}t^{\frac{1}{r}}|\nabla f|^{r**\frac{1}{r}}(t) \textrm{ where } \, f\in \overset{.}{E_{r}^{1}}+\overset{.}{E_{\infty}^{1}} \textrm{ and } \overline{f}=F;
$$
\item[2.] for $ q\leq p<\infty$, there exists $C_{2}$ such that for every $F\in \overset{.}{W_{p}^{1}}$  and every $t>0$
$$ 
K(F,t^{\frac{1}{r}},\overset{.}{W_{r}^{1}},\overset{.}{W_{\infty}^{1}})\leq C_{2} t^{\frac{1}{r}}|\nabla f|^{q**\frac{1}{q}}(t)\textrm{ where } \, f\in \overset{.}{E_{p}^{1}} \textrm{ and } \overline{f}=F.
$$
\end{itemize}

\end{thm}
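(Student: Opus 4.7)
The plan is to mirror the proof of Theorem \ref{EK}, with two modifications tailored to the homogeneous setting. For the lower bound (1), I would exploit the fact that for every $1\leq s\leq\infty$, the distributional gradient descends to an isometric injection $\overset{.}{W_s^1}\hookrightarrow L_s$ (of vector-valued functions), since $\|\overline{f}\|_{\overset{.}{W_s^1}}=\|\,|\nabla f|\,\|_s$ by definition. Given any decomposition $F=\overline{f_0}+\overline{f_1}$ with representatives chosen so that $\nabla f=\nabla f_0+\nabla f_1$, I would split $|\nabla f|=h_0+h_1$ by setting $h_1=\min(|\nabla f|,\|\nabla f_1\|_\infty)$ and $h_0=|\nabla f|-h_1$. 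Then $h_0\leq |\nabla f_0|$ a.e.\ and $\|h_1\|_\infty\leq\|\nabla f_1\|_\infty$, whence $K(|\nabla f|,t^{1/r},L_r,L_\infty)\leq K(F,t^{1/r},\overset{.}{W_r^1},\overset{.}{W_\infty^1})$ upon taking the infimum. Theorem \ref{IK}(1) then gives (1) exactly as in the non-homogeneous case.

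For the upper bound (2), I plan to rerun the Calder\'on--Zygmund construction of Proposition \ref{CZ} with the level set defined only through the gradient. Given $F\in\overset{.}{W_p^1}$ with $q\leq p<\infty$, pick a representative $f\in\overset{.}{E_p^1}$, set $\alpha=\alpha(t)=(\mathcal{M}|\nabla f|^q)^{*\frac{1}{q}}(t)$ and consider $\Omega=\{\mathcal{M}|\nabla f|^q>\alpha^q\}$. Since $|\nabla f|^q\in L_{p/q}$ the maximal theorem gives $\mu(\Omega)\leq t<\infty$, and $\Omega\neq M$ because $\mu(M)=\infty$ by non-compactness of $M$ combined with $(D)$. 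Taking a Whitney cover $(B_i)$ of $\Omega$, a subordinate partition of unity $(\chi_i)$ and setting $b_i=(f-f_{B_i})\chi_i$, $g=f-\sum_i b_i$, the two crucial estimates
\[
\int_{B_i}|\nabla b_i|^q d\mu\leq C\alpha^q\mu(B_i)\quad\text{and}\quad |\nabla g|\leq C\alpha\text{ a.e.}
\]
are inherited verbatim from the proof of Proposition \ref{CZ}: they rest on $(P_q)$, on $\overline{B_i}\cap\Omega^c\neq\emptyset$, on the identity $\nabla g=\ind_{\Omega^c}\nabla f+\sum_i f_{B_i}\nabla\chi_i$, and on the chain estimate $|f_{B_i}-f_{B_j}|\leq Cr_j\alpha$, all of which involve only $|\nabla f|$ and $(P_q)$.

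The main obstacle I anticipate, as the second remark following Proposition \ref{CZ} warns, is proving $g\in L_{1,loc}$ so that the classes $\overline{b}\in\overset{.}{W_r^1}$ and $\overline{g}\in\overset{.}{W_\infty^1}$ are legitimately defined: in the homogeneous setting $f$ lies only in $L^p_{loc}$, not globally in $L^p$, so the trivial argument $b=\sum_i b_i\in L^1$ used non-homogeneously is unavailable. To handle this I would recycle the bound $\int|b_i|/r_i\,d\mu\leq C\alpha\mu(B_i)$ proved inside Proposition \ref{CZ} together with $d(x,\Omega^c)\geq r_i$ on $\supp b_i$, obtaining for any compactly supported $\varphi\in L_\infty$ the estimate
\[
\int\sum_i|b_i||\varphi|\,d\mu\leq C\alpha\mu(\Omega)\sup_{x\in M}d(x,\Omega^c)|\varphi(x)|<\infty.
\]
Hence $b\in L_{1,loc}$ and $g=f-b\in L_{1,loc}$. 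With this in hand, the bounded overlap property gives $\|\nabla b\|_r\leq C\alpha\mu(\Omega)^{1/r}\leq C\alpha t^{1/r}$, and combined with $\|\nabla g\|_\infty\leq C\alpha$ we obtain
\[
K(F,t^{1/r},\overset{.}{W_r^1},\overset{.}{W_\infty^1})\leq\|\nabla b\|_r+t^{1/r}\|\nabla g\|_\infty\leq Ct^{1/r}\alpha\sim Ct^{1/r}|\nabla f|^{q**\frac{1}{q}}(t),
\]
using $(\mathcal{M}h)^*\sim h^{**}$ in the final equivalence. This yields (2).
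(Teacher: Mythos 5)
Your proposal is correct and follows essentially the same route as the paper: for the lower bound you make explicit the elementary optimal splitting of $|\nabla f|$ that underlies the paper's appeal to boundedness of the gradient operator, and for the upper bound you rerun the Calder\'on--Zygmund decomposition with $\Omega$ defined through $\mathcal{M}(|\nabla f|^q)$ alone, which is exactly the paper's Proposition~\ref{CZH}. You also correctly flag and resolve the one genuine subtlety --- establishing $g\in L_{1,loc}$ via the bound $\int|b_i|/r_i\,d\mu\leq C\alpha\mu(B_i)$ --- which is precisely the observation the paper inserts into the proof of Proposition~\ref{CZ} for later use in the homogeneous case.
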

Before we prove Theorem \ref{EKH}, we give the following Calder\'on-Zygmund decomposition that will be also in this case our principal tool to estimate $K$. 
\begin{prop}[Calder\'{o}n-Zygmund lemma for Sobolev functions]\label{CZH} Let $M$ be a complete non-compact Riemannian manifold satisfying $(D)$ and $(P_{q})$ for some $1\leq q<\infty$. 
Let $ q \leq p < \infty$, $ f \in \overset{.}{E_{p}^{1}}$ and $\alpha
>0$. Then there is a collection of balls $(B_{i})_{i}$, 
functions $b_{i}\in \overset{.}{E_{q}^{1}}$ and a Lipschitz function $g$ 
such that the following properties hold :
\begin{equation}
f = g+\sum_{i}b_{i} \label{dfn}
\end{equation}
\begin{equation}
|\nabla g(x)|\leq C\,\alpha
\quad \mu-a.e. \label{egn}
\end{equation}
\begin{equation}
\supp b_{i}\subset B_{i}\;\textrm {and for }  1\leq r\leq q\;
 \int_{B_{i}}|\nabla b_{i}|^{r}
d\mu \leq C\alpha^{r}\mu(B_{i}) \;\label{ebn}
\end{equation}
\begin{equation}
\sum_{i}\mu(B_{i})\leq C\alpha^{-p}\int |\nabla f|^{p}d\mu
\label{eBn}
\end{equation}
\begin{equation}
\sum_{i}\chi_{B_{i}}\leq N \label{rbn}.
\end{equation}
The constants $C$ and $N$ depend only on $q$, $p$ and the constant in
$(D)$.
\end{prop}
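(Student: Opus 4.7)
The plan is to mirror the proof of Proposition \ref{CZ} almost verbatim, but replacing $(|f|+|\nabla f|)$ by $|\nabla f|$ throughout, and carefully tracking the fact that we now only control gradients (so $g$ and the $b_i$ need not be in $L_p$ individually). Set
$$\Omega=\{x\in M:\mathcal{M}(|\nabla f|^q)(x)>\alpha^q\}.$$
By the maximal theorem applied to $|\nabla f|^q\in L_{p/q}$ we obtain $\mu(\Omega)\leq C\alpha^{-p}\|\,|\nabla f|\,\|_p^p<\infty$, which in particular forces $\Omega\neq M$. If $\Omega$ is empty, take $g=f$, $b_i=0$, and Lebesgue differentiation gives $|\nabla f|\leq\alpha$ a.e., yielding \eqref{egn}. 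Otherwise, introduce a Whitney decomposition $(\underline{B_i})$ of $\Omega$ and the associated covering $(B_i)$, $(\overline{B_i})$ with the same three properties as in Proposition \ref{CZ}; \eqref{rbn} is the bounded overlap property, and \eqref{eBn} follows from \eqref{rbn} together with the maximal estimate on $\mu(\Omega)$. The analogue of \eqref{f} becomes
$$\int_{B_i}|\nabla f|^q\,d\mu\leq \int_{\overline{B_i}}|\nabla f|^q\,d\mu\leq \alpha^q\mu(\overline{B_i})\leq C\alpha^q\mu(B_i),$$
since $\overline{B_i}\cap F\neq\emptyset$.

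Next I would introduce the same partition of unity $(\chi_i)$ and set $b_i=(f-f_{B_i})\chi_i$. Note this makes sense pointwise since $f\in L_{1,loc}$. The identity $\nabla b_i=\chi_i\nabla f+(f-f_{B_i})\nabla\chi_i$ and the bound $\|\,|\nabla\chi_i|\,\|_\infty\leq C/r_i$, combined with $(P_q)$ on $B_i$ and the gradient estimate above, yield
$$\int_{B_i}|\nabla b_i|^q\,d\mu\leq C\alpha^q\mu(B_i),$$
and for $1\leq r\leq q$ H\"older's inequality on $B_i$ gives \eqref{ebn}.

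Now define $g=f-\sum_i b_i$. Repeating the distributional argument from Proposition \ref{CZ} verbatim (estimating $\int|b_i|/r_i$ by H\"older and $(P_q)$), the sum is locally finite and $g\in L_{1,loc}$, which is the observation the author explicitly flags as needed for the homogeneous case. Using $\sum_i\chi_i=\ind_\Omega$ and $\sum_i\nabla\chi_i=0$ on $\Omega$, one computes
$$\nabla g=\ind_F\nabla f+\sum_i f_{B_i}\nabla\chi_i.$$
The first term is bounded by $\alpha$ a.e.\ by Lebesgue differentiation on $F$. For the second, fix $x\in\Omega$ and some $j\in I_x$; since $\sum_{i\in I_x}\nabla\chi_i(x)=0$, write
$$h(x)=\sum_{i\in I_x}(f_{B_i}-f_{B_j})\nabla\chi_i(x),$$
and estimate $|f_{B_i}-f_{B_j}|\leq Cr_j\alpha$ by the chain of inequalities \eqref{g} (which used only $(D)$, $(P_q)$, and the gradient version of \eqref{f}). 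The bounded overlap $\#I_x\leq N$ and $|\nabla\chi_i|\leq C/r_i\sim C/r_j$ then give $|h(x)|\leq CN\alpha$, establishing \eqref{egn}.

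The main (and only real) obstacle compared to the non-homogeneous case is that we no longer have the pointwise control $|f_{B_i}|\leq\alpha$ from \eqref{f_B}, so we cannot bound $\|g\|_\infty$ — but this is not asked for in Proposition \ref{CZH}, which only requires the gradient bound. To conclude that $g$ has a Lipschitz representative (needed so that $g\in\dot{W}_\infty^1$), I would invoke the same argument sketched in Remark 2 after Proposition \ref{CZ}: since $g\in\dot{W}_p^1$ with $|\nabla g|\in L_\infty$, the pointwise inequality of Hajlasz-Koskela
$$|g(x)-g(y)|\leq C\,d(x,y)\bigl((\mathcal{M}|\nabla g|^p)^{1/p}(x)+(\mathcal{M}|\nabla g|^p)^{1/p}(y)\bigr)$$
together with Theorem \ref{MIT} and $\|\,|\nabla g|\,\|_\infty\leq C\alpha$ exhibit $g$ as Lipschitz with constant $\lesssim\alpha$. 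This completes the verification of all five conclusions \eqref{dfn}--\eqref{rbn}.
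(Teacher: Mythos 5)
Your proof is correct and follows essentially the same approach as the paper. The paper's own proof is a one-sentence pointer: run Proposition \ref{CZ} with $\Omega=\{\mathcal{M}(|\nabla f|^q)>\alpha^q\}$ and note that the hypothesis $f\in L_p$ was used in the non-homogeneous case only to control $\|g\|_\infty$ and $\|b_i\|_r$, neither of which is asserted here. You spell out exactly this, correctly identifying the two estimates that are dropped, carrying the $L_{1,loc}$ argument for $g$ (the step the paper flags as the reason for inserting that observation into the proof of Proposition \ref{CZ}), and invoking Remark 2 and the Hajlasz--Koskela pointwise inequality to upgrade the a.e.\ gradient bound to a genuine Lipschitz representative for $g$; the only small thing left implicit is that $|\nabla g|=\ind_F\nabla f+h\in L_p$ (from $\nabla f\in L_p$ and $h$ bounded with support of finite measure), which is needed before applying that pointwise inequality, but this is routine and implicit in the paper as well.
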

\begin{proof}The proof goes as in the case of non-homogeneous Sobolev spaces, but taking 
$\Omega=\left\lbrace x\in M:\mathcal{M}(|\nabla f|^{q})(x)>\alpha^{q}\right\rbrace$ as $\|f\|_{p}$ is not under control. We note that in the non-homogeneous case, we used that $f\in L_{p}$ only to control $g\in L_{\infty}$ and $b\in L_{r}$.
\end{proof}
\begin{rem}
It is sufficient for us that the Poincar\'{e} inequality holds for all $f\in \overset{.}{E_{p}^{1}}$.
\end{rem}
\begin{cor}\label{CCZH}
Under the same hypotheses as in the Calder\'{o}n-Zygmund lemma, we have
$$ \overset{.}{W_{p}^{1}}\subset \overset{.}{W_{r}^{1}}+\overset{.}{W_{\infty}^{1}} \; \textrm{ for }
 1\leq r\leq q\leq p<\infty \,.
$$
\end{cor}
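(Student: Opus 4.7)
The plan is to deduce the splitting directly from the Calder\'on--Zygmund decomposition of Proposition \ref{CZH}, analogously to how Corollary \ref{CCZ} follows from Proposition \ref{CZ}. Given $F\in \overset{.}{W_p^1}$, I would pick a representative $f\in\overset{.}{E_p^1}$, fix any $\alpha>0$ (say $\alpha=1$), and apply Proposition \ref{CZH} to obtain $f=g+\sum_i b_i$ together with the estimates \eqref{egn}--\eqref{rbn}. The goal is to verify that, passing to equivalence classes modulo constants, $\bar{g}\in \overset{.}{W_\infty^1}$ and $\overline{\sum_i b_i}\in \overset{.}{W_r^1}$.

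The Lipschitz part is essentially free: the estimate $|\nabla g|\leq C\alpha$ almost everywhere given by \eqref{egn}, combined with the argument already used in part 2 of the Remark following Proposition \ref{CZ} (applying Theorem 3.2 of \cite{hajlasz4} together with $(P_q)$ and Theorem \ref{MIT} at $p=\infty$), shows that $g$ has a Lipschitz representative with Lipschitz constant controlled by $C\alpha$. Thus $\bar g\in\overset{.}{W_\infty^1}$ with $\|\bar g\|_{\overset{.}{W_\infty^1}}\leq C\alpha$.

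For the remaining piece $b=\sum_i b_i$, the key is to combine the bounded overlap property \eqref{rbn} with the gradient bound \eqref{ebn}. Since for each $x$ at most $N$ of the supports $B_i$ overlap, H\"older's inequality applied pointwise yields
\[
\Bigl(\sum_i |\nabla b_i(x)|\Bigr)^r \leq N^{r-1}\sum_i |\nabla b_i(x)|^r,
\]
and integrating and invoking \eqref{ebn} and \eqref{eBn} gives
\[
\int\Bigl(\sum_i|\nabla b_i|\Bigr)^r d\mu \leq N^{r-1}C\alpha^r\sum_i\mu(B_i)\leq C'\alpha^{r-p}\|\,|\nabla f|\,\|_p^p<\infty.
\]
This shows $\sum_i b_i$ converges (locally, because of bounded overlap of supports) to a distribution whose distributional gradient lies in $L_r$, so $\sum_i b_i\in\overset{.}{E_r^1}$ and its class lies in $\overset{.}{W_r^1}$.

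The main obstacle, and the only point requiring care, is the justification that the distributional gradient of $b=\sum_i b_i$ agrees with $\sum_i\nabla b_i$; since the sum is locally finite on $\Omega$ (by bounded overlap of the supports) and vanishes on $F=\Omega^c$, this follows by testing against compactly supported smooth functions and interchanging sum and integral, exactly as in the non-homogeneous case. Combining the two pieces produces $F=\bar g+\bar b$ with $\bar g\in\overset{.}{W_\infty^1}$ and $\bar b\in\overset{.}{W_r^1}$, which is the claimed inclusion.
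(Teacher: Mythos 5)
Your proof is correct and follows the route the paper clearly intends (the corollary is stated without proof, but the very same decomposition with a fixed $\alpha$ is deployed again, with $\alpha=\alpha(t)$, in the proof of Theorem \ref{EKH}): apply Proposition \ref{CZH} at a fixed level $\alpha>0$, read off $\bar g\in\overset{.}{W_\infty^1}$ from \eqref{egn}, and get $\bar b\in\overset{.}{W_r^1}$ by combining the bounded overlap \eqref{rbn} with the local estimate \eqref{ebn} and the measure bound \eqref{eBn}. The one point you flag as delicate, identifying the distributional gradient of $\sum_i b_i$ with $\sum_i\nabla b_i$, is indeed the point that the paper itself addresses in the proof of Proposition \ref{CZ} via the bound $\int\sum_i|b_i|\,|\varphi|\,d\mu\leq C\alpha\,\mu(\Omega)\sup_x\bigl(d(x,F)|\varphi(x)|\bigr)$ (and the accompanying remark about the homogeneous case), and your appeal to that argument is exactly right.
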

\begin{proof}[Proof of Theorem \ref{EKH}]
The proof of item 1. is the same as in the non-homogeneous case. Let us turn to inequality 2.. For $F\in\overset{.}{W_{p}^{1}}$ we take $f\in\overset{.}{E_{p}^{1}}$ with $\overline{f}=F$. Let $t>0$ and $\alpha(t)=\Bigl(\mathcal {M}(|\nabla f|^{q})\Bigr)^{*\frac{1}{q}}(t)$. By the Calder\'{o}n-Zygmund decomposition with $\alpha=\alpha(t)$, $f$ can be written $f=b+g$, hence $F=\overline{b}+\overline{g}$, with $\|\overline{b}\|_{\overset{.}{W_{r}^{1}}}=\|\,|\nabla b|\,\|_{r}\leq C\alpha(t) \mu(\Omega)^{\frac{1}{r}}$ and $\|\overline{g}\|_{\overset{.}{W_{\infty}^{1}}}=\||\,\nabla g|\,\|_{\infty}\leq C\alpha(t)$. Since for $\alpha=\alpha(t)\,$ we have $\mu(\Omega)\leq t$, then we get  $K(F,t^{\frac{1}{r}},\overset{.}{W_{r}^{1}},\overset{.}{W_{\infty}^{1}})\leq Ct^{\frac{1}{r}}|\nabla f|^{q**\frac{1}{q}}(t)$.
\end{proof} 
We can now prove our interpolation result for the homogeneous Sobolev spaces.
\begin{proof}[Proof of Theorem \ref{IHS}] The proof follows directly from Theorem \ref{EKH}. Indeed, item 1. of Theorem \ref{EKH} yields  
$$(\overset{.}{W_{r}^{1}},\overset{.}{W_{\infty}^{1}})_{1-\frac{r}{p},p} \subset \overset{.}{W_{p}^{1}}$$ 
with 
$\|F\|_{\overset{.}{W_{p}^{1}}}\leq C\|F\|_{1-\frac{r}{p},p}$, while  item 2. gives us that 
$$
\overset{.}{W_{p}^{1}}\subset (\overset{.}{W_{r}^{1}},\overset{.}{W_{\infty}^{1}})_{1-\frac{r}{p},p}
$$ 
with $\|F\|_{1-\frac{r}{p},p}\leq C\|F\|_{\overset{.}{W_{p}^{1}}}$. We conclude that
$$\overset{.}{W_{p}^{1}}= (\overset{.}{W_{r}^{1}},\overset{.}{W_{\infty}^{1}})_{1-\frac{r}{p},p}
$$
 with equivalent norms.
 \end{proof}
 
 \begin{cor}[The reiteration theorem]\label{RH} Let $M$ be a complete non-compact Riemannian manifold satisfying $(D)$ and $(P_{q})$ for some $1\leq q<\infty$. Define $q_{0}=\inf\left\lbrace q \in [1,\infty[:(P_{q})\textrm{ holds }\right\rbrace$. Then for $p>q_{0}$ and $1\leq p_{1}<p<p_{2}\leq \infty$,  $ \overset{.}{W_{p}^{1}}$ is an interpolation space between $\overset{.}{W_{p_{1}}^{1}}$ and $\overset{.}{W_{p_{2}}^{1}}$.
 \end{cor}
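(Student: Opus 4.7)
The plan is to mirror the proof of Corollary \ref{CIS} nearly verbatim, using the homogeneous interpolation identity Theorem \ref{IHS} in place of its non-homogeneous counterpart Theorem \ref{IS}. The two external ingredients are exactly the same as there: the Keith--Zhong self-improvement (Theorem \ref{kz}), which guarantees that $A_{M}$ is open to the right of $q_{0}$, and the reiteration theorem for the real $K$-method (Bennett--Sharpley, Chapter 5, Theorem 2.4). Fix $\theta\in(0,1)$ with $1/p=(1-\theta)/p_{1}+\theta/p_{2}$. Since $p>q_{0}$, Theorem \ref{kz} furnishes some $q\in A_{M}$ with $q_{0}<q<p$. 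From there I split into the same two cases as in the non-homogeneous argument.

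In the case $p_{1}>q_{0}$ I further shrink $q$ so that $q<p_{1}$ as well. Theorem \ref{IHS} then applies at each of $p_{1}$, $p_{2}$ and $p$ and yields
$$
\overset{.}{W_{p_{i}}^{1}}=(\overset{.}{W_{q}^{1}},\overset{.}{W_{\infty}^{1}})_{1-q/p_{i},\,p_{i}}\quad(i=1,2),\qquad \overset{.}{W_{p}^{1}}=(\overset{.}{W_{q}^{1}},\overset{.}{W_{\infty}^{1}})_{1-q/p,\,p}.
$$
A direct computation gives $1-q/p=(1-\theta)(1-q/p_{1})+\theta(1-q/p_{2})$, so the reiteration theorem collapses
$$
(\overset{.}{W_{p_{1}}^{1}},\overset{.}{W_{p_{2}}^{1}})_{\theta,p}=(\overset{.}{W_{q}^{1}},\overset{.}{W_{\infty}^{1}})_{1-q/p,\,p}=\overset{.}{W_{p}^{1}},
$$
which is the desired conclusion.

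In the remaining case $1\leq p_{1}\leq q_{0}$ I would instead pick $q\in A_{M}$ with $q_{0}<q<\min(p,p_{2})$, so that $p_{1}\leq q_{0}<q$ and hence $r:=p_{1}\leq q$. Applying Theorem \ref{IHS} on the $p_{2}$-side gives
$$
\overset{.}{W_{p_{2}}^{1}}=(\overset{.}{W_{p_{1}}^{1}},\overset{.}{W_{\infty}^{1}})_{1-p_{1}/p_{2},\,p_{2}}
$$
(no such identification is needed if $p_{2}=\infty$). Setting $\theta':=\theta(1-p_{1}/p_{2})=1-p_{1}/p$, the one-sided reiteration theorem applied only to the second endpoint reduces the interpolation to $(\overset{.}{W_{p_{1}}^{1}},\overset{.}{W_{\infty}^{1}})_{\theta',p}$, which Theorem \ref{IHS} once more (still with $r=p_{1}\leq q<p$) identifies with $\overset{.}{W_{p}^{1}}$.

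I do not expect a genuine obstacle: every ingredient is already in place in the paper, and the algebra of exponents is identical to Corollary \ref{CIS}. The only step deserving a moment of care is the sub-case $p_{1}<q_{0}$ of the second case: there Theorem \ref{IHS} is not available directly at the left endpoint $p_{1}$, which is why one must first re-express the right endpoint and invoke the one-sided form of reiteration before the final application of Theorem \ref{IHS} on the $p$-side. The role of Theorem \ref{kz} is precisely to secure a $q\in A_{M}$ strictly less than $\min(p,p_{2})$, without which neither case would close.
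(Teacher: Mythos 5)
Your proposal is correct and tracks exactly the argument the paper intends. The paper in fact states Corollary \ref{RH} without writing out a proof, merely indicating earlier that ``the reiteration theorem implies another version of Theorem \ref{IHS}''; the implicit argument is precisely the two-case reiteration scheme from Corollary \ref{CIS}, carried over verbatim to the homogeneous scale via Theorem \ref{IHS} and Keith--Zhong (Theorem \ref{kz}), which is what you do. Your exponent bookkeeping ($1-q/p=(1-\theta)(1-q/p_1)+\theta(1-q/p_2)$ and $\theta'=\theta(1-p_1/p_2)=1-p_1/p$) and your handling of the sub-case $p_1\leq q_0$ by re-expressing only the right endpoint both match the non-homogeneous proof, so there is nothing to add.
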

 
\paragraph{\textbf{Application}}

Consider a complete non-compact Riemannian manifold $M$ satisfying $(D)$ and $(P_{q})$ for some $1\leq q<2$.
Let $\Delta$ be the Laplace-Beltrami operator. Consider the linear operator  $\Delta^{\frac{1}{2}}$ with the following resolution
$$
\Delta^{\frac{1}{2}}f= c \int_{0}^{\infty}\Delta e^{-t\Delta}f\frac{dt}{\sqrt{t}} ,\quad  f\in C^{\infty}_{0}
$$
where $c=\pi^{-\frac{1}{2}}$. Here
$\Delta^{\frac{1}{2}}f$ can be defined for $f\in \Lip$ as a measurable function (see \cite{auscher1}).

In \cite{auscher1}, Auscher and Coulhon proved that on such a manifold, we have
$$
\mu\left\lbrace x\in M:|\Delta^{\frac{1}{2}}f(x)|>\alpha\right\rbrace\leq \frac{C}{\alpha^{q}}\|\,|\nabla f|\,\|_{q}
$$
for $f\in C_{0}^{\infty}$, with $q\in[1,2[$. In fact one can check that the argument applies to all $f\in \Lip\cap\overset{.}{E_{q}^{1}}$ and since $\Delta^{\frac{1}{2}}1=0$, $\Delta^{\frac{1}{2}}$ can be defined on $\Lip\cap\overset{.}{W_{q}^{1}}$ by taking quotient which we keep calling $\Delta^{\frac{1}{2}}$. Moreover, Proposition \ref{DBL} gives us that $\Delta^{\frac{1}{2}}$ has a bounded extension from $\overset{.}{W_{q}^{1}}$ to $L_{q,\infty}$. 
Since we already have 
\begin{equation*}
 \| \Delta^{\frac{1}{2}}f\|_{2}\leq  \|\,|\nabla f|\,\|_{2}
\end{equation*}
then by Corollary \ref{RH}, we see at once 
\begin{equation}\label{RRp}
 \| \Delta^{\frac{1}{2}}f\|_{p}\leq C_{p} \|\,|\nabla f|\,\|_{p}
 \end{equation}
 for all $q<p\leq 2$ and $f\in \overset{.}{W_{p}^{1}}$, without using the argument in \cite{auscher1}. 

 \section{Sobolev spaces on compact manifolds}
Let $M$ be a $C^{\infty}$ compact manifold equipped with a Riemannian metric. Then $M$ satisfies then the doubling property $(D)$ and  the Poincar\'{e} inequality $(P_{1})$.
\begin{thm} Let $M$ be a $C^{\infty}$ compact Riemannian manifold. There exist $C_{1},\,C_{2}\;$ such that for all  $f \in
 W^{1}_{1}+W^{1}_{\infty}$ and all $t>0$ we have
 \begin{equation*} \tag{$\ast_{\textrm{comp}}$}
C_{1} t\Bigl(|f|^{ **}(t)+|\nabla
f|^{**}(t)\Bigr)\leq
K(f,t,W^{1}_{1},W^{1}_{\infty})\leq C_{2}t
\Bigl(|f|^{**}(t)+|\nabla
f|^{**}(t)\Bigr).
\end{equation*}
 \end{thm}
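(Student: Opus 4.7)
The statement is the compact-manifold analogue of Theorem \ref{EK} with $r=q=1$. A $C^\infty$ compact Riemannian manifold automatically satisfies the global doubling property $(D)$ (bounded geometry, finite diameter) and the global Poincar\'e inequality $(P_1)$ (classical on compact manifolds with smooth metric), so the ingredients behind Theorem \ref{EK} are available; the only novelty is that $\mu(M)<\infty$, which we need to accommodate. A useful preliminary observation is that on a compact manifold $L_\infty\subset L_1$, hence $W^1_\infty\subset W^1_1$ and $W^1_1+W^1_\infty=W^1_1$, so the statement reduces to a bound for $f\in W^1_1$.

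For the lower bound, the argument of Theorem \ref{EK}, part 1, transfers verbatim: it combines the identity $K(f,t,L_1,L_\infty)\sim t\,|f|^{**}(t)$ from Theorem \ref{IK} with the triangle-type inequality $K(f,t,W^1_1,W^1_\infty)\geq K(f,t,L_1,L_\infty)+K(|\nabla f|,t,L_1,L_\infty)$ coming from the boundedness of $(I,\nabla):W^1_s\to L_s(M;\mathbb{C}\times TM)$ for every $1\le s\le\infty$. Neither step uses non-compactness.

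For the upper bound, I would split on $t$. If $t\ge\mu(M)$, the trivial decomposition $f=f+0$ gives $K(f,t,W^1_1,W^1_\infty)\le\|f\|_{W^1_1}$, and since $f^*$ and $|\nabla f|^*$ are supported in $[0,\mu(M)]\subset[0,t]$ we have $t\,|f|^{**}(t)=\|f\|_1$ and $t\,|\nabla f|^{**}(t)=\|\nabla f\|_1$, which immediately yields the required bound. If $t<\mu(M)$, set
\[
\alpha(t)=\bigl(\mathcal M(|f|+|\nabla f|)\bigr)^{*}(t),\qquad \Omega=\{x\in M:\mathcal M(|f|+|\nabla f|)(x)>\alpha(t)\}.
\]
Then $\mu(\Omega)\le t<\mu(M)$, so $\Omega\ne M$ and Proposition \ref{CZ} (with $q=p=1$) applies: it yields $f=g+\sum_i b_i$ with $\|g\|_{W^1_\infty}\le C\alpha(t)$ (after passing to the Lipschitz representative) and, by the bounded overlap of the Whitney balls and (\ref{eb}), $\|\sum_i b_i\|_{W^1_1}\le C\alpha(t)\mu(\Omega)\le Ct\alpha(t)$. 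Using $(\mathcal M g)^*\sim g^{**}$ and subadditivity of $(\cdot)^{**}$, we have $\alpha(t)\le C(|f|^{**}(t)+|\nabla f|^{**}(t))$, so $K(f,t,W^1_1,W^1_\infty)\le Ct\,(|f|^{**}(t)+|\nabla f|^{**}(t))$.

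The only potentially delicate point is that Proposition \ref{CZ} is stated for $f\in W^1_p$ with $q\le p<\infty$, while the claim is for $f\in W^1_1+W^1_\infty$. On a compact manifold this concern evaporates, exactly because $W^1_1+W^1_\infty=W^1_1$, so the case $p=q=1$ of Proposition \ref{CZ} is already enough; no Devore--Scherer type approximation is needed, in contrast with the non-compact special case $r=q$ of Theorem \ref{EK}. Thus the compact case is in fact slightly simpler than the non-compact one, and the two-sided estimate $(\ast_{\mathrm{comp}})$ follows.
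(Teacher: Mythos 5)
Your proof is correct and follows essentially the same route as the paper: same Calder\'on--Zygmund decomposition when $\Omega\ne M$, and a trivial/direct estimate in the complementary case. The only difference is cosmetic---you split on $t\gtrless\mu(M)$ rather than on $\Omega=M$ versus $\Omega\ne M$ as the paper does, but since $\mu(\Omega)\le t<\mu(M)$ forces $\Omega\ne M$ these dichotomies agree where it matters---and your observation that $W^1_\infty\subset W^1_1$ on a compact manifold, so that $W^1_1+W^1_\infty=W^1_1$ and no approximation step is needed, cleanly explains why the compact case is simpler than the $r=q$ case of Theorem \ref{EK}.
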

\begin{proof}
It remains to prove the upper bound for $K$ as the lower bound is trivial. Indeed, let us consider for all $t>0$ and for $\alpha(t)=\left(\mathcal{M}(|f|+|\nabla f|)\right)^{*}(t)$, $\Omega=\left\lbrace x\in M;\mathcal{M}(|f|+|\nabla f|)(x)\geq \alpha(t)\right\rbrace$. If $\Omega\neq M$, we have the Calder\'{o}n-Zygmund decomposition as in Proposition \ref{CZ} with $q=1$ and the proof will be the same as the proof of Theorem \ref{EK} in the global case.
Now if $\Omega=M$, we prove the upper bound  by the same argument used in the proof of Theorem \ref{EK} in the local case.
Thus, in the two cases we obtain the right hand inequality of $(\ast_{\textrm{comp}})$ for all $f\in W_{1}^{1}+W_{\infty}^{1}$. 
\end{proof}
It follows that
\begin{thm} For all $\, 1\leq p_{1}<p<p_{2}\leq \infty$, $W_{p}^{1}$ is an interpolation space between $W_{p_{1}}^{1}$ and $W_{p_{2}}^{1}$. 
\end{thm}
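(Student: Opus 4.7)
The plan is to run the argument of section \ref{TI} with $r=1$ for the endpoint pair $(W_1^1,W_\infty^1)$ and then invoke the reiteration theorem. The crucial difference with respect to the general non-compact setting is that on a compact manifold both $(D)$ and $(P_1)$ hold, and the preceding theorem produces a genuine \emph{two-sided} equivalence
\[ K(f,t,W_1^1,W_\infty^1)\sim t\bigl(|f|^{**}(t)+|\nabla f|^{**}(t)\bigr) \]
for \emph{every} $f\in W_1^1+W_\infty^1$. This removes the $r<q$ gap that obstructed a complete characterization in the global non-compact case treated in section \ref{TI}.

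Starting from this equivalence, I would reproduce the chain of estimates at the beginning of section \ref{TI}: raise both sides to the $p$-th power, weight by $t^{-(1-1/p)p-1}$, integrate over $(0,\infty)$, and use the classical bound $\|h^{**}\|_p\sim\|h\|_p$ for $1<p\leq\infty$ (\cite{stein3}, Chapter V). This identifies
\[ W_p^1=(W_1^1,W_\infty^1)_{1-\frac{1}{p},\,p} \]
with equivalent norms for every $1<p<\infty$, so $W_p^1$ is already an exact real interpolation space between the extreme pair.

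To reach an arbitrary pair $(W_{p_1}^1,W_{p_2}^1)$ with $1\leq p_1<p<p_2\leq\infty$, I would split into two cases exactly as in Corollary \ref{CIS}. When $p_1>1$, the reiteration theorem (\cite{bennett}, Chapter 5, Theorem 2.4) applied to the scale anchored at $(W_1^1,W_\infty^1)$ gives
\[ (W_{p_1}^1,W_{p_2}^1)_{\theta,p}=(W_1^1,W_\infty^1)_{1-\frac{1}{p},\,p}=W_p^1, \]
where $\theta\in(0,1)$ is defined by $\frac{1}{p}=\frac{1-\theta}{p_1}+\frac{\theta}{p_2}$, using the identity $1-1/p=(1-\theta)(1-1/p_1)+\theta(1-1/p_2)$. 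When $p_1=1$, one reiterates only at the right endpoint with exponent $\theta'=1-1/p$, obtaining $(W_1^1,W_{p_2}^1)_{\theta,p}=(W_1^1,W_\infty^1)_{\theta',p}=W_p^1$. The exact interpolation property of the real method then yields the interpolation statement for bounded operators.

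I do not foresee a serious obstacle here, as the hard analysis (the Calderón--Zygmund decomposition for Sobolev functions and the K-functional estimation) is already packaged in the preceding theorem. The only point requiring a moment of attention is the boundary case $p_1=1$, where one cannot move the left anchor further left; this is handled by the second-case argument from the proof of Corollary \ref{CIS}, which carries over verbatim since $q_0=1$ on a compact Riemannian manifold.
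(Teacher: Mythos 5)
Your proposal is correct and is precisely the argument the paper has in mind: the paper writes only ``It follows that,'' deferring to the two-sided $K$-functional equivalence $(\ast_{\textrm{comp}})$ just proved, the identification $W_p^1=(W_1^1,W_\infty^1)_{1-1/p,p}$ via the computations at the start of section \ref{TI} (using $\|h^{**}\|_p\sim\|h\|_p$), and the reiteration theorem split into the cases $p_1>1$ and $p_1=1$ exactly as in Corollary \ref{CIS} with $q_0=1$. Nothing is missing; you have simply spelled out what the paper leaves implicit.
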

\section{Metric-measure spaces}
In this section we consider $(X,d,\mu)$ a metric-measure space with $\mu$ doubling. 
\subsection{Upper gradients and Poincar\'{e} inequality}
\begin{dfn}[Upper gradient \cite{heinonen5}]
Let $u:X\rightarrow \mathbb{R}$ be a Borel function. We say that a Borel function $g:X\rightarrow [0,+\infty]$  is an  upper gradient of $u$ if $|u(\gamma(b))-u(\gamma(a))|\leq \int_{a}^{b}g(\gamma(t)) dt$ for all 1-Lipschitz curve $\gamma:[a,b]\rightarrow X$ \footnote{Since every rectifiable curve admits an arc-length parametrization that makes the  curve 1-Lipschitz, the class of 1-Lipschitz curves coincides with the class of rectifiable curves, modulo a parameter change.}.
\end{dfn} 
\begin{rem} If $X$ is a Riemannian manifold, $|\nabla u|$ is an upper gradient of $u\in \Lip$ and $|\nabla u|\leq g$ for all upper gradients $g$ of $u$.
\end{rem}
\begin{dfn}
For every locally Lipschitz continuous function $u$ defined on a open set of $X$, we define 
\begin{equation*}
\Lip u(x)= \begin{cases}
\lim\sup_{\stackrel {y\rightarrow x } {y\neq x}} \frac{|u(y)-u(x)|}{d(y,x)} \;\textrm{if }\, x \textrm{ is not isolated},
\\ 
0 \textrm{  otherwise.}
\end{cases}
\end{equation*}
\end{dfn}
\begin{rem} $\Lip u$ is an upper gradient of $u$.
\end{rem}
\begin{dfn}[Poincar\'{e} Inequality]\label{PG} A metric-measure space $(X,d,\mu)$ admits a weak local Poincar\'{e} inequality $(P_{qloc})$ for some $1\leq q<\infty$, if there exist $r_{1}>0,\,\lambda\geq 1,\;C=C(q,r_{1})>0$, such that for every continuous function $u$ and upper gradient $g$ of $u$, and for every ball $B$ of radius $0<r<r_{1} $ the following inequality holds:
\begin{equation*} \tag{$P_{qloc}$}
\Bigl(\aver{B}|u-u_{B}|^{q}d\mu\Bigr)^{\frac{1}{q}}\leq Cr \Bigl(\aver{\lambda B}g^{q}d\mu\Bigr)^{\frac{1}{q}}. 
\end{equation*}
If $\lambda=1$, we say that we have a strong local Poincar\'{e} inequality.
 \\ Moreover, $X$ admits a global Poincar\'{e} inequality or simply a Poincar\'{e} inequality $ (P_{q})$ if one can take $r_{1}=\infty$.
\end{dfn}
\subsection{Interpolation of the Sobolev spaces $H_{p}^{1}$} \label{ESM}
Before defining the Sobolev spaces $H_{p}^{1}$ it is convenient to recall the following proposition.
\begin{prop}\label{H}(see \cite{hajlasz3} and \cite{cheeger1} Theorem 4.38) Let $(X,d,\mu)$ be a  complete metric-measure space, with $\mu$ doubling and satisfying a  weak Poincar\'{e} inequality $(P_{q})$ for some $1<q<\infty$. Then there exist  an integer $N$, $C\geq 1$ and a linear operator $D$ which associates to each locally Lipschitz function $u$ a measurable function $Du\,: \,X\rightarrow \mathbb{R}^{N}$ such that :
\begin{itemize}
\item[1.] if $u\,$ is $ L$-Lipschitz, then $|Du|\leq CL\;\mu-a.e.$;
\item[2.] if $u$ is locally Lipschitz and constant on a measurable  set $E\subset X$, then $Du=0\; \mu -a.e.$ on $E$;
\item[3.] for locally Lipschitz functions $u$ and $v$, $D(uv)=uDv+vDu$;
\item[4.] for each locally Lipschitz function $u$, $\Lip u\leq |Du|\leq C\,\Lip u$, and hence $(u,|Du|)$ satisfies the weak  Poincar\'{e} inequality $(P_{q})$ .
\end{itemize}
\end{prop}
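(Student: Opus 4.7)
The plan is to invoke Cheeger's construction \cite{cheeger1} of a measurable differentiable structure on a doubling metric-measure space supporting $(P_q)$. Under these hypotheses Cheeger produces a countable atlas of ``charts'' $(U_\alpha,\phi_\alpha)$ with each $U_\alpha$ measurable, the $U_\alpha$ covering $X$ modulo a null set, $\phi_\alpha\colon U_\alpha\to\mathbb{R}^{N(\alpha)}$ a Lipschitz map, and uniformly $N(\alpha)\le N$ for some integer $N$ depending only on the doubling constant and on the Poincar\'e data. The operator $D$ will be defined chart by chart as the linear differential with respect to these coordinate maps.

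First I would establish the Rademacher-type theorem at the heart of the construction: for each locally Lipschitz $u$ on $X$ and each chart $(U_\alpha,\phi_\alpha)$ there is a $\mu$-a.e.\ defined measurable section $\xi_{u,\alpha}\colon U_\alpha\to\mathbb{R}^{N(\alpha)}$ such that, for $\mu$-a.e.\ $x_0\in U_\alpha$,
$$
u(x)-u(x_0)=\xi_{u,\alpha}(x_0)\cdot\bigl(\phi_\alpha(x)-\phi_\alpha(x_0)\bigr)+o\bigl(d(x,x_0)\bigr).
$$
Existence is proved by a blow-up/density argument at Lebesgue points of $\Lip u$, using $(D)$ for the nested-balls compactness and $(P_q)$ to force the remainder to be genuinely $o(d(x,x_0))$ rather than merely $O(d(x,x_0))$. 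Uniqueness then makes the assignment $u\mapsto \xi_{u,\alpha}$ linear automatically.

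I would then set $Du:=\xi_{u,\alpha}$ on each $U_\alpha$, padded with zeros to reach the common fiber dimension $N$. Linearity of $D$ is automatic and the Leibniz rule (3) follows by applying the expansion above to $u$, $v$, and $uv$ and comparing first-order affine terms. Property (1) reduces to the observation that an $L$-Lipschitz function has $|\xi_{u,\alpha}(x_0)|\le CL$, because the components of $\phi_\alpha$ are asymptotically maximally independent at $x_0$. Property (2) is a consequence of Lebesgue differentiation: at a density point of a set on which $u$ is constant the affine approximation has zero slope. For property (4) the lower bound $\Lip u(x_0)\le |Du(x_0)|$ is immediate from the affine expansion combined with the Lipschitz norm of $\phi_\alpha$, while $|Du(x_0)|\le C\,\Lip u(x_0)$ is obtained by bounding $\xi_{u,\alpha}$ in terms of the $L^q$-average of $\Lip u$ over small balls via $(P_q)$ and then passing to the density limit; the Poincar\'e inequality for $(u,|Du|)$ then transfers from the one for $(u,\Lip u)$ by squeezing.

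The principal obstacle is the construction of the atlas itself with \emph{finite, uniformly bounded} fiber dimension $N$. This is the main content of Cheeger's theorem: it requires the careful selection of Lipschitz coordinate functions whose infinitesimal independence is maximised at each base point, together with an asymptotic-dimension bound extracted from $(D)$ and $(P_q)$ via generalized tangent-cone analysis. Once this combinatorial step is in place, all four listed properties reduce to routine bookkeeping around the differential $\xi_{u,\alpha}$.
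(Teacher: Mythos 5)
The paper does not prove this proposition; it quotes it verbatim from Cheeger (\cite{cheeger1}, Theorem 4.38) and Haj\l{}asz (\cite{hajlasz3}). Your sketch correctly identifies Cheeger's measurable differentiable structure as the underlying construction, and your reductions of properties 1--4 to the chartwise Rademacher expansion are all sound and standard, so you are taking essentially the same route: everything hinges on Cheeger's atlas theorem, which is exactly the result the paper is citing.
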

 We define now $H_{p}^{1}=H_{p}^{1}(X,d,\mu)$  for $1\leq p<\infty$ as the closure of locally Lipschitz functions for the norm\\
$$ 
\|u\|_{H_{p}^{1}}=\| u\|_{p}+\| \,|Du|\,\|_{p} \equiv \|u\|_{p}+\| \Lip u\|_{p}.
$$ 
We denote $H_{\infty}^{1}$ for the set of all bounded Lipschitz functions on $X$. 
\begin{rem} Under the hypotheses of Proposition \ref{H}, the uniqueness of the gradient holds for every $f\in H_{p}^{1}$ with $p\geq q$. By uniqueness of gradient we mean that if $u_{n}$ is a locally Lipschitz sequence such that  $u_n\rightarrow 0$ in $L_{p}$ and $Du_{n}\rightarrow g\in L_p$ then $g=0 \; a.e.$. Then $D$ extends to a bounded linear operator from $H_{p}^{1}$ to $L_{p}$.
\end{rem}
In the remaining part of this section, we consider a complete non-compact metric-measure space $(X,d,\mu)$ with $\mu$ doubling. We also assume that $X$ admits a Poincar\'{e} inequality $(P_{q}) $ for some $1<q<\infty$ as defined in Definition \ref{PG}.
By \cite{keith2} Theorem 1.3.4, this is equivalent to say that there exists $C>0$ such that for all $f\in \Lip$ and for every ball $B$ of $X$ of radius $r>0$ we have
\begin{equation*}\tag{$P_{q}$}
\int_{B}|f-f_{B}|^{q}d\mu\leq Cr^{q}\int_{B}|\Lip f|^{q}d\mu .
\end{equation*}

Define $q_{0}=\inf\left\lbrace q \in ]1,\infty[: (P_{q}) \textrm{ holds }\right\rbrace$.
 \begin{lem}\label{dL} Under these hypotheses, and for $q_{0}<p<\infty$, $Lip\cap H_{p}^{1}$ is dense in $H_{p}^{1}$.
\end{lem}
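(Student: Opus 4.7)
The plan is to exploit the Calder\'on--Zygmund decomposition for Sobolev functions in the metric-measure setting (an analog of Proposition \ref{CZ}, with $|Du|$ from Proposition \ref{H} playing the role of $|\nabla u|$). Since $H_p^1$ is by definition the closure of locally Lipschitz functions under the $H_p^1$-norm, it suffices to fix a locally Lipschitz $u$ with $\|u\|_{H_p^1}<\infty$ and approximate it in $H_p^1$ by globally Lipschitz functions.

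Because $p>q_0$, Theorem \ref{kz} furnishes some $s$ with $q_0<s<p$ (and $s\leq q$ after possibly shrinking) for which $(P_s)$ holds. For $\alpha>0$, set $\Omega_\alpha=\{\mathcal{M}(|u|+|Du|)^s>\alpha^s\}$ and $F_\alpha=X\setminus\Omega_\alpha$, and apply the $(P_s)$-based Calder\'on--Zygmund decomposition to obtain a splitting $u=g_\alpha+b_\alpha$ with the analogs of \eqref{eg}--\eqref{rb}: in particular $|g_\alpha|\leq C\alpha$, $|Dg_\alpha|\leq C\alpha$ $\mu$-a.e., $g_\alpha=u$ on $F_\alpha$, and $b_\alpha$ is supported in $\Omega_\alpha$. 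The chaining argument in Remark 3.5.2 (which only uses the pointwise bound on the gradient together with $(P_s)$) upgrades the pointwise estimate $|Dg_\alpha|\leq C\alpha$ to the statement that $g_\alpha$ admits a Lipschitz representative with Lipschitz constant $\lesssim\alpha$; combined with $|g_\alpha|\leq C\alpha$ and $\mu(\Omega_\alpha)<\infty$, one checks directly that $g_\alpha\in\Lip\cap H_p^1$.

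It remains to show $\|u-g_\alpha\|_{H_p^1}\to 0$ as $\alpha\to\infty$. Since $u-g_\alpha$ vanishes on $F_\alpha$, property (2) of Proposition \ref{H} together with linearity of $D$ gives $|D(u-g_\alpha)|\leq \chi_{\Omega_\alpha}(|Du|+|Dg_\alpha|)$, so
\begin{equation*}
\|u-g_\alpha\|_{H_p^1}^p \leq C\int_{\Omega_\alpha}(|u|^p+|Du|^p)\,d\mu + C\alpha^p\mu(\Omega_\alpha).
\end{equation*}
The first integral tends to $0$ by absolute continuity of the integral, since $\mu(\Omega_\alpha)\to 0$ and $|u|^p+|Du|^p\in L_1$. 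For the second term, the fact that $p/s>1$ and the strong $(p/s,p/s)$ boundedness of $\mathcal{M}$ give $\mathcal{M}((|u|+|Du|)^s)\in L_{p/s}$, hence
\begin{equation*}
\alpha^p\mu(\Omega_\alpha)\leq \int_{\Omega_\alpha}\bigl(\mathcal{M}((|u|+|Du|)^s)\bigr)^{p/s}d\mu\longrightarrow 0,
\end{equation*}
again by absolute continuity. This yields $g_\alpha\to u$ in $H_p^1$ and completes the argument.

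The main obstacle is the first step: verifying that the Calder\'on--Zygmund machinery of Proposition \ref{CZ} transfers cleanly to the metric-measure setting with the Cheeger-type gradient $D$, and in particular that the piece $g_\alpha$ --- constructed via a Whitney decomposition, partition of unity and averages $u_{B_i}$ --- inherits from the pointwise bound $|Dg_\alpha|\leq C\alpha$ a genuine Lipschitz property. Once this is in place (via the Hajlasz--Koskela chaining argument with $(P_s)$, as in Remark 3.5.2), the remainder of the proof is a standard truncation/absolute-continuity argument.
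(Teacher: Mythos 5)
Your argument is correct, and it is a self-contained route in contrast to the paper, which simply defers to the proof of Theorem~9 of Franchi--Haj\l asz--Koskela (\cite{franchi1}). Both proofs are ultimately Lipschitz-truncation arguments, but you derive the truncation from the Calder\'on--Zygmund machinery already built in the paper (Proposition~\ref{CZ} transported to the metric setting, plus the chaining argument of Remark~3.5.2) together with the Keith--Zhong open-endedness theorem, whereas the paper imports the conclusion wholesale from an external reference. What your approach buys is internal coherence: the density lemma becomes a corollary of tools the paper already develops, and the role of $q_0$ (needing a Poincar\'e exponent $s<p$ so that $\mathcal{M}$ is strong-type $(p/s,p/s)$) is made completely explicit, which is exactly the point where the restriction $p>q_0$ enters.

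Two small points you should make explicit to close potential gaps. First, the paper's Proposition~7.7 is stated only for $f\in\Lip\cap H_p^1$, whereas you need it for merely locally Lipschitz $u$ with $\|u\|_{H_p^1}<\infty$; you should observe that the proof of that proposition goes through verbatim for locally Lipschitz $u$ (the Leibniz rule and property~2 of Proposition~\ref{H} only require local Lipschitzness, closed balls are compact since $X$ is complete and doubling so $u$ is Lipschitz on each Whitney ball, and the chaining argument for $g$ uses only $(P_s)$ and the a.e. bound $|Dg|\leq C\alpha$). Second, your choice of $s$ need only satisfy $q_0<s<p$ with $(P_s)$ holding; the additional constraint $s\leq q$ is harmless but unnecessary, and the strict inequality $s<p$ is the essential one because it is what makes $p/s>1$ and hence $\mathcal M\big((|u|+|Du|)^s\big)\in L_{p/s}$. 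With those two remarks inserted, the estimate
\begin{equation*}
\|u-g_\alpha\|_{H_p^1}^p \leq C\int_{\Omega_\alpha}\bigl(|u|^p+|Du|^p\bigr)\,d\mu + C\int_{\Omega_\alpha}\bigl(\mathcal{M}((|u|+|Du|)^s)\bigr)^{p/s}\,d\mu
\end{equation*}
and the absolute-continuity argument finish the proof as you describe.
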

\begin{proof} See the proof of Theorem 9 in \cite{franchi1}.
\end{proof}
 \begin{prop}\textbf{Calder\'{o}n-Zygmund lemma for Sobolev functions}\\Let $(X,d,\mu)$ be a complete non-compact metric-measure space with $\mu$ doubling, admitting a Poincar\'{e} inequality $(P_{q})$ for some $1< q<\infty$.
Then, the Calder\'{o}n-Zygmund decomposition of Proposition \ref{CZ} still holds in the present situation for $f\in \Lip\cap H_{p}^{1}$, $q\leq p
<\infty$, replacing $\nabla f$ by $Df$.
\end{prop}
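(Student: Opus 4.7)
The plan is to mimic the proof of Proposition \ref{CZ} almost verbatim, substituting the Riemannian gradient $\nabla$ by the Cheeger-type derivative $D$ from Proposition \ref{H}, and using the fact that the hypotheses of Proposition \ref{H} are exactly those assumed here. Given $f\in \Lip\cap H_{p}^{1}$ and $\alpha>0$, I first introduce
\begin{equation*}
\Omega=\bigl\{x\in X:\mathcal{M}(|f|+|Df|)^{q}(x)>\alpha^{q}\bigr\}.
\end{equation*}
Since $|f|+|Df|\in L_{p}$ and $p\geq q>1$, the maximal theorem yields $\mu(\Omega)\leq C\alpha^{-p}\int(|f|+|Df|)^{p}\,d\mu<\infty$, so $\Omega\neq X$. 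In the trivial case $\Omega=\emptyset$, I take $g=f$, $b_{i}=0$. Otherwise, I take a Whitney covering $(B_{i})_{i}$ of $\Omega$ together with the same Lipschitz partition of unity $(\chi_{i})_{i}$ subordinated to it, with $\|\Lip \chi_{i}\|_{\infty}\leq C/r_{i}$, so that by Proposition \ref{H}(1), $\|\,|D\chi_{i}|\,\|_{\infty}\leq C/r_{i}$ as well.

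Next, I define $b_{i}=(f-f_{B_{i}})\chi_{i}$; this is Lipschitz since both factors are, so $Db_{i}$ is defined via Proposition \ref{H}. The Leibniz rule of Proposition \ref{H}(3) gives $Db_{i}=\chi_{i}Df+(f-f_{B_{i}})D\chi_{i}$, and the bound
\begin{equation*}
\int_{B_{i}}(|b_{i}|^{r}+|Db_{i}|^{r})\,d\mu\leq C\alpha^{r}\mu(B_{i}),\qquad 1\leq r\leq q,
\end{equation*}
follows from Hölder's and Jensen's inequalities together with the Poincaré inequality $(P_{q})$ applied to the pair $(f,|Df|)$, which is available by Proposition \ref{H}(4), and the fact that $\overline{B_{i}}\cap F\neq\emptyset$ forces $\aver{\overline{B_{i}}}(|f|+|Df|)^{q}\,d\mu\leq\alpha^{q}$. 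Properties \eqref{rb} and \eqref{eB} are immediate from the Whitney decomposition and the estimate on $\mu(\Omega)$.

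I then set $g=f-\sum_{i}b_{i}$. Local integrability of $g$, the identity
\begin{equation*}
Dg=\ind_{F}(Df)+\sum_{i}f_{B_{i}}D\chi_{i}\quad\text{on }\Omega,
\end{equation*}
(using $\sum_{i}\chi_{i}=1$ and hence $\sum_{i}D\chi_{i}=0$ on $\Omega$, by the linearity of $D$ and local finiteness of the partition), and the pointwise bounds $|g(x)|\leq C\alpha$ and $|Dg(x)|\leq C\alpha$ $\mu$-a.e.\ all go through just as in the Riemannian argument. The key auxiliary estimate $|f_{B_{i}}-f_{B_{j}}|\leq Cr_{j}\alpha$ for $i\in I_{x}$ and $x\in B_{j}$ is proved as before from $(D)$, $(P_{q})$, Hölder, and (\ref{f}). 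Finally, since $g\in H_{p}^{1}$, the pair $(g,|Dg|)$ satisfies $(P_{p})$, and the Hajlasz-type representation (Theorem 3.2 of \cite{hajlasz4}) together with $\|\,|Dg|\,\|_{\infty}\leq C\alpha$ gives that $g$ has a Lipschitz representative with Lipschitz constant $O(\alpha)$.

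The only genuinely new point, compared with Proposition \ref{CZ}, is the bookkeeping required to check that every appearance of $\nabla$ in the original argument used only properties that $D$ also enjoys: linearity, the Leibniz rule, comparability with $\Lip$, and the Poincaré inequality for the pair $(u,|Du|)$. Once this is observed, the argument carries over without further modification; there is no substantive obstacle beyond verifying that Proposition \ref{H} supplies all the needed calculus rules in the metric setting.
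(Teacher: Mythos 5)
Your proof is correct and follows essentially the same route as the paper: replace $\nabla$ by the Cheeger differential $D$ from Proposition \ref{H} and rerun the Riemannian Calder\'on--Zygmund argument, using the linearity of $D$, the product rule, the comparability $\Lip u\sim |Du|$, the Poincar\'e inequality for the pair $(u,|Du|)$, and the vanishing of $D$ on level sets. The paper's own proof makes the same observations, merely more tersely.
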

\begin{proof} The proof is similar, replacing $\nabla f$ by $Df$, using that $D$ of Proposition \ref{H} is linear. Since the $\chi_{i}$ are $\frac{C}{r_{i}}$ Lipschitz then $\| D\chi_{i}\|_{\infty}\leq \frac{C}{r_{i}}$ by item 1. of Theorem \ref{H} and the $b_{i}$'s are Lipschitz. We can see that $g$ is also Lipschitz. Moreover, using the finite additivity of $D$ and the property 2. of Proposition \ref{H}, we get the equality $\mu-a.e.$ 
 $$
 Dg= Df-D(\sum_{i}b_{i})=Df-(\sum_{i}Db_{i}).
 $$
The rest of the proof goes as in Proposition \ref{CZ}.
\end{proof}
\begin{thm}\label{EKM} Let $(X,d,\mu)$ be a complete non-compact metric-measure space with $\mu$ doubling, admitting a Poincar\'{e} inequality $(P_{q})$ for some $1< q<\infty$. Then, there exist $C_{1},\,C_{2}$ such that  for all $ f \in
 H_{q}^{1}+H_{\infty}^{1}$ and all $t>0$  we have
\begin{equation*} \tag{$\ast_{\textrm{met}}$}
C_{1} t^{\frac{1}{q}}\Bigl(|f|^{q **\frac{1}{q}}(t)+|Df|^{q**\frac{1}{q}}(t)\Bigr)\leq
K(f,t^{\frac{1}{q}},H_{q}^{1},H_{\infty}^{1})\leq C_{2}t^{\frac{1}{q}}
\Bigl(|f|^{q**\frac{1}{q}}(t)+|D
f|^{q**\frac{1}{q}}(t)\Bigr).
\end{equation*}
\end{thm}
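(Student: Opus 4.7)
The plan is to mirror the global $r=q$ case of Theorem \ref{EK}, substituting the Cheeger-type derivation $D$ from Proposition \ref{H} for the Riemannian gradient. All the ingredients --- the Calderón-Zygmund decomposition stated just before the theorem, the density of $\Lip\cap H_q^1$ in $H_q^1$ (Lemma \ref{dL}), the doubling maximal theorem (Theorem \ref{MIT}), and the $L_q$--$L_\infty$ $K$-functional formula (Theorem \ref{IK}) --- are already available. The lower bound is immediate: since items 1 and 4 of Proposition \ref{H} make the linear map $(I,D):H_s^1\to L_s\times L_s(X;\mathbb{R}^N)$ bounded for every $1\le s\le\infty$, one has
$$K(f,t^{1/q},H_q^1,H_\infty^1)\ge K(f,t^{1/q},L_q,L_\infty)+K(|Df|,t^{1/q},L_q,L_\infty),$$
and Theorem \ref{IK}.1 converts the right-hand side into $C_1 t^{1/q}(|f|^{q**1/q}(t)+|Df|^{q**1/q}(t))$.

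For the upper bound I first handle $f\in \Lip\cap H_q^1$. Fix $t>0$, set $\alpha(t)=(\mathcal{M}(|f|+|Df|)^q)^{*1/q}(t)$, and let $\Omega=\{\mathcal{M}(|f|+|Df|)^q>\alpha(t)^q\}$, so that $\mu(\Omega)\le t$. When $\Omega=X$, the direct bound $\|f\|_{H_q^1}^q\le \int_0^t(|f|^{q*}(s)+|Df|^{q*}(s))\,ds = t(|f|^{q**}(t)+|Df|^{q**}(t))$, used as in the local part of Theorem \ref{EK}, already yields the upper estimate. When $\Omega\neq X$, apply the Calderón-Zygmund decomposition to write $f=g+b$ with $\|g\|_\infty$ and $\|\,|Dg|\,\|_\infty$ both bounded by $C\alpha(t)$, and, by bounded overlap of the $B_i$ together with the per-ball estimates on $b_i$ and $D b_i$, $\|b\|_q+\|\,|Db|\,\|_q\le C\alpha(t)\mu(\Omega)^{1/q}$. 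Hence
$$K(f,t^{1/q},H_q^1,H_\infty^1)\le\|b\|_{H_q^1}+t^{1/q}\|g\|_{H_\infty^1}\le Ct^{1/q}\alpha(t),$$
which is bounded by $C_2 t^{1/q}(|f|^{q**1/q}(t)+|Df|^{q**1/q}(t))$ via $(\mathcal{M}h)^*\sim h^{**}$ and the subadditivity $(h_1+h_2)^{**}\le h_1^{**}+h_2^{**}$.

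The step I expect to be most delicate is extending the upper bound from $\Lip\cap H_q^1$ to the full class $H_q^1+H_\infty^1$ in the statement. I would adapt the Devore-Scherer argument invoked at the end of the global proof of Theorem \ref{EK} for $r=q$: decompose $f=f_0+f_1$ with $f_0\in H_q^1$ and $f_1\in H_\infty^1$, approximate $f_0$ in $H_q^1$ by a sequence in $\Lip\cap H_q^1$ via Lemma \ref{dL}, apply the already-proven estimate to each approximant, and pass to the limit using that $K(\cdot,t^{1/q},H_q^1,H_\infty^1)$ is a norm on $H_q^1+H_\infty^1$ together with the lower semicontinuity in $L_q$ of the monotone functional $h\mapsto h^{q**}(t)$. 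The uniqueness-of-gradient remark following Proposition \ref{H} ensures $Df$ is unambiguously defined on $H_q^1+H_\infty^1$, so both sides of the stated inequality make sense.
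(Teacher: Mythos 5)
Your proposal is correct and follows essentially the same route as the paper: lower bound via boundedness of $(I,D)$ on the endpoint spaces, upper bound via the metric Calder\'on--Zygmund decomposition for $f\in\Lip\cap H_q^1$, then extension to $H_q^1$ by Lemma \ref{dL} and to $H_q^1+H_\infty^1$ by the Devore--Scherer argument. The only minor difference is that the paper splits the last step into two (first $H_q^1$ via density, then $H_q^1+H_\infty^1$ via the Devore--Scherer argument) while you merge them, and your case $\Omega=X$ in the upper bound is superfluous here since complete noncompact plus global doubling forces $\mu(X)=\infty$, so $\Omega\neq X$ automatically.
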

\begin{proof} We have $(\ast_{\textrm{met}})$ for all $f \in \Lip\cap  H_{q}^{1}$ from the Calder\'{o}n-Zygmund decomposition that we have done.
Now for $f \in  H_{q}^{1}$, by Lemma \ref{dL}, $f=\lim\limits_{n}f_{n}$ in $ H_{q}^{1}$, with $f_{n}$ Lipschitz and $\| f-f_{n}\|_{ H_{q}^{1}}<\frac{1}{n}$ for all $n$.
Since for all $n$, $f_{n}\in \Lip$, there exist $g_{n},\,h_{n}$ such that $f_{n}=h_{n}+g_{n}$ and 
$\|h_{n}\|_{ H_{q}^{1}}+t^{\frac{1}{q}} \| g_{n}\|_{H_{\infty}^{1}} \leq Ct^{\frac{1}{q}}\Bigl(|f_{n}|^{q **\frac{1}{q}}(t)+|D f_{n}|^{q**\frac{1}{q}}(t)\Bigr)$. Therefore we find
\begin{align*}
\| f-g_{n}\|_{ H_{q}^{1}}+t^{\frac{1}{q}} \|g_{n}\|_{H_{\infty}^{1}}&\leq \| f-f_{n}\|_{ H_{q}^{1}}+(\| h_{n}\|_{ H_{q}^{1}}+t^{\frac{1}{q}} \| g_{n}\|_{H_{\infty}^{1}})
\\
&\leq \frac{1}{n}+Ct^{\frac{1}{q}}\Bigl(|f_{n}|^{q **\frac{1}{q}}(t)+|D f_{n}|^{q\,**\frac{1}{q}}(t)\Bigr).
\end{align*}
Letting $n\rightarrow \infty$, since $|f_{n}|^{q}\underset{n\rightarrow \infty}{\longrightarrow}|f|^{q}$ in $L_{1}$ and $|D f_{n}|^{q}\underset{n\rightarrow \infty}{\longrightarrow}|D f|^{q}$ in $L_{1}$, it comes  $
|f_{n}|^{q **}(t)\underset{n\rightarrow \infty}{\longrightarrow} |f|^{q **}(t)$ and $|Df_{n}|^{q **}(t)\underset{n\rightarrow \infty}{\longrightarrow} |D f|^{q **}(t)$ for all $t>0$. Hence 
$(\ast_{\textrm{met}})$ holds for $ f\in  H_{q}^{1}$. We prove $(\ast_{\textrm{met}})$ for $ f\in  H_{q}^{1}+H_{\infty}^{1}$ by the same argument of \cite{devore1}.
\end{proof}
\begin{thm}[Interpolation Theorem]\label{IHM} Let $(X,d,\mu)$ be a complete non-compact metric-measure space with $\mu$ doubling, admitting a Poincar\'{e} inequality $(P_{q})$ for some $1< q<\infty$. Then, for $ q_{0}<p_{1}<p<p_{2}\leq \infty$\footnote{ We allow $p_{1}=1$ if $q_{0}=1$.}, $ H_{p}^{1}$ is an interpolation space between $H_{p_{1}}^{1}$ and $H_{p_{2}}^{1}$.
\end{thm}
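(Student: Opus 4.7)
The plan mirrors the proof of Corollary \ref{CIS}, adapted to the metric-measure setting. I would combine the $K$-functional characterization already obtained in Theorem \ref{EKM} with the Keith--Zhong self-improvement of the Poincar\'e exponent (Theorem \ref{kz}) and the reiteration theorem for the real method of interpolation.

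\emph{Step 1: Identify $H_p^1$ with a concrete interpolation space for the exponent furnished by the Poincar\'e hypothesis.} For every $q \leq p < \infty$ such that $(P_q)$ holds I claim
\[
H_p^1 = (H_q^1, H_\infty^1)_{1-q/p,\, p}
\]
with equivalent norms. This is obtained by integrating the two-sided bound
\[
K(f,\, t^{1/q},\, H_q^1,\, H_\infty^1) \sim t^{1/q}\bigl(|f|^{q**1/q}(t) + |Df|^{q**1/q}(t)\bigr)
\]
of Theorem \ref{EKM} against $t^{-(1-q/p)p}\,\frac{dt}{t}$, exactly as in the computation of Section 4 that showed $W_{p,r}^1 = W_p^1$; one uses the classical equivalence $\|g^{**}\|_\ell \sim \|g^*\|_\ell$ for $\ell > 1$ (\cite{stein3}, Chapter V) to drop the maximal rearrangement, together with Corollary \ref{CCZH}'s analogue in the metric setting to guarantee $H_p^1 \subset H_q^1 + H_\infty^1$.

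\emph{Step 2: Use Keith--Zhong to pick a good small exponent.} Since $p_1 > q_0$, Theorem \ref{kz} yields some $q$ with $q_0 < q < p_1$ (hence $q < p$) for which $(P_q)$ holds; in the boundary case allowed by the footnote, $q_0 = 1 = p_1$, I simply take $q = 1$ and use that $(P_1)$ is assumed in that case. Step 1 now identifies both $H_{p_1}^1$ and $H_p^1$ with interpolation spaces between $H_q^1$ and $H_\infty^1$, namely with parameters $1 - q/p_1$ and $1 - q/p$ respectively; a similar identification holds for $H_{p_2}^1$ when $p_2 < \infty$.

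\emph{Step 3: Reiterate.} Regarding $H_\infty^1$ as the trivial endpoint $(H_q^1, H_\infty^1)_{1,\infty}$ when $p_2 = \infty$, the reiteration theorem (\cite{bennett}, Chapter 5, Theorem 2.4) gives
\[
(H_{p_1}^1, H_{p_2}^1)_{\theta, p} = \bigl((H_q^1, H_\infty^1)_{1-q/p_1, p_1},\ (H_q^1, H_\infty^1)_{\eta_2, p_2}\bigr)_{\theta, p} = (H_q^1, H_\infty^1)_{\eta, p},
\]
where $\eta_2 = 1 - q/p_2$ if $p_2 < \infty$ and $\eta_2 = 1$ if $p_2 = \infty$. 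The affine identity $1 - q/p = (1-\theta)(1-q/p_1) + \theta\, \eta_2$ forces $\eta = 1 - q/p$, and by Step 1 the right-hand side is $H_p^1$.

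The main obstacle is Step 1: one must verify that the Calder\'on--Zygmund decomposition and the resulting $K$-functional estimates of Theorem \ref{EKM} integrate to yield both inclusions with full norm equivalence on $H_p^1$, not merely on $\mathrm{Lip} \cap H_p^1$. The density Lemma \ref{dL} bridges this gap, but one has to check that the lower $K$-bound (whose proof does not invoke $(P_q)$) extends to every $f \in H_q^1 + H_\infty^1$ so that the density argument is invoked only once. A secondary subtlety is handling $p_2 = \infty$ in the reiteration formula, which I resolve by treating $H_\infty^1$ as a degenerate endpoint, after which the exponent arithmetic is routine.
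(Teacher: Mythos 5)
Your proposal tracks the paper's own proof almost exactly: the paper simply says ``Theorem \ref{EKM} provides us with all the tools needed for interpolating, as we did in the Riemannian case,'' meaning precisely your Steps 1--3 (integrate the two-sided $K$-bound against $t^{-(1-q/p)p}\,\frac{dt}{t}$ to identify $H_p^1=(H_q^1,H_\infty^1)_{1-q/p,\,p}$, pick $q\in(q_0,p_1)$ with $(P_q)$ holding, reiterate). You also correctly flag the density issue, which the paper resolves inside Theorem \ref{EKM} via Lemma \ref{dL} rather than inside Theorem \ref{IHM}, so by the time you invoke the $K$-characterization the extension to all of $H_q^1+H_\infty^1$ is already in hand. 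Two small remarks. First, Keith--Zhong is not strictly required to produce $q\in(q_0,p_1)$ with $(P_q)$: since $(P_q)\Rightarrow(P_{q'})$ for $q'\geq q$, the set of admissible exponents already contains $(q_0,\infty)$ by definition of the infimum; Keith--Zhong gives openness at the left endpoint, which matters when one wants $(P_{q_0})$ itself, not for selecting an interior $q<p_1$. Second, your treatment of the footnote case $q_0=1=p_1$ is slightly too optimistic: Theorem \ref{EKM} as stated requires $1<q<\infty$ (and Proposition \ref{H}'s differential $D$ is constructed under the same assumption), so ``taking $q=1$'' needs a separate justification that the CZ decomposition and $K$-estimate go through at the endpoint; the paper itself glosses over this, but you should not present it as immediate.
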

\begin{proof} Theorem \ref{EKM} provides us with all the tools needed for interpolating, as we did in the Riemannian case. In particular, we get Theorem \ref{IHM}.
\end{proof}
\begin{rem} We were not able to get our interpolation result as in the Riemmanian case for $p_1\leq q_{0}$. Since we do not have Poincar\'e inequality $(P_{p_1})$, the uniqueness of the gradient $D$ does not hold in general in $H_{p_1}^{1}$.
\end{rem} 
\begin{rem} Other Sobolev spaces on metric-measure spaces were introduced in the last few years, for instance  $M_{p}^{1}$, $\,N_{p}^{1}$, $\,C_{p}^{1}$, $\,P_{p}^{1}$. If $X$ is a complete metric-measure space satisfying $(D)$ and $(P_{q})$ for some $1<q<\infty$, it can be shown that for $q_{0}<p\leq\infty$, all the mentioned spaces are equal to $H_{p}^{1}$ with equivalent norms (see \cite{hajlasz4}). In conclusion our interpolation result carries over to those Sobolev spaces.
\end{rem}
\begin{rem} 
The purpose of this remark is to extend our results to local assumptions. Assume that $(X,d,\mu)$ is a complete metric-measure space, with $\mu$ locally doubling, and admitting a local Poincar\'{e} inequality $(P_{qloc})$ for some $1<q<\infty$. Since $X$ is complete and $(X,\mu)$ satisfies a local doubling condition and a local Poincar\'{e} inequality  $(P_{qloc})$, then according to an observation of David and Semmes (see the introduction in \cite{cheeger1}), every ball $B(z,r)$, with $0<r<\min(r_{0},r_{1})$, is $\lambda=\lambda(C(r_{0}), C(r_{1}))$ quasi-convex, $C(r_{0})$ and $C(r_{1})$ being the constants appearing in the local doubling property and in the local Poincar\'{e} inequality. Then, for $0<r<\min(r_{0},r_{1})$, $B(z,r)$ is $\lambda$ bi-Lipschitz to a length space 
(one can associate, canonically, to a $\lambda$-quasi-convex metric space a length metric space, which is $\lambda$-bi-Lipschitz to the original one). Hence, we get a result similar to the one in Theorem \ref{EKM}. Indeed, the proof goes as that of Theorem \ref{EK} in the local case noting that the $B^{j}$'s considered there are then $\lambda$ bi-Lipschitz to a length space with $\lambda$ independent of $j$. Thus Lemma \ref{DB} still holds (see Remark \ref{RB}). Therefore, we get the characterization $(\ast_{\textrm{met}})$ of $K$  and by interpolating, we obtain the correspondance analogue of Theorem \ref{IHM}.
\end{rem}
\section{Applications}
\subsection{Carnot-Carath\'{e}odory spaces}
An important application of the theory of So\-bo\-lev spaces on metric-measure spaces is to a Carnot-Carath\'eodory space.  
We refer to \cite{hajlasz4} for a survey on the theory of Carnot-Carath\'{e}odory spaces. \\
Let $\Omega \subset \mathbb{R}^{n}$  be a connected open set, $X=(X_{1},...,X_{k})$ a family of vector fields defined on $\Omega$, with real locally Lipschitz continuous coefficients and $|Xu(x)|=\Bigl(\sum\limits_{j=1}^{k}|X_{j}u(x)|^{2}\Bigr)^{\frac{1}{2}}$.
We equip $\Omega$ with the Lebesgue measure $\mathcal{L}^{n}$ and the Carnot-Carath\'{e}odory metric $\rho$ associated to the $X_{i}$. We assume that $\rho$ defines a distance. Then, the metric space $(\Omega,\rho)$ is a length space.
\begin{dfn} Let $1\leq p<\infty$. We define $H_{p,X}^{1}(\Omega)$ as the completion of locally metric \footnote{that is relative to the metric $\rho$ of Carnot-Carath\'eodory.} Lipschitz functions (equivalently of $C^{\infty}$ functions ) for the norm 
$$
\|f\|_{H_{p,X}^{1}}=\| f \|_{L_{p}(\Omega)}+\|\,|Xf|\, \|_{L_{p}(\Omega)}
$$
\end{dfn} 
We denote $H_{\infty,X}^{1}$ for the set of bounded metric Lipschitz function.
\begin{rem} For all $1\leq p\leq \infty$, $ H_{p,X}^{1}= W_{p,X}^{1}(\Omega):=\left\lbrace f\in L_{p}(\Omega):\,|Xf|\in L_{p}(\Omega)\right\rbrace$, where  $Xf$ is defined in the distributional sense (see for example \cite{garofalo} Lemma 7.6).
\end{rem}
Adapting the same method, we obtain the following interpolation theorem for the $H_{p,X}^{1}$.
\begin{thm}\label{EKC}
Consider $(\Omega,\rho,\mathcal{L}^{n})$ where $\Omega$ is a connected open subset of $\mathbb{R}^{n}$. We assume that $\mathcal{L}^{n}$ is locally doubling, that the identity map $id:(\Omega,\rho)\rightarrow (\Omega,|.|)$ is an homeomorphism. Moreover, we suppose that the space admits a local weak Poincar\'{e} inequality $(P_{qloc})$ for some  $1\leq q< \infty$. Then, for $1\leq p_{1}<p<p_{2}\leq \infty$ with $p>q_0$, $H_{p,X}^{1}$ is an interpolation space between $H_{p_{1},X}^{1}$ and $H_{p_{2},X}^{1}$.
\end{thm}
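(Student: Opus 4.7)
The plan is to adapt the machinery of Section 7 (in its local form discussed in the final remark of that section) to the Carnot--Carath\'eodory triple $(\Omega,\rho,\mathcal{L}^n)$, with $|Xf|$ playing the role of the generalized gradient $|Df|$ from Proposition \ref{H}.

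First I would verify that the hypotheses fall within the abstract framework: local doubling of $\mathcal{L}^n$ and the local Poincar\'e inequality $(P_{qloc})$ are assumed directly, the identification $H_{p,X}^1=W_{p,X}^1$ is recalled in the preceding remark, and the assumption that $\operatorname{id}:(\Omega,\rho)\to(\Omega,|\cdot|)$ is a homeomorphism ensures the compatibility needed for the measurable calculus. The David--Semmes observation recalled at the end of Section 7 then guarantees that every $\rho$-ball of radius smaller than $\min(r_0,r_1)$ is bi-Lipschitz equivalent to a length space, with constants uniform in the center; consequently Lemma \ref{DB} (in the form of Remark \ref{RB}) and the local Keith--Zhong improvement Theorem \ref{kz} both apply on such balls.

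Second, I would establish a Calder\'on--Zygmund decomposition for $f\in H_{p,X}^1$ with $q\leq p<\infty$, exactly as in Proposition \ref{CZ}: starting from the level set $\Omega_\alpha=\{\mathcal{M}(|f|+|Xf|)^q>\alpha^q\}$, take a Whitney covering $(B_i)$ by $\rho$-balls of radius less than $\min(r_0,r_1)$ with a subordinate Lipschitz partition of unity $(\chi_i)$, set $b_i=(f-f_{B_i})\chi_i$ and $g=f-\sum_i b_i$. The estimates on $|Xb_i|$ use only linearity of $X$, the Leibniz rule $X(uv)=uXv+vXu$, and the Poincar\'e inequality $(P_{qloc})$ applied on the enlarged balls of radius $<r_1$; the pointwise bounds $\|g\|_\infty\leq C\alpha$ and $\||Xg|\|_\infty\leq C\alpha$ are obtained by the same telescoping chaining argument as in Proposition \ref{CZ}.

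Third, from this decomposition the characterization of $K(f,t^{1/r},H_{r,X}^1,H_{\infty,X}^1)$ analogous to Theorem \ref{EK} transfers immediately: the lower bound comes from Theorem \ref{IK} applied via the bounded map $(I,X):H_{s,X}^1\to L_s(\Omega)\times L_s(\Omega;\mathbb{R}^k)$, and the upper bound follows from the decomposition combined with the partition-of-unity covering argument used in the local case of the proof of Theorem \ref{EK}. One then invokes the reiteration theorem together with Theorem \ref{kz}, splitting into the cases $p_1>q_0$ and $1\leq p_1\leq q_0$ exactly as in the proof of Corollary \ref{CIS}, to conclude that $H_{p,X}^1=(H_{p_1,X}^1,H_{p_2,X}^1)_{\theta,p}$ whenever $p>q_0$. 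The main obstacle is the geometric verification that the arguments of Lemma \ref{DB} and Theorem \ref{kz}, which are naturally formulated on length spaces, apply in the present quasi-convex (but not a priori length) framework; this is precisely resolved by the local bi-Lipschitz reduction mentioned above.
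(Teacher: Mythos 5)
Your outline is correct and follows the same route as the paper: adapt the Calder\'on--Zygmund decomposition for $H^1_{p,X}$, transfer the $K$-functional characterization, then invoke reiteration together with Keith--Zhong. The one place where you work harder than necessary is in the final paragraph: you treat $(\Omega,\rho)$ as a general quasi-convex metric space and route through the David--Semmes bi-Lipschitz reduction to length spaces in order to make Lemma \ref{DB} (via Remark \ref{RB}) applicable. But a Carnot--Carath\'eodory metric is \emph{by construction} a length metric, since $\rho(x,y)$ is itself defined as an infimum of lengths of admissible paths; the paper records this explicitly right before Theorem \ref{EKC} (``Then, the metric space $(\Omega,\rho)$ is a length space''). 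So Remark \ref{RB} applies directly and the detour through quasi-convexity is redundant. This does not break your argument --- a bi-Lipschitz image of a length space is fine for those lemmas --- but the ``main obstacle'' you identify is not an obstacle at all in this setting. Everything else (linearity and Leibniz rule for $X$, use of $(P_{qloc})$ on the enlarged Whitney balls, the partition-of-unity covering argument for the local-to-global step, and the case split $p_1>q_0$ versus $p_1\leq q_0$ in the reiteration step) matches the proof of Theorem \ref{EK} and Corollary \ref{CIS} as intended.
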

\subsection{Weighted Sobolev spaces}
We refer to \cite{heinonen2}, \cite{kilpelainen} for the definitions used in this subsection. 
Let $\Omega$ be an open subset of $\mathbb{R}^{n}$ equipped with the Euclidean distance, $w\in L_{1,loc}(\mathbb{R}^{n})$ with $w>0,\,d\mu=wdx$. We assume that $\mu$  is $q$-admissible for some $1<q<\infty$ (see \cite{heinonen3} for the definition). This is equivalent to say, (see \cite{hajlasz4}), that $\mu$ is doubling and  there exists $C>0$ such that for every ball $B\subset\mathbb{R}^{n}$ of radius $r>0$ and for every function $\varphi \in C^{\infty}(B)$,
\begin{equation*}\tag{$P_{q}$}
\int_{B} |\varphi-\varphi_{B}|^{q}d\mu \leq Cr^{q} \int_{B}|\nabla \varphi|^{q}d\mu
\end{equation*}
with $\varphi_{B}=\frac{1}{\mu(B)}\int_{B}\varphi d\mu$. The $A_{q}$ weights, $q>1$, satisfy these two conditions (see \cite{heinonen3}, Chapter 15).
\begin{dfn}\label{DH} For $q\leq p<\infty$, we define the Sobolev space $H_{p}^{1}(\Omega,\mu)$ to be the closure of $C^{\infty}(\Omega)$ for the norm
$$
\|u\|_{H_{p}^{1}(\Omega,\mu)}=\|u\|_{L_{p}(\mu)}+\| \,|\nabla u| \,\|_{L_{p}(\mu)}.
$$
\end{dfn}
\noindent We denote $H_{\infty}^{1}(\Omega,\mu)$ for the set of all bounded Lipschitz functions on $\Omega$.
 \\
 
 Using our method, we obtain the following interpolation theorem for the  Sobolev spaces $H_{p}^{1}(\Omega,\mu)$:
\begin{thm}\label{ISw} Let $\Omega$ be as in above. Then for $q_{0} <p_{1}<p<p_{2}\leq\infty$, $H_{p}^{1}(\Omega,\mu)$ is an interpolation space between $H_{p_{1}}^{1}(\Omega,\mu)$ and  $H_{p_{2}}^{1}(\Omega,\mu)$.
\end{thm}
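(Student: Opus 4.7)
The plan is to reduce Theorem \ref{ISw} to the metric-measure framework developed in Section 7, in particular to Theorem \ref{IHM}, by viewing $(\Omega,|\cdot|,\mu)$ as a metric-measure space that falls under those hypotheses. The $q$-admissibility of $\mu$ means exactly that $\mu$ is doubling and that the Poincar\'e inequality $(P_q)$ of Definition \ref{PG} holds in the Euclidean metric (with $\lambda=1$). For smooth or locally Lipschitz $u$, the Euclidean gradient $\nabla u$ plays the role of the abstract gradient $D u$ of Proposition \ref{H}: it is linear, satisfies the Leibniz rule, vanishes on sets of constancy, and is comparable to $\Lip u$ pointwise a.e. In particular $|\nabla u|$ is an upper gradient of $u$, so $(P_q)$ for the pair $(u,|\nabla u|)$ is an instance of Definition \ref{PG}. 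If $\Omega$ is not all of $\mathbb{R}^n$, the hypotheses on $\mu$ are local in nature, and we invoke the local extension discussed in the last remark of Section 7, relying on the fact that every small Euclidean ball contained in $\Omega$ is a length space (trivially, being convex).

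Next, I would verify that the space $H_p^{1}(\Omega,\mu)$ of Definition \ref{DH} coincides with the metric-measure Sobolev space $H_p^{1}$ constructed in Section 7. The definition as the closure of $C^{\infty}(\Omega)$ under the weighted norm, together with the density of $\Lip\cap H_p^{1}$ (Lemma \ref{dL}, whose proof carries over verbatim in the weighted setting since only doubling and $(P_q)$ are used), gives this identification for $p>q_0$. The proof of the Calder\'on--Zygmund decomposition of Proposition \ref{CZ} then transfers directly: given $f\in \Lip\cap H_p^{1}(\Omega,\mu)$ with $q\leq p<\infty$ and $\alpha>0$, one forms the level set of the maximal function $\mathcal{M}(|f|+|\nabla f|)^q$ relative to $\mu$, performs a Whitney decomposition, and produces $f=g+\sum_i b_i$ with the five properties \eqref{df}--\eqref{rb} (replacing $\nabla$ with the Euclidean gradient and $\mu$-measure in place of Riemannian measure). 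The only ingredients used are the doubling property of $\mu$, the $(P_q)$ inequality, the Whitney covering lemma (which holds in any doubling metric space), and the maximal theorem.

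With the Calder\'on--Zygmund decomposition in hand, the arguments of Theorem \ref{EKM} give the characterization
\begin{equation*}
K(f,t^{1/q},H_q^{1}(\Omega,\mu),H_\infty^{1}(\Omega,\mu))\sim t^{1/q}\bigl(|f|^{q**\frac{1}{q}}(t)+|\nabla f|^{q**\frac{1}{q}}(t)\bigr)
\end{equation*}
for every $f\in H_q^{1}(\Omega,\mu)+H_\infty^{1}(\Omega,\mu)$, with decreasing rearrangements taken with respect to $\mu$. The computation of Section \ref{TI} involving $\|f^{**}\|_{p/q}\sim \|f\|_{p/q}$ for $p>q$ then yields $H_p^{1}(\Omega,\mu)=(H_q^{1},H_\infty^{1})_{1-q/p,p}$ with equivalent norms for $q\leq p<\infty$, and the general case $q_0<p_1<p<p_2\leq\infty$ follows by invoking the Keith--Zhong self-improvement Theorem \ref{kz} (applicable since $(\Omega,|\cdot|,\mu)$ is a complete doubling metric-measure space admitting a Poincar\'e inequality) together with the reiteration theorem exactly as in the proof of Corollary \ref{CIS}.

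I expect no genuine obstacle beyond the bookkeeping: the only subtle point is to make sure that the abstract gradient $D$ from Proposition \ref{H}, used in Section 7 to define $H_p^{1}$, is identified with the usual weak gradient $\nabla$ used in Definition \ref{DH}, which holds because $\mu$-a.e.\ uniqueness of the gradient on $H_p^{1}(\Omega,\mu)$ for $p>q_0$ is guaranteed by the $(P_p)$ inequality obtained from $(P_q)$ via Theorem \ref{kz}.
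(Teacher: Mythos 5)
Your reduction is essentially what the paper has in mind: identify, for smooth (or locally Lipschitz) $u$, the pointwise Lipschitz constant $\Lip u$ with $|\nabla u|$ in the Euclidean metric, so that the weighted Sobolev norm of Definition \ref{DH} is equivalent to the metric-measure norm $\|u\|_p+\|\,|Du|\,\|_p$ of Section \ref{ESM}; observe that $q$-admissibility is exactly doubling plus $(P_q)$; transfer the Calder\'on--Zygmund decomposition verbatim and get the $K$-functional characterization; and then use reiteration together with the Keith--Zhong improvement. For $\Omega=\mathbb{R}^n$ this is clean: $(\mathbb{R}^n,|\cdot|,\mu)$ is a complete doubling space with $(P_q)$, so Theorem \ref{IHM} applies directly and the local-hypotheses remark is not needed at all.

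The one place you go wrong is the handling of a proper open subset $\Omega\subsetneq\mathbb{R}^n$. You write that "the hypotheses on $\mu$ are local in nature" and invoke the local-assumptions remark at the end of Section 7, justifying it by the observation that small Euclidean balls are length spaces. That does not address the actual obstructions: for a general open $\Omega$, the metric space $(\Omega,|\cdot|)$ is not complete, and the Poincar\'e inequality furnished by $q$-admissibility is for $\mathbb{R}^n$-balls, not for the restricted space; without extra geometric assumptions on $\Omega$ (e.g.\ extension-domain or quasi-convexity hypotheses) one cannot conclude that $(\Omega,|\cdot|,\mu)$ satisfies even local doubling and a local Poincar\'e inequality in the sense required by that remark, which in any case still assumes completeness of the underlying space. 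To be fair, the paper itself states Theorem \ref{ISw} without confronting this, so you are matching its level of precision; but the argument you give for $\Omega\subsetneq\mathbb{R}^n$ is not a correct fix. A secondary slip: Keith--Zhong supplies $(P_{q-\epsilon})$, not $(P_p)$ for $p>q_0$ directly; you also need the trivial monotonicity $(P_q)\Rightarrow(P_p)$ for $p\geq q$ to cover the full range, as in the paper's Corollary \ref{CIS}.
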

As in section 7, we were not able to get our interpolation result for $p_{1}\leq q_{0}$ since again in this case the uniqueness of the gradient does not hold for $p_{1}\leq q_{0}$. 
\begin{rem} Equip $\Omega$ with the Carnot-Carath\'{e}odory distance associated to a family of vector fields with real locally Lipschitz continuous coefficients instead of the Euclidean distance. Under the same hypotheses used in the beginning of this section, just replacing the balls $B$ by the balls $\tilde{B}$ with respect to $\rho$, and $\nabla$ by $X$ and assuming that $id:(\Omega,\rho)\rightarrow (\Omega,|.|)$ is an homeomorphism, we obtain our interpolation result. As an example we take vectors fields satisfying a H\"{o}rmander condition or vectors fields of Grushin type \cite{franchi5}.
 \end{rem}
\subsection{Lie Groups}
 In all this subsection, we consider $G$ a connected unimodular Lie group equipped with a Haar measure $d\mu$ and a family of left invariant vector fields $X_{1},...,X_{k}$ such that the $X_{i}$'s satisfy a H\"{o}rmander condition. In this case the Carnot-Carath\'{e}odory metric $\rho$ is is a distance, and $G$ equipped with the distance  $\rho$ is complete and defines the same topology as that of $G$ as a manifold (see \cite{coulhon8} page 1148). 
From the results in \cite{guivarch}, \cite{nagel}, it is known that $G$ satisfies $(D_{loc})$. Moreover, if $G$ has polynomial growth it satisfies $(D)$.  From the results in \cite{saloff2}, \cite{varopoulos2}, $G$ admits a local Poincar\'e inequality $(P_{1loc})$. If $G$ has polynomial growth, then it admits a global Poincar\'{e} inequality $(P_{1})$.
 \\
 
 \noindent \paragraph{\textbf{Interpolation of non-homogeneous Sobolev spaces.}} We define the non-ho\-mo\-ge\-neous Sobolev spaces on a Lie group $W_{p}^{1}$ in the same manner as in section 3 on a Riemannian manifold replacing $\nabla$ by $X$ (see Definition \ref{DNH} and Proposition \ref{CDW}). 
 \\ 

To interpolate  the $W_{p_{i}}^{1}$, we distinguish between the polynomial and the exponential growth cases. If $G$ has polynomial growth, then we are in the global case. If $G$ has exponential growth, we are in the local case. In the two cases we obtain the following theorem.
 \begin{thm} Let $G$ be as above. Then, for all $1\leq p_{1}<p<p_{2}\leq \infty,\;W_{p}^{1}$ is an interpolation space between $W_{p_{1}}^{1}$ and $W_{p_{2}}^{1},\,(q_{0}=1$ here). Therefore, we get all the interpolation theorems of section \ref{TI}.
\end{thm}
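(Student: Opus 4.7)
The plan is to reduce this to the interpolation machinery already developed for metric-measure and Carnot-Carath\'eodory spaces, since the group setting fits into that framework once one reads off the correct doubling and Poincar\'e hypotheses. Write $d\mu$ for the Haar measure and $X$ for the sub-Riemannian gradient built from $X_1,\dots,X_k$; the Carnot-Carath\'eodory distance $\rho$ makes $(G,\rho,\mu)$ a complete length space. By the cited results, $(D_{loc})$ holds on every $G$, $(P_{1loc})$ always holds, $(D)$ holds when $G$ has polynomial growth, and in that case $(P_1)$ holds as well. In particular $q_0=1$, so for any $1\le p_1<p<p_2\le\infty$ one has $p>q_0$, which is exactly the hypothesis needed to invoke the reiteration argument of Corollary \ref{CIS}.

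I would then split the argument according to growth. If $G$ has polynomial growth, the global hypotheses $(D)$ and $(P_1)$ are exactly those used in Section 3. The Calder\'on-Zygmund decomposition (Proposition \ref{CZ}) was proved for complete non-compact metric-measure spaces satisfying $(D)$ and $(P_q)$, and the proof transfers verbatim with $\nabla$ replaced by $X$, since the only structural facts used are doubling, the Poincar\'e inequality, the existence of a Whitney covering of an open proper subset, and a Lipschitz partition of unity subordinated to it; all of these are available in $(G,\rho,\mu)$. This yields the upper and lower bounds on $K(f,t^{1/r},W_r^1,W_\infty^1)$ as in Theorem \ref{EK} with $q=1$, hence $W_p^1=(W_1^1,W_\infty^1)_{1-1/p,p}$ for $1<p<\infty$, and applying the reiteration theorem as in the proof of Corollary \ref{CIS} (with $q_0=1$) gives $W_p^1=(W_{p_1}^1,W_{p_2}^1)_{\theta,p}$ for all $1\le p_1<p<p_2\le\infty$.

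If $G$ has exponential growth, we are in the local case. The arguments of the local part of Section 3 together with the Lemma \ref{DB} / Remark \ref{RB} on relative doubling on balls $B^j$ apply, because $(G,\rho)$ is a length space, the balls of radius at most $\min(r_0,r_1)$ are $\lambda$-quasi-convex with $\lambda$ uniform in the centre, and we dispose of $(D_{loc})$ and $(P_{1loc})$. Running the Calder\'on-Zygmund decomposition locally on each patch $B^j$ and gluing via the partition of unity $(\varphi_j)$, as in the proof of the local case of Theorem \ref{EK}, gives the upper bound on $K$ with $q=1$. Combined with the trivial lower bound and the reiteration theorem, this yields Corollary \ref{CIS} verbatim in the present setting, so $W_p^1=(W_{p_1}^1,W_{p_2}^1)_{\theta,p}$ with $\tfrac1p=\tfrac{1-\theta}{p_1}+\tfrac{\theta}{p_2}$.

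The only non-routine point is checking that the Calder\'on-Zygmund decomposition of Proposition \ref{CZ} genuinely transposes from the Riemannian gradient to the Hörmander gradient $X$; the main thing to verify is the product/cutoff estimate $|X(\chi_i(f-f_{B_i}))|\le \chi_i|Xf|+|f-f_{B_i}|\,|X\chi_i|$ with $\|\,|X\chi_i|\,\|_\infty\le C/r_i$, which follows from the Leibniz rule for $X$ and the fact that the cutoffs $\chi_i$ built from $\psi(C_1\rho(x_i,\cdot)/r_i)$ are Lipschitz for $\rho$ (hence have $|X\chi_i|$ bounded by the $\rho$-Lipschitz constant). Once this is in place, every remaining step, including the Keith--Zhong self-improvement (which here is vacuous since $(P_{1loc})$ already sits at the left endpoint), is a verbatim copy of the Riemannian or metric-measure proofs.
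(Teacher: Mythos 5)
Your proposal matches the paper's route exactly: the paper likewise splits into polynomial-growth (global, $(D)$ and $(P_1)$, Section 3 machinery) and exponential-growth (local, $(D_{loc})$ and $(P_{1loc})$, local $K$-functional estimate) cases and then invokes the reiteration argument of Corollary \ref{CIS} with $q_0=1$. Your added verification that the Calder\'on--Zygmund step transposes to the H\"ormander gradient via the Leibniz rule and $\rho$-Lipschitz cutoffs is precisely the detail the paper leaves implicit when it says to define $W^1_p$ by "replacing $\nabla$ by $X$."
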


\noindent\paragraph{\textbf{Interpolation of homogeneous Sobolev spaces.}}
Let $G$ be a connected Lie group as before. We define the homogeneous Sobolev space $\dot{W}_{p}^{1} $ in the same manner as in section 5 on Riemannian manifolds replacing $\nabla$ by $X$.
\\

For these spaces we have the following interpolation theorem.
\begin{thm}  Let $G$ be as above and assume that $G$ has polynomial growth. Then for $1\leq p_{1}<p<p_{2}\leq \infty$, $ \dot{W}_{p}^{1}$ is an interpolation space between $\dot{W}_{p_{1}}^{1}$ and $\dot{W}_{p_{2}}^{1}$.
 \end{thm}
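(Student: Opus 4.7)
My plan is to carry the arguments of Section 5 over from the Riemannian setting to the sub-Riemannian one on $G$, replacing the Riemannian gradient $\nabla$ everywhere by the horizontal gradient $X=(X_1,\ldots,X_k)$. Since $G$ has polynomial growth, the results of \cite{guivarch}, \cite{nagel} and \cite{saloff2}, \cite{varopoulos2} cited in the paragraph preceding the theorem give the global doubling property $(D)$ and the global Poincar\'e inequality $(P_1)$ on $(G,\rho,\mu)$. In particular $q_0=1$, which is the smallest possible exponent, so nothing is lost at the endpoint $p_1=1$.

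I would then run the Calder\'on--Zygmund decomposition of Proposition \ref{CZH} verbatim, with $X$ in place of $\nabla$ and $q=1$. Its proof only uses global doubling, $(P_q)$, the existence of a Whitney covering in a doubling metric space, and the linearity together with the Leibniz rule of the derivation, all of which are available on $G$ (the CC space $(G,\rho)$ is a length space, so one can also quote Remark \ref{RB} for the auxiliary ball covering). Taking $\alpha(t)=(\mathcal M(|Xf|))^{*}(t)$, this yields the decomposition $f=g+\sum_i b_i$ with the usual bounds and, as in Theorem \ref{EKH}, the homogeneous $K$-functional estimate
\[
K(F,t,\dot W_1^1,\dot W_\infty^1)\sim t\,|Xf|^{**}(t),\qquad \overline{f}=F,
\]
valid for every $F\in \dot W_1^1+\dot W_\infty^1$. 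The matching lower bound is trivial, since the map $\overline{f}\mapsto Xf$ is bounded from $\dot W_p^1$ into $L_p(G,\mu;\mathbb{R}^k)$ for every $1\leq p\leq\infty$.

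From this characterization, the argument of Theorem \ref{IHS} gives $\dot W_p^1=(\dot W_1^1,\dot W_\infty^1)_{1-1/p,p}$ for every $1<p<\infty$, and the reiteration theorem (\cite{bennett}, Ch.~5, Thm.~2.4), applied exactly as in the proof of Corollary \ref{CIS} and of Corollary \ref{RH} (splitting into the subcases $p_1>1$ and $p_1=1$), promotes this to $\dot W_p^1=(\dot W_{p_1}^1,\dot W_{p_2}^1)_{\theta,p}$ for all $1\leq p_1<p<p_2\leq\infty$ with $\tfrac1p=\tfrac{1-\theta}{p_1}+\tfrac{\theta}{p_2}$. The only step that requires any care is the homogeneous analogue of Proposition \ref{DBL}(2), namely density of $\Lip\cap\dot W_p^1$ in $\dot W_p^1$ so that the Calder\'on--Zygmund decomposition can be invoked on a dense class; but since $(G,\rho)$ is a complete length space and $(P_1)$ holds globally, the argument from \cite{franchi1} quoted in Proposition \ref{DBL} applies without change. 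I expect this density/uniqueness-of-gradient point to be the only genuine thing to verify; everything else is a direct translation of Section 5 via the dictionary $\nabla\leftrightarrow X$.
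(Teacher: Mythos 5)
Your proposal is correct and follows essentially the same route as the paper: since $G$ has polynomial growth it satisfies global $(D)$ and $(P_1)$, so $q_0=1$, and Section 5 carries over verbatim with $X$ in place of $\nabla$, after which Corollary \ref{RH} (reiteration) gives the full range $1\leq p_1<p<p_2\leq\infty$. One small precision: the paper's Theorem \ref{EKH} is deliberately stated in a weaker form than you claim --- the upper bound on $K(F,t,\dot W_1^1,\dot W_\infty^1)$ is established only for $F\in\dot W_p^1$ with $1\leq p<\infty$, not for arbitrary $F\in\dot W_1^1+\dot W_\infty^1$ --- but since this one-sided bound, together with the trivial lower bound, is exactly what is needed to prove $\dot W_p^1=(\dot W_1^1,\dot W_\infty^1)_{1-1/p,\,p}$, your argument goes through unchanged.
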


\section{Appendix} 
In view of the hypotheses in the previous interpolation results, a naturel question to address is whether the properties $(D)$ and $(P_{q})$ are necessary. The aim of the appendix is to exhibit an example where at least Poincar\'e is not needed.
Consider 
$$X=\left\lbrace(x_{1},x_{2},...,x_{n})\in \mathbb{R}^{n};\, x_{1}^{2}+...+x_{n-1}^{2}\leq x_{n}^{2} \right\rbrace
$$
equipped with the Euclidean metric of $\mathbb{R}^{n}$ and with the Lebesgue measure. $X$ consists of two infinite closed cones with a common vertex. $X$ satisfies the doubling property and admits $(P_{q})$ in the sense of metric-measure spaces if and only if $q>n$ (\cite{hajlasz4} p.17). Denote by $\Omega$ the interior of $X$. Let $H_{p}^{1}(X)$ be the closure of $\Lip_{0}(X)$ for the norm
$$
\|f\|_{H_{p}^{1}(X)}=\|f\|_{L_{p}(\Omega)}+\|\,|\nabla f|\,\|_{L_{p}(\Omega)}.
$$
We define $W_{p}^{1}(\Omega)$ as the set of all functions $f\in L_{p}(\Omega)$ such that $\nabla f \in L_{p}(\Omega)$ and equip this space with the norm
$$
\|f\|_{ W_{p}^{1}(\Omega)}=\|f\|_{H_{p}^{1}(X)}.
$$
The gradient is always defined in the distributional sense on $\Omega$.

Using our method, it is easy to check that the $W_{p}^{1}(\Omega)$ interpolate for all $1\leq p\leq \infty$. Also our interpolation result asserts that $H_{p}^{1}(X)$ is an interpolation space between $H_{p_{1}}^{1}(X)$ and $H_{p_{2}}^{1}(X)$ for $1\leq p_{1}<p<p_{2}\leq \infty$ with
 $p>n$. It can be shown that $H_{p}^{1}(X)\subsetneq W_{p}^{1}(\Omega)$ for $p>n$ and $H_{p}^{1}(X)= W_{p}^{1}(\Omega)$ for $1\leq p<n$. Hence $H_{p}^{1}(X)$ is an interpolation space between $H_{p_{1}}^{1}(X)$ and $H_{p_{2}}^{1}(X)$ for $1\leq p_{1}<p<p_{2}<n$ although the Poincar\'{e} inequality does not hold on $X$ for those $p$. 
However, we do not know if the $H_{p}^{1}$ interpolate for all $1\leq p\leq \infty$ (see \cite{badr}, Chapter 4 for more details).
\\

\nocite{mazya}
\bibliographystyle{plain}
\bibliography{latex2}

\begin{thebibliography}{10}

\bibitem{ambrosio1}
L.~Ambrosio, M.~Miranda~Jr, and D.~Pallara.
\newblock Special functions of bounded variation in doubling metric measure
  spaces.
\newblock {\em Calculus of variations: topics from the mathematical heritage of
  E. De Giorgi, Quad. Mat., Dept. Math, Seconda Univ. Napoli, Caserta},
  14:1--45, 2004.

\bibitem{aubin1}
T.~Aubin.
\newblock Espaces de {S}obolev sur les vari\'{e}t\'{e}s {R}iemanniennes.
\newblock {\em Bull. Sci. Math. 2}, 100(2):149--173, 1976.

\bibitem{auscher1}
P.~Auscher and T.~Coulhon.
\newblock Riesz transform on manifolds and {P}oincar\'{e} inequalities.
\newblock {\em Ann. Scuola Norm. Sup. Pisa Cl.Sci(5)}, 4(3):531--555, 2005.

\bibitem{auscher2}
P.~Auscher, T.~Coulhon, X.T. Duong, and S.~Hofmann.
\newblock Riesz transform on manifolds and heat kernel regularity.
\newblock {\em Ann. Sci. Ecole Norm. Sup.}, 37:911--957, 2004.

\bibitem{badr}
N.~Badr.
\newblock {\em Ph.D Thesis}.
\newblock Universit\'e Paris-Sud 11, 2007.

\bibitem{bennett}
C.~Bennett and R.~Sharpley.
\newblock {\em Interpolations of operators}.
\newblock Academic Press, 1988.

\bibitem{bergh}
J.~Bergh and J.~L\"{o}fstr\"{o}m.
\newblock {\em Interpolations spaces, {A}n introduction}.
\newblock Springer (Berlin), 1976.

\bibitem{calderon2}
A.~P. Calder\'{o}n.
\newblock Spaces between ${L}^{1}$ and ${L}^{\infty}$ and the theorem of
  {M}arcinkiewicz.
\newblock {\em Studia Math.}, 26:273--299, 1966.

\bibitem{calderon1}
C.~P. Calder\'{o}n and M.~Milman.
\newblock Interpolation of {S}obolev {S}paces. {T}he {R}eal {M}ethod.
\newblock {\em Indiana. Univ. Math. J.}, 32(6):801--808, 1983.

\bibitem{cheeger1}
J.~Cheeger.
\newblock Differentiability of {L}ipschitz functions on metric measure spaces.
\newblock {\em Geom. Funct. Anal.}, 9:428--517, 1999.

\bibitem{coifman2}
R.~Coifman and G.~Weiss.
\newblock {\em Analyse harmonique sur certains espaces homog\`{e}nes}.
\newblock Lecture notes in Math., Springer, 1971.

\bibitem{coifman1}
R.~Coifman and G.~Weiss.
\newblock Extensions of {H}ardy spaces and their use in analysis.
\newblock {\em Bull. Amer. Math. Soc.}, 83:569--645, 1977.

\bibitem{coulhon8}
T.~Coulhon, I.~Holopainen, and L.~Saloff~Coste.
\newblock Harnack inequality and hyperbolicity for the subelliptic $p$
  {L}aplacians with applications to {P}icard type theorems.
\newblock {\em Geom. Funct. Anal.}, 11(6):1139--1191, 2001.

\bibitem{devore1}
R.~Devore and K.~Scherer.
\newblock Interpolation of linear operators on {S}obolev spaces.
\newblock {\em Ann. of Math.}, 109:583--599, 1979.

\bibitem{franchi3}
B.~Franchi.
\newblock Weighted {S}obolev-{P}oincar\'{e} inequalities and pointwise
  estimates for a class of degenerate elliptic equations.
\newblock {\em Trans. Amer. Math. Soc}, 327(1):125--158, 1991.

\bibitem{franchi5}
B.~Franchi, C.E. Guti\'{e}rrez, and R.L. Wheeden.
\newblock Weighted {S}obolev-{P}oincar\'{e} inequalities for {G}rushin type
  operators.
\newblock {\em Com. Partial Differential Equations}, 19:523--604, 1994.

\bibitem{franchi1}
B.~Franchi, P.~Hajlasz, and P.~Koskela.
\newblock Definitions of {S}obolev classes on metric spaces.
\newblock {\em Ann. Inst. Fourier (Grenoble)}, 49:1903--1924, 1999.

\bibitem{franchi7}
B.~Franchi, F.~Serrapioni, and F.~Serra~Cassano.
\newblock Approximation and imbedding theorems for weighted {S}obolev {S}paces
  associated with {L}ipschitz continuous vector fields.
\newblock {\em Boll. Un. Math. Ital.}, B(7):83--117, 1997.

\bibitem{garofalo}
N.~Garofalo and D.~M. Nhieu.
\newblock Isoperimetric and {S}obolev {I}nequalities for
  {C}arnot-{C}arathéodory spaces and the existence of minimal surfaces.
\newblock {\em Comm. Pure Appl. Math.}, 49:1081--1144, 1996.

\bibitem{goldshtein}
V.~Gol'dshtein and M.~Troyanov.
\newblock Axiomatic {T}heory of {S}obolev {S}paces.
\newblock {\em Expo. Mathe.}, 19:289--336, 2001.

\bibitem{guivarch}
Y.~Guivarc'h.
\newblock Croissance polynomiale et p\'{e}riode des fonctions harmoniques.
\newblock {\em Bull. Soc. Math. France}, 101:149--152, 1973.

\bibitem{hajlasz3}
P.~Hajlasz.
\newblock Sobolev spaces on metric measure spaces (heat kernels and analysis on
  manifolds, graphs, and metric spaces).
\newblock {\em Contemp. Math., Amer. Math. Soc.}, (338):173--218, 2003.

\bibitem{hajlasz4}
P.~Hajlasz and P.~Koskela.
\newblock {S}obolev met {P}oincar\'{e}.
\newblock {\em Mem. Amer. Math. Soc.}, 145(688):1--101, 2000.

\bibitem{heinonen2}
J.~Heinonen.
\newblock {\em Lectures on analysis on metric spaces}.
\newblock Springer-Verlag, 2001.

\bibitem{heinonen3}
J.~Heinonen.
\newblock {\em Non smooth calculus}.
\newblock Memoirs of A.M.S., 2007.

\bibitem{heinonen5}
J.~Heinonen and P.~Koskela.
\newblock Quasiconformal maps in metric spaces with controlled geometry.
\newblock {\em Acta Math.}, 181:1--61, 1998.

\bibitem{keith2}
S.~Keith and K.~Rajala.
\newblock A remark on {P}oincar\'{e} inequality on metric spaces.
\newblock {\em Math. Scand.}, 95(2):299--304, 2004.

\bibitem{keith3}
S.~Keith and X.~Zhong.
\newblock The {P}oincar\'{e} inequality is an open ended condition.
\newblock {\em To appear in Ann. of Math.}

\bibitem{kilpelainen}
T.~Kilpel\"{a}inen.
\newblock Smooth approximation in {W}eighted {S}obolev spaces.
\newblock {\em Comment. Math. Univ. Carolinae}, 38(1):29--35, 1997.

\bibitem{martin}
J.~Mart\'{i}n and M.~Milman.
\newblock Sharp {G}agliardo-{N}irenberg inequalities via symmetrization.
\newblock {\em Math. Res. Lett.}, 14(1):49--62, 2007.

\bibitem{mazya}
V.~G. Maz'ya.
\newblock {\em {S}obolev spaces}.
\newblock Springer (Berlin), 1985.

\bibitem{nagel}
A.~Nagel, E.~M. Stein, and S.~Wainger.
\newblock Balls and metrics defined by vector fields.
\newblock {\em Acta Math.}, 155:103--147, 1985.

\bibitem{saloff2}
L.~Saloff-Coste.
\newblock Parabolic {H}arnack inequality for divergence form second order
  differential operator.
\newblock {\em Potential Anal.}, 4(4):429--467, 1995.

\bibitem{stein3}
E.~M. Stein and G.~Weiss.
\newblock {\em Introduction to {F}ourier {A}nalysis in {E}uclidean spaces}.
\newblock Princeton University Press, 1971.

\bibitem{varopoulos2}
N.~Varopoulos.
\newblock Fonctions harmoniques sur les groupes de {L}ie.
\newblock {\em C. R. Acad. Sc. Paris, Ser. I}, 304(17):519--521, 1987.

\end{thebibliography}
\end{document}